\numberwithin{equation}{section}
\theoremstyle{plain}
\newtheorem{proposition}{Proposition}[section]
\newtheorem{theorem}[proposition]{Theorem}
\newtheorem{lemma}[proposition]{Lemma}
\newtheorem{result}[proposition]{Result}
\theoremstyle{definition}
\newtheorem{example}[proposition]{Example}
\newtheorem{definition}[proposition]{Definition}
\theoremstyle{remark}
\newtheorem{remark}[proposition]{Remark}
\DeclareMathOperator{\Real}{Re}
\DeclareMathOperator{\id}{id}
\DeclareMathOperator{\Cc}{\mathcal{C}}
\DeclareMathOperator{\Oc}{\mathcal{O}}
\DeclareMathOperator{\Tc}{\mathcal{T}}
\DeclareMathOperator{\Uc}{\mathcal{U}}
\DeclareMathOperator{\Bb}{\mathbb{B}}
\DeclareMathOperator{\Cb}{\mathbb{C}}
\DeclareMathOperator{\Db}{\mathbb{D}}
\DeclareMathOperator{\Nb}{\mathbb{N}}
\DeclareMathOperator{\Rb}{\mathbb{R}}
\DeclareMathOperator{\Sb}{\mathbb{S}}
\DeclareMathOperator{\Zb}{\mathbb{Z}}
\DeclareMathOperator{\Ks}{{\sf K}}
\newcommand{\abs}[1]{\left|#1\right|}
\newcommand{\norm}[1]{\left\|#1\right\|}
\newcommand{\wt}[1]{\widetilde{#1}}
\newcommand{\Gp}[3]{\left({#1}\!\mid\!{#2}\right)^{\Omega}_{{#3}}} 
\newcommand{\grp}[3]{\left({#1}\!\mid\!{#2}\right)_{{#3}}}
\newcommand{\bcdot}{\boldsymbol{\cdot}}
\newcommand{\blz}{\blacklozenge} 
\newcommand{\nrg}[3]{{#1}\,|\,{#2}\!\frown\!{#3}} 
\newcommand{\glim}{\stackrel{G\,\,}{\rightarrow}}
\newcommand\clos[1]{{\overline{{#1}}}^{\!e}}
\begin{document}

\title[Unbounded visibility domains]{Unbounded visibility domains, \\ the end compactification, and applications}
\author{Gautam Bharali}
  \address{Department of Mathematics, Indian Institute of Science, Bangalore 560012, India}
  \email{bharali@iisc.ac.in}
\author{Andrew Zimmer}
  \address{Department of Mathematics, University of Wisconsin-Madison, Madison, WI 53706, U.S.A.}
  \email{amzimmer2@wisc.edu}
\date{\today}
\keywords{Kobayashi metric, visibility, Gromov hyperbolicity, end compactification, continuous extensions, quasi-isometries, Wolff--Denjoy theorem}
\subjclass[2020]{Primary 32F45, 53C23; Secondary 32Q45, 53C22, 32A40, 32H50}

\begin{abstract}
In this paper we study when the Kobayashi distance on a Kobayashi hyperbolic domain has certain visibility properties, with a focus on unbounded domains. ``Visibility'' in this context is reminiscent of visibility, seen in negatively curved Riemannian manifolds, in the sense of Eberlein--O'Neill. However, we do not assume that the domains studied are Cauchy-complete with respect to the Kobayashi distance, as this is hard to establish for domains in $\Cb^d$, $d \geq 2$. We study the various ways in which this property controls the boundary behavior of holomorphic maps. Among these results is a Carath{\'e}odory-type extension theorem for biholomorphisms between planar domains\,---\,notably: between infinitely-connected domains. We also explore connections between our visibility property and Gromov hyperbolicity of the Kobayashi distance. 
\end{abstract}

\maketitle

\section{Introduction}\label{sec:intro}

In this paper we study when {the Kobayashi distance on a domain has a certain visibility property, how this property controls the boundary behavior of holomorphic maps, and connections between the visibility property and Gromov hyperbolicity of the Kobayashi distance. Informally speaking, the visibility property states that geodesics joining two distinct points on the boundary must bend into the domain (as in the Poincar\'e disk model of the hyperbolic plane). This property has been used extensively, e.g. \cite{CHL1988,M1993,K2005b}, and has been systematically investigated in a number of recent papers, e.g.~\cite{BZ2017,BM2021,CMS2021,BNT2022}.

In earlier work \cite{BZ2017}, we had introduced a new class of domains, the \emph{Goldilocks domains}, proved that they have the visibility property, and then used this property to understand the behavior of holomorphic maps. In a part of this paper, we extend those ideas to unbounded domains. 

We now introduce the definitions needed to state our main results. In particular, we need to recall the definition of the Golidlocks condition and to precisely define the curves and the notion of boundaries used to define the visibility property.

Given a domain $\Omega \subset \Cb^d$, let $k_\Omega : \Omega \times \Cb^d \rightarrow [0,\infty)$ denote the \emph{infinitesimal Kobayashi pseudo-metric}, and let $K_\Omega: \Omega \times \Omega \rightarrow [0,\infty)$ denote the Kobayashi pseudo-distance. We say that $\Omega$ is \emph{Kobayashi hyperbolic} if $K_\Omega$ is an actual distance.

If the metric space $(\Omega, K_{\Omega})$ is Cauchy-complete (for brevity: $\Omega$ is \emph{complete Kobayashi hyperbolic}) then any two points in $\Omega$ are joined by a geodesic (i.e., a curve $\sigma : I \rightarrow \Omega$, where $I$ is an interval, that satisfies $K_\Omega(\sigma(t), \sigma(s)) = \abs{t-s}$ for all $s,t \in I$). However, when $d \geq 2$ it is a very difficult problem to determine if a given domain is complete Kobayashi hyperbolic (even in the pseudoconvex case). Therefore, the domains studied in this paper are \textbf{not} assumed to be complete Kobayashi hyperbolic. Hence, we need to consider a more general class of curves instead of geodesics.

Let $\Omega\subset \Cb^d$ be a domain. For $\lambda \geq 1$ and $\kappa \geq 0$, a map $\sigma : I \rightarrow \Omega$ of an interval $I \subset \Rb$ is called a \emph{$(\lambda, \kappa)$-almost-geodesic} if 
\begin{itemize}
\item $\lambda^{-1}\abs{t-s} - \kappa \leq K_\Omega(\sigma(t), \sigma(s)) \leq \lambda\abs{t-s} + \kappa$ for all $s,t \in I$, and
\item $\sigma$ is absolutely continuous (as a map $I \rightarrow \Cb^d$, whereby $\sigma^\prime(t)$ exists for almost every $t\in I$) and $k_\Omega(\sigma(t); \sigma^\prime(t)) \leq \lambda$ for almost every $t \in I$. 
\end{itemize}
These curves are relevant for the following reason: if $\Omega$ is Kobayashi hyperbolic, then for any $\kappa > 0$, every two points in $\Omega$ are joined by a $(1,\kappa)$-almost-geodesic\,---\,see Proposition~\ref{prop:almost_geod_exist} below. 

Given a domain $\Omega \subset \Cb^d$, let $\overline{\Omega}^{End}$ denote the end compactification of $\overline{\Omega}$. We shall write $\partial \overline{\Omega}^{End} = \overline{\Omega}^{End} \setminus \Omega$. The reader is referred to Section~\ref{sec:end_comp} for the definition of the end compactification. With these notions, we can now formally define the visibility property alluded to above.

\begin{definition}\label{defn:visib_dom}
Let $\Omega\subset \Cb^d$ be a domain (which is not necessarily bounded). We say that $\Omega$ is a \emph{visibility domain with respect to the Kobayashi distance} (or simply a \emph{visibility domain}) if $\Omega$ is Kobayashi hyperbolic and has the following property:
\begin{itemize}
\item[$(*)$] If $\lambda \geq 1$, $\kappa \geq 0$, $\xi, \eta \in \partial\overline{\Omega}^{End}$ are distinct points, and $V_\xi, V_\eta$ are $\overline{\Omega}^{End}$-open neighborhoods of $\xi, \eta$, respectively, whose closures in $\overline{\Omega}^{End}$ are disjoint, then there exists a compact set $K \subset \Omega$ such that for any $(\lambda, \kappa)$-almost-geodesic $\sigma: [0,T] \rightarrow \Omega$ with $\sigma(0) \in V_\xi$ and $\sigma(T) \in V_\eta$, $\sigma([0,T]) \cap K \neq \emptyset$.
\end{itemize}
For $\Omega$ as above, we say that $\Omega$ is a \emph{weak visibility domain} if the property $(*)$ holds true \textbf{only} for $\lambda=1$.
\end{definition}

Observe that a visibility domain in the above sense is a weak visibility domain. Before we turn to more concrete matters, it might be useful to address the following question: what is the significance of having two notions of visibility? Functionally, visibility may be seen as a tool for controlling the oscillation of a map $f: D \rightarrow \Omega$, if $f$ maps into a (weak) visibility domain, along any sequence $(z_n)_{n \geq 1} \subset D$ as $z_n$ approaches a point in $\overline{D}^{End}$. Provided $D$ has reasonably well-behaved geodesics (in a sense that can be made precise), this control facilitates the continuous extension of $f$ to a map between $\overline{D}^{End}$ and $\overline{\Omega}^{End}$. From this perspective, loosely speaking:
\begin{itemize}
\item weak visibility with respect to $K_{\Omega}$ is the property that enables continuous extension, as described above, for isometries with respect to $K_{D}$ and $K_{\Omega}$,
\item visibility with respect to $K_{\Omega}$ is the property that enables continuous extension, as described above, for continuous \textbf{quasi}-isometries with respect to $K_{D}$ and $K_{\Omega}$.
\end{itemize}
  
Gromov hyperbolicity is another framework in which the above extension phenomena are obtained: e.g., see \cite[Section~6]{BB2000}. It turns out that there is a natural relationship between weak visibility and Gromov hyperbolicity of the Kobayashi distance, which we shall investigate and present an application thereof. On the theme of the last paragraph: we shall establish a rather general result on the homeomorphic extension between $\overline{\Omega}_1^{End}$ and $\overline{\Omega}_2^{End}$ of biholomorphisms between planar domains $\Omega_1$ and $\Omega_2$, the Poincar{\'e} distances on which need not be Gromov hyperbolic. Its proof relies crucially on visibility in the sense of Definition~\ref{defn:visib_dom}. In fact, a portion of this paper is devoted to planar domains, Gromov hyperbolicity of the Poincar{\'e} distance (or the failure thereof) on these domains, etc.

\subsection{The local Goldilocks property}
Given the above discussion, it would be useful to have sufficient conditions for a domain to be a visibility domain. The Goldilocks property referred to above is sufficient for a bounded domain to be a visibility domain. We begin with notation needed to extend these ideas to unbounded domains. Let $\Omega \subset \Cb^d$ be a domain. Given a subset $U \subset \overline{\Omega}$, we define 
 
\begin{align*}
M_{\Omega,U}(r) := \sup\left\{ \frac{1}{k_{\Omega}(z;v)} : z \in \Omega \cap U, d_{{\rm Euc}}(z,\partial \Omega) \leq r, \norm{v}=1\right\}
\end{align*}
to measure the growth of the Kobayashi pseudo-metric as one approaches $\partial\Omega$ through $\Omega 
\cap U$. In \cite{BZ2017}, we used the asymptotic behavior of the Kobayashi distance and metric to define the following class of domains (in what follows, we will abbreviate $d_{\rm Euc}(z,\partial \Omega)$ as $\delta_{\Omega}(z)$). 

\begin{definition} A bounded domain $\Omega \subset \Cb^d$ is a \emph{Goldilocks domain} if 
\begin{enumerate}
\item for some (hence any) $\epsilon >0$ we have
\begin{align*}
\int_0^\epsilon \frac{1}{r} M_{\Omega,\Omega}\left(r\right) dr < \infty,
\end{align*} 
\item for each $z_0 \in \Omega$ there exist constants $C, \alpha > 0$ such that 
\begin{align*}
K_\Omega(z_0, z) \leq C + \alpha \log \frac{1}{\delta_\Omega(z)}
\end{align*}
for all $z \in \Omega$. 
 \end{enumerate}
 \end{definition} 
 
The second condition can be viewed as a type of regularity condition on the boundary and holds, for instance, if $\partial \Omega$ is $\Cc^{0,1}$-smooth }(which can be inferred from \cite[Lemma~2.3]{BZ2017}). The first condition can be viewed as a uniform obstruction to analytic varieties in the boundary: for instance if $\partial \Omega$ is reasonably regular and there exists a non-constant holomorphic map $\Db \rightarrow \partial \Omega$, then one can show that  $\liminf_{r \searrow 0} M_{\Omega, \Omega}(r) > 0$ and hence the first condition fails. Here, for $\partial\Omega$ to be ``reasonably regular'', it sufficies for $\Omega$ to be a $\Cc^{0}$-domain (see Definition~\ref{def:Lip_domain}) and the last observation follows, essentially, from the argument at the beginning of the proof of Theorem~\ref{thm:Gromov_visi_bdy} and from the estimate \eqref{eqn:deriv_1}.  

To study the visibility property in unbounded domains, we shall use the following localized version of the Goldilocks conditions. This idea was introduced in \cite{CMS2021}, although the domains considered in \cite{CMS2021} in this context are still bounded domains. When considering unbounded domains, certain fundamental difficulties arise, which must be managed when proving the main result of this section (also see Remark~\ref{rem:CMS}).
 
\begin{definition}\label{defn:local_GL}
Let $\Omega \subset \Cb^d$ be a domain. A boundary point $x \in \partial \Omega$ is a \emph{local Golidlocks point} if there exists a neighborhood $U$ of $x$ in $\overline{\Omega}$ such that
\begin{enumerate}
\item\label{item:metric} for some (hence any) $\epsilon >0$ we have
\begin{align*}
\int_0^\epsilon \frac{1}{r} M_{\Omega,U}\left(r\right) dr < \infty,
\end{align*} 
\item\label{item:dist} for each $z_0 \in \Omega$ there exist constants $C, \alpha > 0$ (which depend on $z_0$ and $U$) such that 
\begin{align*}
K_\Omega(z_0, z) \leq C + \alpha \log \frac{1}{\delta_\Omega(z)}
\end{align*}
for all $z \in \Omega \cap U$. 
\end{enumerate}
Let $\partial_{{ \rm lg}} \Omega \subset \partial \Omega$ denote the set of local Golidlocks points. We say that \emph{$\Omega$ is locally a Goldilocks domain} if $\partial_{{ \rm lg}} \Omega = \partial \Omega$
\end{definition}

With these definitions, we are ready to state the main result of this section. 

\begin{theorem}\label{thm:visible}
Let $\Omega \subset \Cb^d$ be a Kobayashi hyperbolic domain. Suppose the set $\partial\Omega \setminus \partial_{{\rm lg}}\Omega$ is totally disconnected. Then, $\Omega$ is a visibility domain with respect to the Kobayashi distance.
\end{theorem}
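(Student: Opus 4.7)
The plan is to argue by contradiction. Suppose $(*)$ fails for some $\lambda \geq 1$, $\kappa \geq 0$, distinct $\xi,\eta \in \partial\overline{\Omega}^{End}$, and neighborhoods $V_\xi, V_\eta$ with disjoint $\overline{\Omega}^{End}$-closures. This produces $(\lambda,\kappa)$-almost-geodesics $\sigma_n \colon [0,T_n] \to \Omega$ with $\sigma_n(0) \in V_\xi$, $\sigma_n(T_n) \in V_\eta$ whose images eventually leave every compact subset of $\Omega$. After shrinking $V_\xi, V_\eta$, reparameterizing, and passing to a subsequence (using compactness of $\overline{\Omega}^{End}$), one can arrange that $\sigma_n(0) \to \xi^{*} \in \partial\overline{\Omega}^{End}$ and $\sigma_n(T_n) \to \eta^{*} \in \partial\overline{\Omega}^{End}$ with $\xi^{*} \neq \eta^{*}$.

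The principal technical step I would establish is a \emph{local visibility lemma} at each $p \in \partial_{\rm lg}\Omega$: if $U$ is a local Goldilocks neighborhood of $p$, $W$ is an $\overline{\Omega}^{End}$-open neighborhood of $p$ with $\overline{W} \subset U$, and $F \subset \overline{\Omega}^{End}$ is closed and disjoint from $\overline{W}$, then there exists a compact $K = K(W, F, \lambda, \kappa) \subset \Omega$ so that any $(\lambda,\kappa)$-almost-geodesic $\tau \colon [a,b] \to \Omega$ with $\tau(a) \in \Omega \cap W$ and $\tau(b) \in \Omega \cap F$ satisfies $\tau([a,b]) \cap K \neq \emptyset$. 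The proof is a localized version of the Goldilocks visibility argument of \cite{BZ2017}: condition~\ref{item:metric} yields $\|\tau'(t)\| \leq \lambda\, M_{\Omega,U}(\delta_\Omega(\tau(t)))$ almost everywhere while $\tau(t) \in U$; the integrability of $M_{\Omega,U}(r)/r$ then bounds the Euclidean length of any portion of $\tau$ lying in a thin shell $\{\delta_\Omega < r\} \cap U$; and the logarithmic bound~\ref{item:dist} on $K_\Omega$, combined with the $(\lambda,\kappa)$-almost-geodesic inequality, delivers a contradiction when $K$ is taken large enough. This lemma immediately handles the case $\xi^{*}, \eta^{*} \in \partial_{\rm lg}\Omega$ by applying it near each of these points.

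The main obstacle is the case $\xi^{*} \in \partial\Omega \setminus \partial_{\rm lg}\Omega$ (or $\eta^{*}$ symmetrically, or either being an end at infinity). Here the total-disconnectedness hypothesis enters through the claim that every $p \in \overline{\Omega}^{End}$ admits arbitrarily small $\overline{\Omega}^{End}$-open neighborhoods $W$ with $\partial W \cap \partial\Omega \subset \partial_{\rm lg}\Omega$. Because $\partial\Omega \setminus \partial_{\rm lg}\Omega$ is totally disconnected, hence zero-dimensional as a subset of $\partial\Omega$, a generic perturbation of a neighborhood basis of $p$ (e.g.\ Euclidean balls of varying radii, intersected with $\overline{\Omega}^{End}$) produces neighborhoods whose frontiers avoid this zero-dimensional set. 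Given such a $W_\xi$ at $\xi^{*}$, each $\sigma_n$ has a first exit time $s_n \in (0, T_n)$ with $\sigma_n(s_n) \in \Omega \cap \partial W_\xi$; the escape-compacts property forces, along a subsequence, $\sigma_n(s_n) \to q \in \partial W_\xi \cap \partial_{\rm lg}\Omega$. The local visibility lemma applied at $q$ then yields the desired compact subset of $\Omega$, and we obtain a contradiction. The step I expect to be hardest is precisely this topological construction of $W_\xi$, since both the geometry of $\partial\Omega$ and the structure of the ends at infinity in $\overline{\Omega}^{End}$ must be respected simultaneously; this is the essential novelty over the bounded-domain analogue of \cite{CMS2021}.
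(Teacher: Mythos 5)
Your overall strategy (contradiction, a localized Goldilocks/visibility estimate, and total disconnectedness forcing the curves to pass near a local Goldilocks point) is the right one and is parallel in spirit to the paper's. The localized estimate you describe is essentially Lemmas~\ref{lem:nonconst} and~\ref{lem:ae_bound} combined with Proposition~\ref{prop:Lipschitz}, and that part is fine. The genuine gap is in the step you yourself flag as hardest: the construction of a neighborhood $W$ of $\xi^{*}$ with $\partial W \cap \partial\Omega \subset \partial_{\rm lg}\Omega$. The mechanism you propose --- ``a generic perturbation of a neighborhood basis (e.g.\ Euclidean balls of varying radii) produces neighborhoods whose frontiers avoid this zero-dimensional set'' --- is false for general closed totally disconnected sets. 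For example, let $C$ be the Cantor set, $g : C \to [1,2]$ a continuous surjection, and $X := \{\,g(t)(\cos t, \sin t) : t \in C\,\} \subset \Cb$; then $X$ is compact and homeomorphic to a subset of $C$, hence totally disconnected and zero-dimensional, yet \emph{every} circle $\partial\Bb(0,r)$ with $r \in [1,2]$ meets $X$. So no choice of radius in that range works, and ``genericity'' buys you nothing. (A second, more minor, issue: ``totally disconnected, hence zero-dimensional'' is not a valid implication for arbitrary separable metric spaces; it holds here only because $\partial\Omega\setminus\partial_{\rm lg}\Omega$ is closed in $\Cb^d$, hence locally compact. You should say so.) A correct construction of such a $W$ must go through the clopen structure of the zero-dimensional set --- separate a small relatively clopen piece containing the relevant points from its complement by disjoint open sets of $\Cb^d$ --- and is more delicate still when $\xi^{*}$ is an end, since the frontier of an end-neighborhood is dictated by the exhausting compacta and cannot be freely perturbed.

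The paper sidesteps this construction entirely, and you may want to compare. Instead of cutting the curves at a well-chosen frontier, it passes to the local Hausdorff limit $C \subset \partial\Omega$ of the images $\sigma_n([0,T_n])$, proves that the closure $\clos{C}$ of $C$ in $\overline{\Omega}^{End}$ is connected (Lemma~\ref{lem:C_connected}) and contains more than one point (because $\clos{V_\xi}$ and $\clos{V_\eta}$ are disjoint), and proves separately (Lemma~\ref{lem:X_disconnected}, which is exactly the clopen-separation argument) that the closure in $\overline{\Omega}^{End}$ of $\partial\Omega\setminus\partial_{\rm lg}\Omega$ is totally disconnected. A connected set with at least two points cannot sit inside a totally disconnected set, so $C$ must contain a local Goldilocks point $x^0$, and one then runs the localized Goldilocks argument on segments of the $\sigma_n$ that stay in a Goldilocks neighborhood of $x^0$ and have Euclidean-separated endpoints (producing a limit curve that is simultaneously non-constant and constant). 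If you replace your ``good frontier'' step by this connectedness-versus-total-disconnectedness argument --- or by an honest clopen-separation construction of $W$ --- your proof closes up; as written, it does not.
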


\begin{remark}\label{rem:CMS}
Theorem~\ref{thm:visible} is similar in spirit to \cite[Theorem 1.9]{CMS2021}, which states that if a local property similar to that in Definition~\ref{defn:local_GL} holds around points outside a sufficiently small set $S\varsubsetneq \partial\Omega$, then $\Omega$ is a visibility domain. That said:
\begin{enumerate}
\item Only bounded domains are considered in \cite[Theorem 1.9]{CMS2021}. Moreover, the above-mentioned $S$ is such that the case when $\partial\Omega \setminus \partial_{{\rm lg}}\Omega$ is, say, a Cantor set in $\partial\Omega$ is not covered by \cite[Theorem 1.9]{CMS2021}.
\item The ``local property similar to$\dots$'' that was alluded to is a localized version of a condition introduced in \cite{BM2021}. The conclusion of Theorem~\ref{thm:visible} can be deduced with the latter property replacing our condition determining the set $\partial_{{\rm lg}}\Omega$. However, since new challenges arise in proving Theorem~\ref{thm:visible} when $\Omega$ is unbounded, we have opted for a hypothesis wherein the ideas used in the proof are the clearest rather than for the most general statement.
\end{enumerate}
\end{remark}   

Given a domain $\Omega\subset \Cb^d$, \cite[Section~2]{BZ2017} presents a variety of geometric conditions that $\partial\Omega$ can satisfy (locally) around a point $x\in \partial\Omega$ for $x$ to be a local Goldilocks point. Here, we present a range of new examples of locally Goldilocks domains in Section~\ref{sec:examples}.

\subsection{Applications to (quasi-)isometric maps}\label{ssec:applications_maps}
The next couple of results substantiate the discussion above on the functional significance of the visibility property. So, these results pertain to continuous\,---\,or even better\,---\,extension of different types of maps into a visibility domain. Deferring the definition of ``well-behaved geodesics'' to Section~\ref{sec:quasi_isom_extn} below, we state the first of our extension theorems. 

\begin{theorem}\label{thm:extensions}
Suppose $\Omega_1 \subset \Cb^{d_1}$, $\Omega_2 \subset \Cb^{d_2}$ are domains where 
\begin{enumerate}
\item $\Omega_1$ has well-behaved geodesics, 
\item $\Omega_2$ is a visibility domain.
\end{enumerate}
If $f : \Omega_1 \rightarrow \Omega_2$ is a continuous quasi-isometric embedding relative to the Kobayashi distances on $\Omega_1$ and $\Omega_2$, then $f$ extends to a continuous map $\overline{\Omega}^{End}_1 \rightarrow \overline{\Omega}^{End}_2$. 
\end{theorem}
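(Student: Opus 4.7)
The plan is to prove a sequential extension criterion: for every $\xi \in \partial\overline{\Omega}_1^{End}$ I will show that the limit $\lim_{z\to\xi,\,z\in\Omega_1} f(z)$ exists in $\overline{\Omega}_2^{End}$, and define the extension $\widehat{f}$ by this limit together with $\widehat{f}|_{\Omega_1} = f$. Continuity of $\widehat{f}$ then follows by reapplying the same limit argument to a sequence $\xi_n \to \xi$ in $\overline{\Omega}_1^{End}$ together with a diagonal choice of points in $\Omega_1$ approaching $\xi_n$.

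To establish existence of the limit at $\xi$, I would argue by contradiction. Suppose there are sequences $(z_n),(w_n) \subset \Omega_1$ with $z_n,w_n \to \xi$ in $\overline{\Omega}_1^{End}$, and $\overline{\Omega}_2^{End}$-limits $f(z_n)\to\eta$ and $f(w_n)\to\eta'$ with $\eta\neq\eta'$ (pass to subsequences using compactness of $\overline{\Omega}_2^{End}$). Pick disjoint $\overline{\Omega}_2^{End}$-open neighborhoods $V_\eta, V_{\eta'}$ of $\eta, \eta'$. Since $\Omega_1$ has well-behaved geodesics, for a fixed $\kappa>0$ I can join $z_n$ to $w_n$ by a $(1,\kappa)$-almost-geodesic $\sigma_n:[0,T_n]\to\Omega_1$. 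The content of ``well-behaved geodesics'' that I will use is that $\sigma_n$ inherits the convergence of its endpoints: for every compact $A\subset\Omega_1$, $\sigma_n([0,T_n])\cap A = \emptyset$ for all $n$ large, or equivalently $\sigma_n(t_n)\to\xi$ in $\overline{\Omega}_1^{End}$ for any choice $t_n\in[0,T_n]$.

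Next, push $\sigma_n$ forward to $\Omega_2$. If $f$ is a $(\lambda,\kappa_0)$-quasi-isometric embedding, the distance estimate in the definition of a $(1,\kappa)$-almost-geodesic transfers to show that $f\circ\sigma_n$ satisfies, for all $s,t$,
\[
\lambda^{-1}|t-s| - (\lambda^{-1}\kappa + \kappa_0) \;\leq\; K_{\Omega_2}\bigl(f\circ\sigma_n(s),\, f\circ\sigma_n(t)\bigr) \;\leq\; \lambda|t-s| + (\lambda\kappa + \kappa_0).
\]
Since $f$ is only continuous, $f\circ\sigma_n$ need not satisfy the infinitesimal clause of Definition~\ref{defn:visib_dom}; I would sidestep this by invoking Proposition~\ref{prop:almost_geod_exist} to replace $f\circ\sigma_n$ with a genuine $(\lambda',\kappa')$-almost-geodesic $\widetilde\sigma_n$ joining $f(z_n)\in V_\eta$ to $f(w_n)\in V_{\eta'}$ (for $n$ large), or by appealing to a continuous-quasi-geodesic variant of visibility available in the paper's framework. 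Visibility of $\Omega_2$ then yields a compact $K\subset\Omega_2$, independent of $n$, that $\widetilde\sigma_n$ must hit; a short comparison argument between $\widetilde\sigma_n$ and $f\circ\sigma_n$ produces $t_n\in[0,T_n]$ with $f(\sigma_n(t_n))$ in some enlarged but still fixed compact set $K'\subset\Omega_2$. Fixing $z_0\in\Omega_1$, the quasi-isometric lower bound gives
\[
K_{\Omega_1}(z_0,\sigma_n(t_n)) \;\leq\; \lambda\bigl(K_{\Omega_2}(f(z_0),f(\sigma_n(t_n))) + \kappa_0\bigr) \;\leq\; C
\]
uniformly in $n$, since $K'$ has finite Kobayashi diameter. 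On the other hand, $\sigma_n(t_n)\to\xi\in\partial\overline{\Omega}_1^{End}$, which (again, as a consequence of the ``well-behaved geodesics'' hypothesis) should force $K_{\Omega_1}(z_0,\sigma_n(t_n))\to\infty$. This is the desired contradiction.

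The main obstacles I anticipate are two. First, reconciling the end-compactification topology with Kobayashi-metric geometry: turning convergence to a point in $\partial\overline{\Omega}_1^{End}$ into divergence of $K_{\Omega_1}$-distance to a fixed base point is not automatic for arbitrary Kobayashi-hyperbolic domains, and is presumably built into the definition of well-behaved geodesics. Second, the infinitesimal clause in the definition of an almost-geodesic fails for $f\circ\sigma_n$ because $f$ is only continuous; the technically delicate step is therefore to replace or approximate $f\circ\sigma_n$ by a bona fide almost-geodesic in $\Omega_2$ with endpoints still in $V_\eta, V_{\eta'}$, so that visibility can be applied to produce the compact set $K$. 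This is the step that I expect will require the most care, and it is where the continuity hypothesis on $f$ (as opposed to a mere Borel-measurable quasi-isometry) is essential.
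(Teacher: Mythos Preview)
Your overall strategy matches the paper's: argue by contradiction, join $z_n$ to $w_n$ by curves in $\Omega_1$, push forward by $f$, use visibility in $\Omega_2$ to trap the image in a fixed compact, and pull back via the quasi-isometry inequality to contradict the well-behaved-geodesics hypothesis. The continuity argument via a diagonal choice is also what the paper does. Two points, however, need correcting.

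First, in $\Omega_1$ you should use \emph{geodesics}, not $(1,\kappa)$-almost-geodesics. The definition of ``well-behaved geodesics'' (Section~\ref{sec:quasi_isom_extn}) includes that $\Omega_1$ is complete Kobayashi hyperbolic, so geodesics exist, and the divergence condition $K_{\Omega_1}(o,\sigma_n)\to\infty$ is stated only for genuine geodesic segments. If you take almost-geodesics instead, the hypothesis gives you nothing. This also dissolves your first ``obstacle'': no conversion between the end-compactification topology and Kobayashi divergence is needed, because well-behaved geodesics hands you $K_{\Omega_1}(o,\sigma_n)\to\infty$ directly, and the contradiction is simply that the quasi-isometry bound forces $\sup_n K_{\Omega_1}(o,\sigma_n)<\infty$.

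Second, your handling of the infinitesimal clause in $\Omega_2$ has a real gap. You correctly note that $f\circ\sigma_n$ is only a $(\lambda,\kappa')$-quasi-geodesic, and you propose to replace it via Proposition~\ref{prop:almost_geod_exist}. But that proposition only manufactures \emph{some} $(1,\kappa)$-almost-geodesic between two given points; it says nothing about how the new curve sits relative to $f\circ\sigma_n$, so your ``short comparison argument'' has nothing to compare. The tool you actually need is Lemma~\ref{lem:approximating_quasi-geodesics}: given any $(\lambda_0,\kappa_0)$-quasi-geodesic it produces a $(\lambda,\kappa)$-almost-geodesic with the same endpoints and within bounded \emph{Hausdorff} $K_{\Omega_2}$-distance $r$ of it, with $(\lambda,\kappa,r)$ depending only on $(\lambda_0,\kappa_0)$. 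With that in hand, visibility applied to $\widehat\sigma_n$ yields a point of $f\circ\sigma_n$ in a fixed $K_{\Omega_2}$-ball, and the rest of your argument goes through. This lemma is exactly the ``continuous-quasi-geodesic variant of visibility'' you allude to; it is not an optional alternative but the crux of the proof.
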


\begin{remark}
We should point out here that in the above theorem, $\Omega_1$ is complete Kobayashi hyperbolic. This is a part of the condition that $\Omega_1$ has well-behaved geodesics.
\end{remark}

To state our next result, we need a definition.

\begin{definition}\label{def:Lip_domain}
Let $\Omega \subset \Cb^d$ be a domain. We say that $\Omega$ is a \emph{Lipschitz domain} (resp., a \emph{$\Cc^{0}$ domain}) if for every $x \in \partial \Omega$, there is a neighborhood $U_x$ of $x$, a unitary change of coordinates centered at $x$, and a Lipschitz function (resp., continuous function) $\varphi_x$ on some open neighborhood of $0$ in $\Cb^{d-1} \times \Rb$ such that, if $w=(w_1,\dots, w_n)$ denotes these centered coordinates and $W_x\ni 0$ the range of this chart, then $U_x \cap \Omega$ relative to these coordinates is given by $\{(w_1,\dots, w_n)\in W_x : {\sf Im}(w_n) > \varphi_x(w_1,\dots, w_{n-1}, {\sf Re}(w_n))\}$.
\end{definition}

\begin{remark}\label{rem:about_C0_Lip_dom}
A domain $\Omega\varsubsetneq \Cb^d$ such that $\partial\Omega$ is an embedded Lipschitz submanifold of $\Rb^{2d}$ is not necessarily a Lipschitz domain. However, the latter terminology is standard (with the unitary changes of coordinates mentioned in Definition~\ref{def:Lip_domain} replaced by orthogonal changes of coordinates in the case of domains in $\Rb^N$, $N\geq 2$). Such domains have many pleasant properties and have been studied extensively (see, e.g., \cite{A1975} and the references therein). Similarly, a domain $\Omega\varsubsetneq \Cb^d$ such that $\partial\Omega$ is an embedded topological submanifold of $\Rb^{2d}$ is not necessarily a $\Cc^{0}$ domain. The latter statement and the first sentence of this remark are a part of \cite[Theorem~1.2.1.5]{G1985} (with the understanding that our domains are subsets of $\Cb^d$ and that the local changes of coordinates mentioned in Definition~\ref{def:Lip_domain} replace the orthogonal changes of coordinates that are a part of the definitions in \cite{G1985}). An example of a domain $\Omega \varsubsetneq \Cb$ whose boundary is an embedded Lipschitz submanifold and such that $\Omega$ is not even a $\Cc^{0}$ domain (and, hence, not a Lipschitz domain) is given in \cite[pp.~7--9]{G1985}.
\end{remark} 

We are now able to state our next extension theorem. It is a part of the focus of this paper, alluded to above, on planar domains.

\begin{theorem}\label{thm:planar_domains}
Let $\Omega_1, \Omega_2 \varsubsetneq \Cb$ be Lipschitz domains. If $f : \Omega_1 \rightarrow \Omega_2$ is a biholomorphism, then $f$ extends to a homeomorphism $\overline{\Omega}_1^{End} \rightarrow \overline{\Omega}_2^{End}$. 
\end{theorem}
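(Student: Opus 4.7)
The plan is to apply Theorem~\ref{thm:extensions} to both $f$ and its inverse, and then combine the two continuous extensions into a homeomorphism. The preparatory work is to verify that any Lipschitz planar domain satisfies both hypotheses of Theorem~\ref{thm:extensions}\,---\,namely, that it has well-behaved geodesics and that it is a visibility domain with respect to the Kobayashi distance.

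First I would show that every Lipschitz planar domain $\Omega\varsubsetneq\Cb$ is locally a Goldilocks domain at every boundary point, so that Theorem~\ref{thm:visible} (applied with $\partial\Omega\setminus\partial_{{\rm lg}}\Omega=\emptyset$) gives visibility. Fix $x\in\partial\Omega$ and let $U$ be a neighborhood on which $\partial\Omega$ is a Lipschitz graph. After a rotation centered at $x$, the set $\Omega\cap U$ is contained in an open half-plane $H$ with $x\in\partial H$; by the distance-decreasing property of the Kobayashi pseudo-metric under inclusions, $k_\Omega(z;v)\geq k_H(z;v)\sim \abs{v}/\delta_\Omega(z)$ for $z\in\Omega\cap U$ sufficiently close to $x$. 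Hence $M_{\Omega,U}(r)\lesssim r$ for small $r>0$, which makes the integral in condition (1) of Definition~\ref{defn:local_GL} converge. The logarithmic bound in condition (2) is the standard estimate for $\Cc^{0,1}$ boundaries and can be extracted from \cite[Lemma~2.3]{BZ2017}.

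Next I would verify that any Lipschitz planar domain has well-behaved geodesics (in the sense of Section~\ref{sec:quasi_isom_extn}). By the classical characterization for hyperbolic planar domains, $(\Omega,K_\Omega)$ is Cauchy-complete exactly when no point of $(\Cb\cup\{\infty\})\setminus\Omega$ is isolated in that set; a Lipschitz boundary has no isolated points, so $\Omega$ is complete Kobayashi hyperbolic. Furthermore, projecting hyperbolic geodesics from the universal cover $\Db\to\Omega$ produces a genuine Kobayashi geodesic between any two points of $\Omega$, and these facts together should yield the well-behaved-geodesics property. With both hypotheses in hand for $\Omega_1$ and $\Omega_2$, the biholomorphism $f$ and its inverse are continuous isometries for the respective Kobayashi distances, so Theorem~\ref{thm:extensions} applied to $f$ and then to $f^{-1}$ yields continuous extensions $\bar f\colon\overline{\Omega}_1^{End}\to\overline{\Omega}_2^{End}$ and $\bar g\colon\overline{\Omega}_2^{End}\to\overline{\Omega}_1^{End}$. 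Since $\Omega_i$ is dense in $\overline{\Omega}_i^{End}$ and $\bar g\circ\bar f$ agrees with the identity on $\Omega_1$, continuity forces $\bar g\circ\bar f=\id$; symmetrically $\bar f\circ\bar g=\id$, whence $\bar f$ is the desired homeomorphism.

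The main difficulty I anticipate lies in the first step: one must secure the lower bound $k_\Omega\gtrsim 1/\delta_\Omega$ uniformly on $\Omega\cap U$ at every boundary point, including at ``corners'' where the local Lipschitz graph is not smooth. This requires choosing the rotation and the ambient half-plane uniformly across a neighborhood of $x\in\partial\Omega$, a reduction that is routine but warrants explicit care because condition (1) of Definition~\ref{defn:local_GL} rests on it.
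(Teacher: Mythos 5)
Your reduction to Theorem~\ref{thm:extensions} hinges on the claim that every Lipschitz planar domain has well-behaved geodesics in the sense of Section~\ref{sec:quasi_isom_extn}, and that claim is not proved\,---\,``these facts together should yield the well-behaved-geodesics property'' is the entire argument offered\,---\,and is very likely false in the generality you need. Completeness is indeed automatic (the Kobayashi distance of a hyperbolic planar domain is its Poincar\'e distance, which is complete), but the second half of the definition demands that whenever $z_n, w_n$ tend to the \emph{same} point $\zeta \in \overline{\Omega}_1^{End}\setminus\Omega_1$, every sequence of geodesic segments joining them escapes every compact set. Theorem~\ref{thm:planar_domains} is explicitly aimed at infinitely-connected, unbounded Lipschitz domains, and for these the condition is in serious doubt at the ends: for the paper's own example $\Cb\setminus\bigcup_{m,n\in\Zb}\overline{\Bb(m+ni,1/4)}$, which is a smooth (hence Lipschitz) domain with a single end and is quasi-isometric to $\Rb^2$, one expects geodesic segments passing through a fixed Kobayashi ball whose two endpoints both tend to the unique end (the flat, ``antipodal points'' behavior that is also exactly what destroys Gromov hyperbolicity there). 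Nothing in the Lipschitz hypothesis rules this out, and no visibility statement helps, since visibility only controls almost-geodesics whose endpoints approach \emph{distinct} boundary points. So the crucial hypothesis of Theorem~\ref{thm:extensions} for the source domain is unavailable, and with it the whole reduction.

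This is precisely why the paper does not argue via Theorem~\ref{thm:extensions}. Its proof splits into two cases: for finite boundary points it uses only the visibility of the \emph{target} together with explicit radial $(\lambda_\xi,0)$-almost-geodesics $\sigma_{\xi,x}(t)=x+\tfrac{r_\xi}{2}e^{-2t}\nu_\xi$ and Proposition~\ref{prop:limits_of_geodesic_rays} (Lemma~\ref{lem:not_infinite_bd_points}); for ends it abandons metric arguments altogether and derives a contradiction from a Hausdorff-limit/connectedness argument, a Riemann map onto a Jordan neighborhood of an arc $I\subset\partial\Omega_2$, and the Luzin--Privalov theorem. That second step is the real content of the theorem and is entirely missing from your plan. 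A smaller but genuine error: your verification of condition~(1) of Definition~\ref{defn:local_GL} via ``$\Omega\cap U\subset H$ implies $k_\Omega\geq k_H$'' runs the monotonicity of the Kobayashi metric backwards (an inclusion $\Omega\cap U\hookrightarrow H$ only bounds $k_{\Omega\cap U}$ from below, not $k_\Omega$); the correct lower bound $k_\Omega(z;v)\gtrsim\abs{v}/\delta_\Omega(z)$ comes from the \emph{exterior} cone condition and an explicit embedding of all of $\Omega$ into $\Db$, as in Lemma~\ref{lem:ext_cone} and Proposition~\ref{prop:planar}. The final bookkeeping (continuity of the extensions and $\bar g\circ\bar f=\id$ by density) is fine and matches the paper's Claim~II.
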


This result is similar in spirit to Carath{\'e}odory's extension theorem for Riemann mappings. The simply connected domains to which the latter is applicable can have less regular boundaries than those of the domains in Theorem~\ref{thm:planar_domains}, but observe that $\Omega_1$ and $\Omega_2$ \textbf{need not} be simply connected. In fact, note that the domains for which Theorem~\ref{thm:planar_domains} holds true need not even be finitely-connected. Considerations very different from those in the proof of Carath{\'e}odory's extension theorem feature in the proof of Theorem~\ref{thm:planar_domains}: its proof relies crucially on visibility in the sense of Definition~\ref{defn:visib_dom}.

Gromov hyperbolicity is, in principle, a framework for establishing results like Theorem~\ref{thm:planar_domains}. However, it is rather easy to construct planar domains where  the Poincar{\'e} distance (which equals the Kobayashi distance) is not Gromov hyperbolic. In particular, in Section~\ref{sec:non-G-h} we give examples of planar domains that have $\Cc^{\infty}$ boundary on which the Poincar{\'e} distance is not Gromov hyperbolic. These examples are locally Goldilocks domains; by Theorem~\ref{thm:visible}, therefore, they are visibility domains with respect to the Kobayashi distance.

\subsection{Gromov hyperbolic spaces}
We now turn to the relationship between Gromov hyperbolicity and visibility alluded to earlier. We assume that the reader has some familiarity with Gromov hyperbolic metric spaces. For the present discussion, the metric space of interest is $(\Omega, K_{\Omega})$, where $\Omega$ is Kobayashi hyperbolic and\,---\,we reiterate\,---\,not necessarily bounded. Since we do not assume that $\Omega$ is complete Kobayashi hyperbolic, a few words are in order. In what follows, $\Gp{z}{w}{o}$ will denote the Gromov product, with respect to a base-point $o\in \Omega$, on $(\Omega, K_{\Omega})$ (see Definition~\ref{defn:G_seq}). Then, $(\Omega, K_{\Omega})$ is said to be \emph{Gromov hyperbolic} if there exists a $\delta\geq 0$ such that, for any four points $a, b, c, o \in \Omega$,
\begin{equation}\label{eq:Gromov_ineq}
\min\left\{ \Gp{a}{b}{o}, \Gp{b}{c}{o} \right\} \leq \Gp{a}{c}{o} + \delta.
\end{equation}
Loosely speaking, the above condition encodes the idea that, metrically, $(\Omega, K_{\Omega})$ ``approximately resembles'' an $\Rb$-tree.

To state the first result of this section, we need to introduce the concept of the Gromov boundary. Every Gromov hyperbolic space $(X, d)$ has an abstract boundary, called the \emph{Gromov boundary} and denoted by $\partial_{G}X$, and a topology on $(X \cup \partial_{G}X)$ that compactifies $X$ if $(X, d)$ is a proper geodesic space. For a full description of the set $\partial_{G}X$, and a discussion of the topology on $(X \cup \partial_{G}X)$, we refer the reader to Section~\ref{ssec:Gromov_top}. Having stated these preliminaries, we present the following result:

\begin{theorem}\label{thm:Gromov_visi}
Let $\Omega \subset \Cb^d$ be a Kobayashi hyperbolic domain and suppose $(\Omega, K_{\Omega})$ is Gromov hyperbolic. If the identity map ${\sf id}_{\Omega}$ extends to a homeomorphism from $(\Omega \cup \partial_{G}\Omega)$ onto $\overline{\Omega}^{End}$, then:
\begin{enumerate}
\item\label{item:complete} $(\Omega, K_{\Omega})$ is Cauchy-complete,
\item\label{item:wvd} $\Omega$ is a weak visibility domain.
\end{enumerate}
\end{theorem}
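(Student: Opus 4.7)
The plan is to exploit the identification $\Omega \cup \partial_G\Omega \cong \overline{\Omega}^{End}$ via a single unifying observation: any convergence $z_n\to\xi\in\partial_G\Omega$ in the Gromov topology forces $K_\Omega(z_n,o)\to\infty$ (an immediate consequence of the definition of the Gromov compactification, since any representing sequence $w_k\to\xi$ gives $\Gp{z_n}{w_k}{o}\to\infty$ while $\Gp{z_n}{w_k}{o}\leq K_\Omega(z_n,o)$). Combined with the compactness of $\Omega\cup\partial_G\Omega$, this instantly yields part (\ref{item:complete}): a $K_\Omega$-Cauchy sequence $(z_n)$ has bounded $K_\Omega(z_n,o)$, so any subsequential limit in the Gromov topology must lie in $\Omega$, on which the Gromov topology coincides with the $K_\Omega$-topology; the Cauchy condition then promotes this subsequential convergence to convergence of the full sequence. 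The same observation shows that every closed $K_\Omega$-ball centered at $o$ is compact in $\Omega$---a fact I shall need below.

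The heart of part (\ref{item:wvd}) is a Morse-type estimate I plan to derive directly from the Gromov-product form of $\delta$-hyperbolicity: for every $(1,\kappa)$-almost-geodesic $\sigma:[0,T]\to\Omega$ with endpoints $x=\sigma(0)$ and $y=\sigma(T)$, there exists $t^*\in[0,T]$ with
\[
K_\Omega\bigl(o,\sigma(t^*)\bigr)\,\leq\,\Gp{x}{y}{o}+2\delta+2\kappa.
\]
Setting $L_0=K_\Omega(o,x)$ and $r=\Gp{x}{y}{o}$, a standard triangle-inequality check gives $r\leq L_0$, so the candidate $t^*:=L_0-r$ lies in $[0,\infty)$. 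Applying the hyperbolicity inequality
\[
\Gp{\sigma(0)}{\sigma(T)}{o}\,\geq\,\min\bigl\{\Gp{\sigma(0)}{\sigma(t^*)}{o},\,\Gp{\sigma(T)}{\sigma(t^*)}{o}\bigr\}-\delta
\]
and expanding whichever Gromov product realizes the minimum on the right, using the two-sided almost-geodesic estimate $\bigl|\abs{s-t}-K_\Omega(\sigma(s),\sigma(t))\bigr|\leq\kappa$, reduces in either case to the displayed inequality after brief algebra. The edge case $L_0-r>T$ forces $K_\Omega(o,y)<r+\kappa$ by elementary arithmetic from $K_\Omega(x,y)\leq T+\kappa$, so $t^*=T$ suffices there.

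The second ingredient is a uniform Gromov-product bound near distinct boundary points: for $\xi\neq\eta$ in $\partial_G\Omega\cong\partial\overline{\Omega}^{End}$ and any open $V_\xi,V_\eta\subset\overline{\Omega}^{End}$ with disjoint closures, $\Gp{x}{y}{o}$ is bounded above by some $N$ as $x$ ranges over $V_\xi\cap\Omega$ and $y$ over $V_\eta\cap\Omega$. If not, sequences $x_n\in V_\xi,\,y_n\in V_\eta$ with $\Gp{x_n}{y_n}{o}\to\infty$ must, by compactness of $\overline{\Omega}^{End}$, admit subsequential limits $\xi^*\in\overline{V_\xi}$ and $\eta^*\in\overline{V_\eta}$; divergence of the Gromov product forces $K_\Omega(x_n,o),K_\Omega(y_n,o)\to\infty$, putting $\xi^*,\eta^*\in\partial_G\Omega$, and the standard fact that sequences with divergent mutual Gromov product converge to the same boundary point forces $\xi^*=\eta^*$---contradicting $\overline{V_\xi}\cap\overline{V_\eta}=\emptyset$. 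Combining the two ingredients, every $(1,\kappa)$-almost-geodesic with endpoints in $V_\xi,V_\eta$ meets the closed $K_\Omega$-ball $\{z\in\Omega:K_\Omega(o,z)\leq N+2\delta+2\kappa\}$, which is compact by the first paragraph. This is precisely weak visibility.

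The main obstacle will be the Morse-type distance estimate: in the classical setting of geodesic Gromov hyperbolic spaces it is a standard thin-triangle fact, but here $(\Omega,K_\Omega)$ is not a priori geodesic, so all distance comparisons must be conducted purely with the almost-geodesic bounds and the Gromov-product form of hyperbolicity, with careful bookkeeping of the additive $\kappa$-errors in each of the two cases that arise when the minimum in the hyperbolicity inequality is resolved.
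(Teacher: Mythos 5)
Your proposal is correct, and for part (\ref{item:wvd}) it takes a genuinely different route from the paper. Part (\ref{item:complete}) is essentially the paper's argument: both deduce properness of $K_\Omega(o,\bcdot)$ from the fact that convergence to a point of $\partial_G\Omega$ forces $K_\Omega(o,z_n)\to\infty$ (the paper then invokes Hopf--Rinow; you argue directly with Cauchy sequences; either works). For part (\ref{item:wvd}), the paper first develops a slim-triangles reformulation of Gromov hyperbolicity for $(1,\kappa)$-near-geodesics (the $(\mathscr{F},\kappa,\delta)$-Rips condition of Theorem~\ref{thm:equiv_GH}), then argues by contradiction: it joins $o$ to the endpoints of the offending almost-geodesics by genuine geodesics\,---\,which requires the Cauchy-completeness of part (\ref{item:complete})\,---\,and extracts $K_\Omega(o,\sigma_n)\leq \Gp{z_n}{w_n}{o}+2\delta^*$ from slimness of the resulting triangles. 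You instead prove the projection estimate $K_\Omega(o,\sigma)\leq \Gp{x}{y}{o}+2\delta+2\kappa$ directly from the four-point inequality with the explicit parameter $t^*=K_\Omega(o,x)-\Gp{x}{y}{o}$; I checked both cases of the minimum and the edge case $t^*>T$, and the bookkeeping is right ($2\delta+\kappa$ in one case, $2\delta+2\kappa$ in the other, $r+\kappa$ in the edge case). Your second ingredient\,---\,the uniform bound on $\Gp{x}{y}{o}$ for $x,y$ ranging over $\Omega\cap V_\xi$ and $\Omega\cap V_\eta$ with $\clos{V_\xi}\cap\clos{V_\eta}=\emptyset$\,---\,is the same compactness-plus-equivalence-class argument the paper uses, resting on the standard fact that two Gromov sequences whose diagonal Gromov products are unbounded are equivalent (a two-fold application of the four-point inequality). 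What your route buys: it bypasses the slim-triangles machinery entirely, it needs no geodesics (so part (\ref{item:wvd}) becomes independent of part (\ref{item:complete})), it is direct rather than by contradiction at the level of the almost-geodesics, and it exhibits the compact set explicitly as a closed Kobayashi ball of radius $N+2\delta+2\kappa$. What the paper's route buys is Theorem~\ref{thm:equiv_GH} itself, which the authors present as being of independent interest.
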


Theorem~\ref{thm:Gromov_visi} is reminiscent of one part of a result by Bracci \emph{et al.} \cite[Theorem~3.3]{BNT2022} (also see \cite[Theorem~1.4]{CMS2021} by Chandel \emph{et al.}) which states that for a bounded, complete Kobayashi hyperbolic (hence geodesic) domain $\Omega \subset \Cb^d$ such that $(\Omega, K_{\Omega})$ is Gromov hyperbolic, if ${\sf id}_{\Omega}$ extends to a homeomorphism from $(\Omega \cup \partial_{G}\Omega)$ onto $\overline{\Omega}$, then $\partial\Omega$ possesses a type of visibility (analogous to what $\partial_{G}\Omega$ possesses; see \cite[Lemma~III.H-3.2]{BH1999} for details). It turns out that with the assumptions just stated, one can deduce a form of weak visibility wherein the condition $(*)$ in Definition~\ref{defn:visib_dom} holds true just for geodesics \cite{BNT2022*}. In contrast to \cite{BNT2022, CMS2021}:
\begin{itemize}
\item We do not, in Theorem~\ref{thm:Gromov_visi}, assume \emph{a priori} that $(\Omega, K_{\Omega})$ is Cauchy-complete.
\item For the latter reason\,---\,despite the interesting conclusion (\ref{item:complete})\,---\,the natural visibility property one would like to deduce for $\Omega$ is weak visibility (as given by Definition~\ref{defn:visib_dom}). This entails the harder task of establishing the control described by Definition~\ref{defn:visib_dom} for the relevant $(1, \kappa)$-almost-geodesics for \textbf{all $\boldsymbol{\kappa \geq 0}$.}
\end{itemize}
Establishing this control for $(1, \kappa)$-almost-geodesics for all $\kappa \geq 0$ isn't a mere curiosity. Weak visibility of $\Omega$ turns out to be crucial in proving the next result (refer to the observation following Theorem~\ref{thm:Gromov_visi_bdy} below).
 
With the hypothesis of Theorem~\ref{thm:Gromov_visi}, a uniform slim-triangles condition on $(\Omega, K_{\Omega})$, for triangles whose sides are $(1, \kappa)$-almost-geodesics, would provide the most intuitive argument for $\Omega$ being a a weak visibility domain. To this end, for $\Omega \subset \Cb^d$ Kobayashi hyperbolic and such that $(\Omega, K_{\Omega})$ is Gromov hyperbolic, we establish a slim-triangles formulation\,---\,involving $(1, \kappa)$-almost-geodesics\,---\,for the Gromov hyperbolicity of $(\Omega, K_{\Omega})$. We refer the reader to Section~\ref{ssec:slim} for an exact theorem that establishes this, and which might also be of independent interest.

The next result is motivated by the speculation by Balogh--Bonk \cite[Section~6]{BB2000} that Gromov hyperbolicity of $(\Omega, K_{\Omega})$ may be provable for $\Omega$ a smoothly-bounded pseudoconvex domain of finite type. In \cite{BB2000}, they show that when $\Omega$ is a bounded strongly pseudoconvex domain with $\Cc^2$-smooth boundary, then $(\Omega, K_{\Omega})$ is Gromov hyperbolic. The second author proved \cite{Z2014} that if $\Omega$ is a bounded convex domain with $\Cc^\infty$-smooth boundary, then $(\Omega, K_{\Omega})$ is Gromov hyperbolic if and only if $\Omega$ is of finite type. Recently, Fiacchi \cite{F2022} showed that if $\Omega\subset \Cb^2$ is a bounded pseudoconvex domain with $\Cc^\infty$-smooth boundary and is of finite type, then $(\Omega, K_{\Omega})$ is Gromov hyperbolic. Attempts to extend the last result to higher dimensions encounter considerable technical complications. Furthermore, very little is known, beyond the main theorem in \cite{BB2000}, about the Gromov hyperbolicity of $(\Omega, K_{\Omega})$ when $\partial\Omega$ has low regularity (however,
see \cite[Theorem~1.6]{Z2017}, and see \cite{NTT2016, NT2018, PZ2018} for instances of non-Gromov-hyperbolic domains). It would thus be of interest to identify obstacles to the Gromov hyperbolicity of the Kobayashi distance. Theorem~\ref{thm:Gromov_visi_bdy} identifies such an obstacle. In what follows, we will refer to a complex subvariety of some (typically small) open set intersecting $\partial\Omega$ as a \emph{germ of a complex variety}.

\begin{theorem}\label{thm:Gromov_visi_bdy}
Let $\Omega \subset \Cb^d$ be a Kobayashi hyperbolic $\Cc^{0}$ domain.
If $(\Omega, K_{\Omega})$ is Gromov hyperbolic and the identity map ${\sf id}_{\Omega}$ extends to a homeomorphism from $(\Omega \cup \partial_{G}\Omega)$ onto $\overline{\Omega}^{End}$, then $\partial\Omega$ does not contain any germs of complex varieties of positive dimension.
\end{theorem}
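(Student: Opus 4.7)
The plan is to argue by contradiction. Suppose $\partial\Omega$ contains a germ of a complex variety of positive dimension; after restriction to a non-singular one-dimensional slice, suitable parameterization, and shrinking of the parameter disc, one obtains a non-constant holomorphic map $\Phi : \Db \rightarrow \partial\Omega$ whose image lies inside a single chart $U_p$ afforded by the $\Cc^0$ structure of Definition~\ref{def:Lip_domain}. In those local coordinates $\Omega \cap U_p = \{{\sf Im}(w_d) > \varphi_p(w_1,\dots, w_{d-1}, {\sf Re}(w_d))\}$ for some continuous $\varphi_p$, so the translates
\[
\Phi_\epsilon(\zeta) := \Phi(\zeta) + i\epsilon\, \vec{e}_d
\]
are holomorphic maps of $\Db$ into $\Omega \cap U_p$ for all sufficiently small $\epsilon > 0$, since adding $i\epsilon$ raises ${\sf Im}(w_d)$ by $\epsilon$ without altering any other coordinate on which $\varphi_p$ depends. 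This construction is essentially the estimate \eqref{eqn:deriv_1} alluded to earlier.

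The Kobayashi distance-decreasing property of $\Phi_\epsilon$ then yields the uniform bound
\[
K_\Omega\bigl(\Phi_\epsilon(\zeta_1), \Phi_\epsilon(\zeta_2)\bigr) \leq K_\Db(\zeta_1, \zeta_2)
\]
for all $\zeta_1, \zeta_2 \in \Db$ and all small $\epsilon > 0$. Pick $\zeta_1 \neq \zeta_2$ so that $p_j := \Phi(\zeta_j)$ are distinct (possible since $\Phi$ is non-constant), let $\epsilon_n \searrow 0$, and set $z_n^{(j)} := \Phi_{\epsilon_n}(\zeta_j) \in \Omega$. Then $z_n^{(j)} \to p_j$ in Euclidean norm, and\,---\,as $\Omega$ is a $\Cc^0$ domain\,---\,the distinct finite boundary points $p_1 \neq p_2$ correspond to distinct points $\xi_1, \xi_2 \in \partial\overline{\Omega}^{End}$. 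Meanwhile, $K_\Omega(z_n^{(1)}, z_n^{(2)}) \leq K_\Db(\zeta_1, \zeta_2) =: L$ is bounded in $n$.

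The remaining step invokes Theorem~\ref{thm:Gromov_visi}: its hypotheses are in force, so $\Omega$ is a weak visibility domain and, via the homeomorphism $\Omega \cup \partial_G\Omega \cong \overline{\Omega}^{End}$, we have $K_\Omega(o, z_n^{(j)}) \to \infty$ for any base point $o \in \Omega$. Fix $\overline{\Omega}^{End}$-open neighborhoods $V_1, V_2$ of $\xi_1, \xi_2$ with disjoint closures, and let $K \subset \Omega$ be the compact set supplied by weak visibility with $\lambda = 1$, $\kappa = 1$. For large $n$ one has $z_n^{(j)} \in V_j$, and by Proposition~\ref{prop:almost_geod_exist} a $(1,1)$-almost-geodesic $\sigma_n : [0, T_n] \rightarrow \Omega$ from $z_n^{(1)}$ to $z_n^{(2)}$ has parameter length $T_n \leq L + 1$. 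Weak visibility forces $\sigma_n([0, T_n]) \cap K \neq \emptyset$, whence
\[
K_\Omega(o, z_n^{(1)}) \leq (T_n + 1) + \sup_{q \in K} K_\Omega(o, q) \leq L + 2 + \sup_{q \in K} K_\Omega(o, q),
\]
contradicting $K_\Omega(o, z_n^{(1)}) \to \infty$. The main technical obstacle is producing the holomorphic shift $\Phi_\epsilon$ under the mere $\Cc^0$ hypothesis on $\partial\Omega$; this succeeds because the epigraph representation provided by Definition~\ref{def:Lip_domain} is manifestly compatible with a purely imaginary translation in the distinguished coordinate, a feature that would be unavailable for arbitrary topological boundaries and is precisely what the $\Cc^0$ assumption in the theorem encodes.
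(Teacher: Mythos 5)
Your proposal is correct and follows essentially the same strategy as the paper: push the analytic disc into $\Omega$ along the distinguished direction supplied by the $\Cc^{0}$ graph representation, use the distance-decreasing property to control Kobayashi distances along the translated discs, and contradict the weak visibility furnished by Theorem~\ref{thm:Gromov_visi}. The only difference is in the endgame: the paper exhibits explicit $(1,T)$-almost-geodesics $\varphi_n\circ\gamma$ lying inside the translated discs and observes that they escape every compact subset of $\Omega$, whereas you invoke Proposition~\ref{prop:almost_geod_exist} to obtain abstract $(1,1)$-almost-geodesics of bounded parameter length joining $z_n^{(1)}$ to $z_n^{(2)}$ and derive the contradiction from the properness of $K_{\Omega}(o,\bcdot)$ (i.e., conclusion (1) of Theorem~\ref{thm:Gromov_visi}), an ingredient the paper's version does not need.
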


In view of Theorem~\ref{thm:Gromov_visi}, the domain $\Omega$ in the above theorem is a weak visibility domain. Its proof now follows from the fact that if $\partial\Omega$ contained a germ of a complex variety, then there would exist a number $T>0$ and a sequence of $(1,T)$-almost-geodesics $(\sigma_n)_{n\geq 1}$ whose behavior violates the condition of weak visibility.

We should point out that one cannot, in general, omit the condition on the extension of the map ${\sf id}_{\Omega}$ in Theorem~\ref{thm:Gromov_visi_bdy}. To see this, we refer to \cite[Proposition~1.9]{Z2017} which presents, for each $d \geq 2$, an example of a $\Cb$-convex domain $\Omega \subset \Cb^d$ such that $(\Omega, K_{\Omega})$ is Gromov hyperbolic and $\partial\Omega$ contains a complex affine ball of dimension $(d-1)$. However, more can be said in the case of convex domains. Gaussier--Seshadri \cite{GS2018} showed that the presence of a non-trivial holomorphic disk in the boundary of a $\Cc^\infty$-smoothly bounded convex domain $\Omega \subset \Cb^d$ is an obstruction to $(\Omega, K_{\Omega})$ being Gromov hyperbolic. This result was extended by the second author to all convex domains that are Kobayashi hyperbolic without any regularity assumptions on their boundaries \cite[Theorem~1.6]{Z2014}.

\subsection{Wolff--Denjoy theorems}
There has long been interest in understanding the behavior of iterates of holomorphic self-maps of domains in complex Euclidean space. Unlike the chaotic behavior often seen in complex dynamics, iterates of holomorphic self-maps of Kobayashi hyperbolic domains often have very simple behavior. This is best demonstrated by the classical theorem of Wolff--Denjoy for holomorphic self-maps of the unit disk. 

\begin{result}[\cite{D1926, W1926}]\label{res:WD}
Suppose $f:\Db \rightarrow \Db$ is a holomorphic map then either:
\begin{enumerate}
\item $f$ has a fixed point in $\Db$; or
\item there exists a point $\xi \in \partial \Db$ such that
\begin{equation*}
\lim_{n \rightarrow \infty} f^n(z) = \xi
\end{equation*}
for any $z \in \Db$, this convergence being uniform on compact subsets of $\Db$.
\end{enumerate}
\end{result}

The above result was extended to the unit (Euclidean) ball in $\Cb^d$, for all $d$, by Herv{\'e} \cite{H1963}.   
It was further generalized by Abate\,---\,see \cite{A1988} or \cite[Chapter~4]{A1989}\,---\,to bounded strongly convex domains. The latter result was further generalized to a variety of bounded convex domains with progressively weaker assumptions  (see \cite{A2014} and the references therein), and even to certain holomorphic self-maps of open unit balls of reflexive Banach spaces that are reasonably ``nice'' (see for instance~\cite{BKR2013} and the references therein). The main result in \cite{H1984} by Huang is one of the few results that generalize Result~\ref{res:WD} to a class of domains in $\Cb^d$ that includes domains that are non-convex: namely, to (topologically) contractible strongly pseudoconvex domains. To put in perspective the hypothesis of Huang's result: in \cite{AH1992}, Abate--Heinzner  constructed a bounded contractible pseudoconvex domain that is strongly pseudoconvex except at one boundary point and admits a periodic automorphism having no fixed points. Wolff--Denjoy-type theorems are also known to hold on certain metric spaces where a boundary at infinity replaces the topological boundary; see for instance~\cite{K2001} or~\cite{B1997}. 

It is not hard to see that the dichotomy in Result~\ref{res:WD} fails in general if the domain considered is not contractible. An appropriate dichotomy that is suited to more general situations was  introduced by Abate in \cite{A1991} and, more recently, is seen in Wolff--Denjoy-type theorems established in \cite{BZ2017} and \cite{BM2021}. But, to the best of our knowledge, there are no versions of the above results for unbounded domains in the literature. Using the framework of this paper, we will demonstrate the following Wolff--Denjoy-type theorem:

\begin{theorem}\label{thm:WD_visible} 
Suppose $\Omega \subset \Cb^d$ is a taut domain. If $\Omega$ is a weak visibility domain and $F : \Omega \rightarrow \Omega$ is a holomorphic self-map, then either 
\begin{enumerate}
\item for any $z \in \Omega$, the orbit $\{ F^n(z) : n \geq 1\}$ is relatively compact in $\Omega$; or 
\item there exists $\xi \in \partial\overline{\Omega}^{End}$ such that 
$$
\lim_{n \rightarrow \infty} F^n(z) = \xi
$$
for all $z \in \Omega$, this convergence being uniform on compact subsets of $\Omega$.
\end{enumerate}
\end{theorem}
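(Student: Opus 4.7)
The plan is to combine tautness with weak visibility. Since $F$ is $1$-Lipschitz with respect to $K_\Omega$ and $\Omega$ is taut, the family $\{F^n\}$ is normal, and every subsequence has a sub-subsequence that is either compactly divergent or converges locally uniformly to a holomorphic map $\Omega \to \Omega$. First I would establish the dichotomy: either every orbit $\{F^n(z)\}$ is relatively compact in $\Omega$ (which is conclusion (1)), or $\{F^n\}$ itself is compactly divergent. The implication ``some orbit is relatively compact $\Rightarrow$ every orbit is relatively compact'' uses $1$-Lipschitzness of $F$ together with the standard fact that, in a taut domain, any sequence leaving every compact subset of $\Omega$ has $K_\Omega$-distance to a fixed point diverging to infinity.

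In the compactly divergent case, for each $z \in \Omega$ the sequence $F^n(z)$ leaves every compact subset of $\Omega$, so $K_\Omega(F^n(z), p) \to \infty$ for every $p \in \Omega$, and every accumulation point of $\{F^n(z)\}$ in $\overline{\Omega}^{End}$ lies in $\partial\overline{\Omega}^{End}$. The core step is to show each orbit converges to a single $\xi \in \partial\overline{\Omega}^{End}$. I would first prove a \emph{shift-invariance lemma}: if $F^{n_k}(z) \to \xi$, then $F^{n_k + j}(z) \to \xi$ for every fixed $j \ge 0$. By Proposition~\ref{prop:almost_geod_exist}, for each $\kappa > 0$ there exists a $(1, \kappa)$-almost-geodesic $\sigma_k$ from $F^{n_k}(z)$ to $F^{n_k + j}(z)$, and using $1$-Lipschitzness of $F$ one sees its length is at most $j \cdot K_\Omega(z, F(z)) + \kappa$, bounded independently of $k$. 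Were $F^{n_k + j}(z) \to \eta \ne \xi$ along a sub-subsequence, weak visibility (Definition~\ref{defn:visib_dom}) would yield a compact $K \subset \Omega$ met by each $\sigma_k$, and the $(1, \kappa)$-almost-geodesic upper bound would give $K_\Omega(F^{n_k}(z), K) \le j \cdot K_\Omega(z, F(z)) + 2\kappa$ bounded, contradicting $K_\Omega(F^{n_k}(z), K) \to \infty$.

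To deduce uniqueness of the limit of $\{F^n(z)\}$, suppose $\xi, \eta \in \partial\overline{\Omega}^{End}$ are distinct limits along subsequences $n_k, m_l \to \infty$. Writing $m_l = n_{k(l)} + j_l$ with $n_{k(l)} \le m_l < n_{k(l) + 1}$, boundedness of $j_l$ gives an immediate contradiction with shift-invariance via pigeonhole, while unboundedness of $j_l$ allows one to analyze, using the bounded-length $(1, \kappa)$-almost-geodesics between consecutive iterates (length $\le K_\Omega(z, F(z)) + \kappa$), the transitions of the orbit between a neighborhood of $\xi$ and its complement; weak visibility applied to these consecutive pairs again leads to a contradiction via $K_\Omega(F^n(z), K) \to \infty$. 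Once each orbit converges to a unique $\xi(z) \in \partial\overline{\Omega}^{End}$, independence of $\xi(z)$ from $z$ follows from the same mechanism applied to the pair $(F^n(z_1), F^n(z_2))$: the $(1, \kappa)$-almost-geodesic between them has length at most $K_\Omega(z_1, z_2) + \kappa$, and weak visibility rules out distinct limits. Uniformity on compact subsets is deduced by running the same argument on pairs $(F^n(z), F^n(z'))$ with $z, z'$ ranging over a compact set, whose $K_\Omega$-diameter is finite.

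The main obstacle will be the uniqueness step, because weak visibility controls only $(1, \kappa)$-almost-geodesics: a direct almost-geodesic joining two distant orbit points can have unbounded length, and weak visibility alone does not extract a contradiction from it. The resolution is a systematic reduction to \emph{bounded-length} almost-geodesics, either through fixed shifts of a subsequence (length $\le j \cdot K_\Omega(z, F(z)) + \kappa$) or through consecutive orbit-point pairs (length $\le K_\Omega(z, F(z)) + \kappa$); the ``slow motion'' of the orbit in $K_\Omega$ and shift-invariance together carry out this reduction and allow the weak-visibility compact set to force the desired contradiction.
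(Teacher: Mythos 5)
Your outline has a genuine gap: almost every contradiction in it is closed by invoking the ``standard fact that, in a taut domain, any sequence leaving every compact subset of $\Omega$ has $K_\Omega$-distance to a fixed point diverging to infinity,'' and then by asserting $K_\Omega(F^{n_k}(z),K)\to\infty$ for a compact $K$. That statement is precisely the properness of $K_\Omega(\,\cdot\,,o)$, which by Proposition~\ref{prop:Hopf_Rinow} is equivalent to Cauchy-completeness of $(\Omega,K_\Omega)$. Tautness does \emph{not} imply complete hyperbolicity, and the theorem is stated exactly so as to avoid assuming it (see the remark following the statement: ``Theorem~\ref{thm:WD_visible} does not assume that $\Omega$ is a complete Kobayashi hyperbolic domain; it merely requires $\Omega$ to be taut''). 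Without properness, a compactly divergent orbit may remain at bounded $K_\Omega$-distance from a compact set, so the inequality $K_\Omega(F^{n_k}(z),K)\le j\,K_\Omega(z,F(z))+2\kappa$ produces no contradiction, and both your shift-invariance lemma and your uniqueness/oscillation argument collapse. The same issue infects your dichotomy step, and your scheme never addresses the case where the orbit leaves every compact set while $\sup_n K_\Omega(F^n(o),o)<\infty$.

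The way the paper escapes this is worth internalizing. In Lemma~\ref{lem: analogue of Theorem 4.3} the $(1,T)$-almost-geodesics are not abstract curves produced by Proposition~\ref{prop:almost_geod_exist} between two orbit points; they are the images $f_n\circ\sigma$ of one \emph{fixed} curve $\sigma$ with $k_X(\sigma;\sigma')\le 1$. Weak visibility then forces the compact set $K$ to meet $f_n(\sigma([0,T]))$ for all $n$, and the contradiction comes directly from compact divergence of $(f_n)$ --- no divergence of Kobayashi distances is needed. For upgrading subsequential convergence to convergence of the whole orbit, the paper uses Karlsson's argument (Lemma~\ref{lem: analogue of Prop 4.1}): one extracts the record-setting subsequence $\nu_j$ along which $K_\Omega(F^{\nu_j}(o),o)$ exceeds all earlier values, and exploits the Lipschitz estimate $K_\Omega(F^{\nu_{i_j}}(o),F^{\mu_j}(o))\le K_\Omega(F^{\nu_{i_j}-\mu_j}(o),o)<K_\Omega(F^{\nu_{i_j}}(o),o)$ together with weak visibility; this needs only $\limsup_n K_\Omega(F^n(o),o)=\infty$, not properness. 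Your instinct to reduce to bounded-length almost-geodesics is sound, but to make it work you must either replace your abstract connecting curves by pushforwards of a fixed curve under the iterates (so that compact divergence, rather than metric divergence, supplies the contradiction) or adopt the Karlsson subsequence device; as written, the argument proves the theorem only for complete Kobayashi hyperbolic $\Omega$.
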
 

We emphasize that Theorem~\ref{thm:WD_visible} does not assume that $\Omega$ is a complete Kobayashi hyperbolic domain; it merely requires $\Omega$ to be taut. For bounded domains, Theorem~\ref{thm:WD_visible} was established in~\cite{BM2021} using ideas from~\cite{BZ2017}. In contrast, Theorem~\ref{thm:WD_visible} is applicable to unbounded domains that satisfy the hypothesis of Theorem~\ref{thm:visible}\,---\,and thus to domains with very wild boundaries described in Section~\ref{sec:examples}.

\subsection{Frequently used notation}
The following notation will recur throughout this paper.
\begin{enumerate}
\item For $v \in \Cb^d$, $\norm{v}$ will denote the Euclidean norm of $v$. (Also, the phrase \emph{unit vector} will refer to any vector $v \in \Cb^d$ with $\norm{v}=1$.)
\item $\Bb_d(z,r)$ will denote the open Euclidean ball in $\Cb^d$ with center $z$ and radius $r$. However, for simplicity of notation:
  \begin{itemize}
    \item we shall write $\Bb_d(z,r)$ as $\Bb(z,r)$ if $d=1$,
    \item we shall write $\Bb_d(0,1)$ as simply $\Bb_d$ when $d\geq 2$, and
    \item we shall denote the open unit disk in $\Cb$ with center $0$ (i.e., $\Bb(0,1)$) as $\Db$.
  \end{itemize}
\item If $\Omega$ is a domain in $\Cb^d$, $z \in \Omega$, and $\sigma: [a, b] \rightarrow \Omega$ is a curve, we shall abbreviate $K_{\Omega}\big(z, \sigma([a, b])\big)$\,---\,i.e., the Kobayashi distance between $z$ and the image of $\sigma$\,---\,as $K_{\Omega}(z, \sigma)$.
\end{enumerate}

\section{Examples}\label{sec:examples}
This section is dedicated to various examples of domains that are ``irregular'' in specific ways but the geometry of whose boundaries satisfy the condition in Theorem~\ref{thm:visible}. Statements in which the latter is shown for some broad class of domains will be labeled as propositions, while specific constructions will be labeled as examples.

\subsection{The upper bound on the Kobayashi distance} 
The second condition in Definition~\ref{defn:local_GL} is quite mild and is satisfied for domains whose boundaries satisfy mild regularity conditions. 
 
An \emph{open right circular cone with aperture $\theta$} is an open subset of $\Cb^d$ of the form
\begin{align*}
\Gamma(v, \theta):= \{z\in \Cb^d : \Real[\,\langle z, v\rangle\,] > \cos(\theta/2)\norm{z}\},
\end{align*}
where $v$ is some unit vector in $\Cb^d$, $\theta\in (0, \pi)$, and $\langle\bcdot\,,\,\bcdot\rangle$ is the standard
Hermitian inner product on $\Cb^d$. 

\begin{definition}\label{def:cone_cond}
Let $\Omega$ be a domain in $\Cb^d$. We say that $\Omega$ satisfies a \emph{local interior-cone condition} if for each $R > 0$ there exist constants  $r_0 > 0$, $\theta\in (0, \pi)$, and a compact subset $K\subset \Omega$ (which depend on $R$) such that for each $z\in (\Omega\setminus K) \cap \Bb_d(0,R)$, there exist a point $\xi_z\in \partial\Omega$ and a unit vector $v_z$ such that
\begin{itemize}
 \item $z = \xi_z+t \bcdot v_z$ for some $t \in (0,r_0)$, and
 \item $(\xi_z+\Gamma(v_z, \theta))\cap \Bb_d(\xi_z, r_0) \subset \Omega$.
\end{itemize}
\end{definition}
 
Using the proof of \cite[Lemma 2.3]{BZ2017} or \cite[Proposition~2.5]{FR1987} or \cite[Proposition~2.3]{M1993}, one can establish the following: 

\begin{lemma}\label{lem:int_cone}
Let $\Omega$ be a  domain in $\Cb^d$ that satisfies a local interior-cone condition. Then, for each $x\in \partial\Omega$ and any specified $\overline{\Omega}$-open neighborhood $U$ of $x$ such that $U\Subset \Cb^d$, Condition~\ref{item:dist} in the definition of a local Goldilocks point is satisfied.
\end{lemma}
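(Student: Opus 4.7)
The plan is to adapt the classical arguments of \cite[Lemma~2.3]{BZ2017}, \cite[Proposition~2.5]{FR1987}, and \cite[Proposition~2.3]{M1993} to the local setting: at each $z \in \Omega \cap U$ sufficiently close to $\partial\Omega$, slice $\Omega$ along the complex affine line through $\xi_z$ in the direction $v_z$, and apply a planar-sector Kobayashi estimate. First I fix $x \in \partial\Omega$, a neighborhood $U$ with $U\Subset\Cb^d$, and $z_0 \in \Omega$. Choose $R>0$ with $U \subset \Bb_d(0,R)$, and apply the local interior-cone condition with this $R$ to obtain constants $r_0>0$, $\theta\in(0,\pi)$, and a compact set $K_R \subset \Omega$. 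Set
\begin{equation*}
K^* := K_R \cup \bigl\{ w \in \overline{U} \cap \Omega : \delta_\Omega(w) \geq \tfrac{r_0}{2}\sin(\theta/2) \bigr\};
\end{equation*}
then $K^*$ is a compact subset of $\Omega$ (using $U \Subset \Cb^d$), so $M_1 := \sup_{w \in K^*} K_\Omega(z_0, w) < \infty$. The desired bound holds on $K^* \cap U$ by taking $C \geq M_1$, so it suffices to prove it for $z \in (U \cap \Omega) \setminus K^*$.

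For such $z$, the cone condition supplies $\xi_z \in \partial\Omega$, a unit vector $v_z$, and $t \in (0,r_0)$ with $z = \xi_z + t v_z$ and $(\xi_z + \Gamma(v_z, \theta)) \cap \Bb_d(\xi_z, r_0) \subset \Omega$. A short Euclidean calculation (the foot of the perpendicular from $z$ to the lateral boundary of the cone lies at distance $t\cos(\theta/2) < r_0$ from $\xi_z$, hence inside $\Bb_d(\xi_z,r_0)$) yields $\delta_\Omega(z) \geq t \sin(\theta/2)$. Define the affine holomorphic map $\iota_z(\zeta) := \xi_z + \zeta v_z$. Then $\iota_z$ embeds the planar sector
\begin{equation*}
S_0 := \bigl\{ \zeta \in \Cb : \Real \zeta > \cos(\theta/2)\,|\zeta|,\ |\zeta| < r_0 \bigr\}
\end{equation*}
into $\Omega$. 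Setting $w_z := \iota_z(r_0/2) = \xi_z + \tfrac{r_0}{2} v_z$ and applying the contracting property of $K_\Omega$ under $\iota_z$,
\begin{equation*}
K_\Omega(z, w_z) \leq K_{S_0}(t, r_0/2) \leq \alpha_0 \log(1/t) + C_0,
\end{equation*}
where the second inequality is the standard planar-sector estimate (obtained from the conformal map $\zeta \mapsto \zeta^{\pi/\theta}$ of $S_0$ onto a bounded half-disk in $\Hb$, followed by a Cayley transform to $\Db$), with $\alpha_0, C_0$ depending only on $r_0, \theta$.

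Finally, the family of auxiliary points satisfies $\|w_z\| \leq R + r_0$ and $\delta_\Omega(w_z) \geq (r_0/2)\sin(\theta/2)$ uniformly in $z$, so $\{w_z\}$ lies in a fixed compact subset of $\Omega$; hence $M_2 := \sup_z K_\Omega(z_0, w_z) < \infty$. Combining with $t \leq \delta_\Omega(z)/\sin(\theta/2)$,
\begin{equation*}
K_\Omega(z_0, z) \leq K_\Omega(z_0, w_z) + K_\Omega(z, w_z) \leq M_2 + C_0 + \alpha_0 \log(1/\sin(\theta/2)) + \alpha_0 \log(1/\delta_\Omega(z)),
\end{equation*}
which is the required estimate with constants depending only on $z_0$ and $U$. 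The main obstacle is not the planar sector estimate itself (classical) but the bookkeeping of uniform constants over the one-parameter family of cones indexed by $z$: both the apex $\xi_z$ and the direction $v_z$ vary with $z$, and one must confirm that the family of reference points $\{w_z\}$ remains in a single compactum of $\Omega$. This is exactly where the hypothesis $U\Subset\Cb^d$ is used; without it the $\xi_z$'s could escape to infinity and $M_2$ might fail to be finite.
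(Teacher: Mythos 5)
Your proposal follows exactly the route the paper intends: the paper's ``proof'' of this lemma is precisely the instruction to run the argument of \cite[Lemma~2.3]{BZ2017} (equivalently \cite{FR1987, M1993}) after choosing $R$ so large that $U \Subset \Bb_d(0,R)$ and then using the cone data $(r_0, \theta, K)$ attached to that $R$. Your decomposition into a compact piece $K^*$ plus the cone-accessible piece, the affine slice $\iota_z$ onto the planar sector $S_0$, the sector estimate via $\zeta \mapsto \zeta^{\pi/\theta}$, and the observation that the reference points $w_z = \xi_z + (r_0/2)v_z$ stay in a fixed compactum of $\Omega$ (this is where $U \Subset \Cb^d$ enters) are that argument written out in full. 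So the approach is the right one and essentially complete.

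One step is mis-justified, though the conclusion survives. To pass from $K_\Omega(z, w_z) \leq \alpha_0 \log(1/t) + C_0$ to a bound in terms of $\log(1/\delta_\Omega(z))$ you need an \emph{upper} bound on $\log(1/t)$, i.e.\ a lower bound on $t$ of the form $t \geq c\,\delta_\Omega(z)$. The inequality you invoke, $t \leq \delta_\Omega(z)/\sin(\theta/2)$ (equivalently $\delta_\Omega(z) \geq t\sin(\theta/2)$), goes the wrong way: it bounds $\log(1/t)$ from \emph{below}. The inequality actually needed is the trivial one $\delta_\Omega(z) \leq \norm{z - \xi_z} = t$, which holds because $\xi_z \in \partial\Omega$; with it, $\log(1/t) \leq \log(1/\delta_\Omega(z))$ and your final display follows (the term $\alpha_0\log(1/\sin(\theta/2))$ then becomes superfluous). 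Incidentally, the bound $\delta_\Omega(z) \geq t\sin(\theta/2)$ is itself not quite right as stated: the set guaranteed to lie in $\Omega$ is the \emph{truncated} cone $(\xi_z + \Gamma(v_z,\theta)) \cap \Bb_d(\xi_z, r_0)$, whose boundary includes a spherical cap, so one only gets $\delta_\Omega(z) \geq \min\{t\sin(\theta/2),\, r_0 - t\}$. Since that inequality is not needed anywhere in this lemma, nothing is lost, but it should be deleted or corrected rather than left as the stated justification of the last step.
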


The only adaptation of the proof of \cite[Lemma 2.3]{BZ2017} that is required to prove the above is\,---\, having fixed a point $x\in \partial\Omega$ and an $\overline\Omega$-open neighborhood $U$ of $x$\,---\,to choose $R > 0$ so large that $U\Subset \Bb_d(0,R)$, following which the latter proof can be followed verbatim using the $\theta > 0$ and the compact $K$ determined by this $R$.

\subsection{Planar domains}\label{ssec:planar}
The focus of this section is a pair of constructions of domains with rough boundaries.

\begin{definition}\label{def:ext_cone_cond}
Let $\Omega$ be a domain in $\Cb$. We say that $\Omega$ satisfies a \emph{local exterior-cone condition} if for each $R > 0$ there exist constants  $r_0 > 0$, $\theta\in (0, \pi)$ such that for each $x \in \partial \Omega \cap \Bb(0,R)$, there exists a unit vector $v_x$ such that
$$
(x+\Gamma(v_x, \theta))\cap \Bb(x, r_0) \subset \Cb \setminus \Omega.
$$
\end{definition}

\begin{remark}\label{rem:about_Lip_dom}
This is a continuation of Remark~\ref{rem:about_C0_Lip_dom}. Concerning the pleasant properties mentioned in Remark~\ref{rem:about_C0_Lip_dom} that Lipschitz domains possess: elementary arguments show that Lipschitz domains satisfy a local interior-cone condition (in the sense of Definition~\ref{def:cone_cond}) and a local exterior-cone condition.
\end{remark}  

\begin{lemma}\label{lem:ext_cone}
Let $\Omega  \varsubsetneq \Cb$ be a domain that satisfies a local exterior-cone condition. Then, for each $R > 0$ there exists $c > 0$ such that if $z \in \Omega \cap \Bb(0,R)$ and $v \in \Cb$, then 
\begin{align*}
k_\Omega(z;v) \geq \frac{c\abs{v}}{\delta_\Omega(z)}.
\end{align*}
\end{lemma}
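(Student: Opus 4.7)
The plan is to use that on a hyperbolic planar domain the infinitesimal Kobayashi metric coincides with the Poincar{\'e} metric, and that the exterior-cone condition lets me enclose $\Omega$, locally at a nearest boundary point, in an explicit simply connected slit plane to which the Koebe $\tfrac{1}{4}$-theorem applies. First note that $\Omega$ is automatically Kobayashi hyperbolic: since $\Omega \varsubsetneq \Cb$, $\partial\Omega$ is non-empty, and the exterior-cone condition forces $\Cb \setminus \Omega$ to contain a $2$-dimensional cone near every boundary point, so $\Cb \setminus \Omega$ has more than two points. Fix $R > 0$, let $r_0, \theta$ be the constants supplied by Definition~\ref{def:ext_cone_cond} at the level $R+1$, and assume $r_0 \leq 1$. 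The argument splits along the size of $\delta_\Omega(z)$.

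For the main case $\delta_\Omega(z) < r_0/2$, pick a nearest boundary point $x \in \partial\Omega$ to $z$, so $x \in \Bb(0, R+1)$ and the exterior-cone condition at $x$ provides a unit vector $v_x$. After translating and rotating I may assume $x = 0$ and $v_x = 1$, giving $\Gamma(1,\theta) \cap \Bb(0,r_0) \subset \Cb \setminus \Omega$; by openness of $\Omega$ the closure of this cone also lies in $\Cb \setminus \Omega$, so $[0, r_0] \subset \Cb \setminus \Omega$ and hence $\Omega \subset U := \Cb \setminus [0, r_0]$, which is simply connected. Moreover, the two boundary rays of the exterior cone also lie in $\Cb \setminus \Omega$, and since $0$ is a nearest boundary point to $z$, no point on these rays can be strictly closer to $z$ than $\delta_\Omega(z) = |z|$; a direct distance computation shows this forces $\arg z \in [\pi/2 + \theta/2,\; 3\pi/2 - \theta/2]$. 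In particular $\Real(z) < 0$, so the nearest point of $[0, r_0]$ to $z$ is $0$, which gives $\delta_U(z) = |z| = \delta_\Omega(z)$. Since $\Omega \subset U$ I have $k_\Omega(z;v) \geq k_U(z;v)$, and the Koebe $\tfrac{1}{4}$-theorem applied to a Riemann map $f : \Db \to U$ with $f(0) = z$ yields $|f'(0)| \leq 4\delta_U(z) = 4\delta_\Omega(z)$, so $k_U(z;v) = |v|/|f'(0)| \geq |v|/(4\delta_\Omega(z))$.

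For the remaining case $\delta_\Omega(z) \geq r_0/2$, the set $E_R := \Omega \cap \overline{\Bb(0,R)} \cap \{w : \delta_\Omega(w) \geq r_0/2\}$ is closed in $\Cb$, bounded, and at positive distance from $\partial\Omega$, hence compact in $\Omega$; continuity and positivity of the Poincar{\'e} density on $\Omega$ give a constant $c_0 = c_0(R) > 0$ with $k_\Omega(w;v) \geq c_0 |v|$ on $E_R$. Because $\delta_\Omega(z) \geq r_0/2$ there, this rewrites as $k_\Omega(z;v) \geq (c_0 r_0/2) |v|/\delta_\Omega(z)$, and combining the two cases proves the lemma with $c = \min(c_0 r_0/2,\; 1/4)$. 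The main obstacle is establishing the angular constraint on $\arg z$ in the main case: without it, $\delta_U(z)$ could strictly exceed $\delta_\Omega(z)$ and the Koebe estimate would not translate into a bound in terms of $\delta_\Omega(z)$. This is the unique point in the argument where the full exterior-cone geometry, rather than just the existence of a single exterior segment or disk, is essential.
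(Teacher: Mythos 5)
Your proof is correct, but it reaches the key lower bound by a different route than the paper. The paper splits on whether $\Bb(0,2R)\subset\Omega$ and, in the nontrivial case, uses the exterior cone to place a Euclidean disk $\Bb(w,c_1\delta_\Omega(z))\subset\Cb\setminus\Omega$ with $w$ at distance comparable to $\delta_\Omega(z)$ from $z$, then pushes forward by the explicit map $\zeta\mapsto c_1\delta_\Omega(z)/(\zeta-w)$ into $\Db$ and invokes the distance-decreasing property; no Riemann map or Koebe theorem is needed. You instead split on the size of $\delta_\Omega(z)$, enclose $\Omega$ in the slit plane $U=\Cb\setminus[x,x+r_0v_x]$, and apply the Koebe $\tfrac14$-theorem, which yields the clean constant $\tfrac14$ in the main case at the price of invoking the Riemann mapping theorem. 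Both arguments are sound. One remark: the angular computation you single out as the crux is actually superfluous. Since $x\in\Cb\setminus U$ you get $\delta_U(z)\leq|z-x|=\delta_\Omega(z)$ for free, and since $\Cb\setminus U=[x,x+r_0v_x]\subset\Cb\setminus\Omega$ you get $\delta_U(z)\geq d(z,\Cb\setminus\Omega)=\delta_\Omega(z)$; hence $\delta_U(z)=\delta_\Omega(z)$ follows from the two trivial inclusions, without locating $\arg(z-x)$. In particular your argument really only uses that a segment of uniform length $r_0$ emanates from the nearest boundary point into $\Cb\setminus\Omega$, which is weaker than the full cone condition (the cone is still needed, as you note, to guarantee $\#(\Cb\setminus\Omega)\geq 2$ cheaply, though even that follows from the segment). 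Your Case 2 (points far from the boundary) is handled by the same compactness argument as the paper's first case, just organized around a different threshold.
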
 
\begin{proof}
First consider the case when $\Bb(0,2R) \subset \Omega$. Since $\#(\Cb \setminus \Omega) \geq 2$, $k_{\Omega}$ is given by a Riemannian metric (this is because, as $\Omega$ is a planar hyperbolic domain, $k_{\Omega}$ coincides with the Poincar\'e metric of $\Omega$). So, as $\overline{\Bb(0,R)} \subset \Omega$ is a compact subset, there exists, by continuity, some $c_0 > 0$ such that 
\begin{align*}
k_\Omega(z;v) \geq c_0\abs{v}
\end{align*}
for all $z \in \overline{\Bb(0,R)}$ and $v \in \Cb$. Hence, if $z \in \Omega \cap \Bb(0,R)$ and $v \in \Cb$, then 
\begin{align*}
k_\Omega(z;v) \geq c_0\abs{v} \geq c_0 R \frac{\abs{v}}{\delta_\Omega(z)}.
\end{align*}
 
Suppose that $\Bb(0,2R) \not\subset \Omega$. Fix $z \in \Omega \cap \Bb(0,R)$. Let $x \in \partial \Omega$ be a point with $\abs{z-x} = \delta_\Omega(z)$. By assumption $\abs{x} \leq 3R$. By the  exterior-cone condition there exist $\theta \in (0,\pi)$, $r_0 > 0$, and a unit vector $u$ such that 
$$(x+\Gamma(u, \theta))\cap \Bb(x, r_0) \subset \Cb \setminus \Omega.$$
Moreover, we can choose $\theta$ and $r_0$ to only depend on $R$. 

Let $t := \min\{ \delta_\Omega(z), r_0/2\}$ and $w := x+ tu$. Then there exists $c_1 \in (0,2)$ which only depends on $r_0,\theta$ such that 
\begin{equation}\label{eq:cone_rel}
\delta_{\Cb \setminus \Omega}(w) \geq c_1 \delta_\Omega(z).
\end{equation}
By this choice of $c_1$,
$$
c_1 \delta_\Omega(z) < \abs{z-w} \leq 2 \delta_\Omega(z),
$$
and $\Bb(w,c_1\delta_\Omega(z)) \subset \Cb \setminus \Omega$. 

Next, consider the holomorphic embedding $f$ given by 
$$
f(\zeta) = \frac{c_1\delta_\Omega(z)}{\zeta-w}, \quad \zeta \in \Omega.
$$
In view of \eqref{eq:cone_rel}, $f : \Omega \rightarrow \Db$. Then 
\begin{align*}
k_\Omega(z;v) \geq k_{\Db}(f(z);f^\prime(z)v) = \frac{\abs{f^\prime(z)}\abs{v}}{1-\abs{f(z)}^2}
&\geq \frac{1}{2}\,\frac{\abs{f^\prime(z)} \abs{v}}{1-\abs{f(z)}}   \\
&= \frac{1}{2}\,\frac{1}{\abs{z\!-\!w}-c_1\delta_\Omega(z)}\,\frac{c_1 \delta_\Omega(z)}{\abs{z\!-\!w}} \abs{v} \\
&\geq   \frac{1}{2}\,\frac{1}{2\delta_\Omega(z)\!-\!c_1\delta_\Omega(z)}\,\frac{c_1 \delta_\Omega(z)}
{2\delta_\Omega(z)} \abs{v}\\
&= \frac{c_1}{4(2\!-\!c_1)\delta_\Omega(z)}\abs{v}. 
\end{align*}
Since $c_1\in (0,2)$ does not depend on $z \in \Omega \cap \Bb(0,R)$, this completes the proof. 
\end{proof}

\begin{proposition}\label{prop:planar}
Let $\Omega \varsubsetneq \Cb$ be a Lipschitz domain
Then, every boundary point of $\Omega$ is a local Goldilocks point. 
\end{proposition}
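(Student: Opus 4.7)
The plan is to verify the two conditions of Definition~\ref{defn:local_GL} by invoking the two cone-based lemmas already in place. Fix $x \in \partial\Omega$ and choose a bounded $\overline{\Omega}$-open neighborhood $U$ of $x$, say $U \Subset \Bb(0,R)$ for some $R > 0$; this $U$ will serve as the witness neighborhood. By Remark~\ref{rem:about_Lip_dom}, $\Omega$ satisfies both a local interior-cone condition and a local exterior-cone condition, so Lemmas~\ref{lem:int_cone} and~\ref{lem:ext_cone} are both available.

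Condition~(\ref{item:dist}) follows immediately from Lemma~\ref{lem:int_cone} applied with this $U$. For condition~(\ref{item:metric}), I would apply Lemma~\ref{lem:ext_cone} with the chosen $R$ to obtain a constant $c > 0$ such that $k_\Omega(z;v) \geq c\abs{v}/\delta_\Omega(z)$ for every $z \in \Omega \cap \Bb(0,R)$ and every $v \in \Cb$. Specializing to unit vectors $v$ and to points $z \in \Omega \cap U$ with $\delta_\Omega(z) \leq r$ gives $1/k_\Omega(z;v) \leq \delta_\Omega(z)/c \leq r/c$, so $M_{\Omega,U}(r) \leq r/c$. Consequently, for any $\epsilon > 0$,
\begin{align*}
\int_0^\epsilon \frac{1}{r}\,M_{\Omega,U}(r)\,dr \;\leq\; \int_0^\epsilon \frac{1}{c}\,dr \;=\; \frac{\epsilon}{c} \;<\; \infty,
\end{align*}
which is condition~(\ref{item:metric}). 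Both defining properties therefore hold for $U$, and $x$ is a local Goldilocks point.

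There is no serious obstacle at this stage: the real content has been absorbed into the supporting lemmas. In particular, Lemma~\ref{lem:ext_cone} relies on the one-variable construction of the embedding $\zeta \mapsto c_1\delta_\Omega(z)/(\zeta - w)$ into $\Db$, which has no direct higher-dimensional analogue. That is the structural reason this proposition is stated for planar Lipschitz domains rather than in $\Cb^d$ with $d \geq 2$.
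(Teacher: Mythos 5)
Your proof is correct and follows essentially the same route as the paper's: Lemma~\ref{lem:ext_cone} gives the bound $M_{\Omega,U}(r)\leq r/c$ for Condition~(\ref{item:metric}), and Lemma~\ref{lem:int_cone} (via the interior-cone condition from Remark~\ref{rem:about_Lip_dom}) gives Condition~(\ref{item:dist}), with $U$ a bounded neighborhood inside $\Bb(0,R)$. The only cosmetic difference is that the paper takes $U=\overline{\Omega}\cap\Bb(0,R)$ outright, and your computation even corrects a harmless typo in the paper (which writes $M_{\Omega,U}(r)\leq cr$ where $r/c$ is meant).
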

\begin{proof}
Fix $x\in \partial\Omega$. Pick an $R > 0$ sufficiently large that $x \in \Bb(0, R)$ and \textbf{fix} it. From Remark~\ref{rem:about_Lip_dom} and from Lemma~\ref{lem:ext_cone}, it follows that if we write
$$
U := \overline{\Omega}\cap \Bb(0; R)
$$
then $U$ is an $\overline{\Omega}$-open neighborhood of $x$ and there exists a constant $c > 0$ such that
$$
k_\Omega(z;v) \geq \frac{c\abs{v}}{\delta_\Omega(z)}
$$
for any $z\in \Omega \cap U$ and for any $v\in \Cb$. This means that $M_{\Omega, U}(r) \leq cr$. Thus, Condition~\ref{item:metric} for $x$ to be a local Goldilocks point is satisfied. As $\Omega$ also satisfies a local interior-cone condition, by Lemma~\ref{lem:int_cone}, Condition~\ref{item:dist} for $x$ to be a local Goldilocks point is satisfied with the above choice of $U\ni x$. Thus, every boundary point of $\Omega$ is a local Goldilocks point.   
\end{proof}

Using Proposition~\ref{prop:planar}, it is easy to construct domains $\Omega \subsetneq \Cb$ with uncountably many ends for which every boundary point of $\partial\Omega$ is a local Goldilocks point. For instance, let $\Tc \subset \Cb$ be a proper embedding of an infinite tree with uncountably many ends; then we can construct a sufficiently small neighborhood, say $\Omega_{\Tc}$, of $\Tc$ such that $\partial \Omega_{\Tc}$ is a locally finite union of smooth curves. By Proposition~\ref{prop:planar}, every point in $\partial\Omega_{\Tc}$ is a local Goldilocks point.

\subsection{Domains in higher dimension with uncountably many ends}
Let us further develop this last construction, making use of its features to construct domains in $\Cb^2$.  

\begin{example} There exists a domain $\Omega \subset \Cb^2$ that is locally a Golidlocks domain such that $\overline{\Omega}$ has uncountably many ends.
\end{example}

\noindent{To construct such domains, we rely on the construction described at the end of Section~\ref{ssec:planar} to obtain a domain $\Omega_0 \varsubsetneq \Cb$ such that 
\begin{enumerate}
\item $0 \notin \overline{\Omega}_0$, 
\item $\Omega_0$ is a simply connected domain,
\item $\Omega_0$ is locally a Goldilocks domain,
\item $\overline{\Omega}_0$ has uncountably many ends, and
\item $\partial \Omega_0$ is a locally finite union of smooth curves. 
\end{enumerate}
Fix a biholomorphism $\varphi: \Db \rightarrow \Omega_0$. By Carath\'eodory's prime end theory\,---\,see Chapter~4 in~\cite{BCD2020}, for instance\,---\,$\varphi$ extends to a continuous homeomorphism $\overline{\Db} \rightarrow \overline{\Omega}_0^{End}$, which we shall also call $\varphi$. Let 
\begin{align*}
A := \varphi^{-1}\left( \overline{\Omega}_0^{End} \setminus \overline{\Omega}_0 \right) \subset \Sb^1.
\end{align*}
Since $\varphi$ is a homeomorphism, we see that $\Sb^1 \setminus A$ is a collection of arcs. We shall now show that the map $\varphi$ is a smooth embedding on $\Db \cup (\Sb^1 \setminus A)$.  It suffices to show that all first-order partial derivatives of $\varphi$ extend continuously to each $\xi \in (\Sb^1 \setminus A)$ and that $D\varphi(\xi)$ is non-singular. This follows from \cite[Theorem~3.5]{P1992}. We clarify that while the latter result, as stated in \cite{P1992}, requires $\varphi$ to be $\Cb$-valued, the arguments behind it just require $\varphi|_{\Db}$ to be a Riemann map and can be
localized to $\Db \cap U_{\xi}$, where $U_{\xi}$ is a neighborhood of $\xi$ ($\xi$ as above) such that $\overline{U}_{\xi} \cap (\Sb^1 \setminus A)$ is compact\,---\,see \cite{W1961} by Warschawski.

Next, consider $\Phi : \Bb_2 \rightarrow \Cb^2$ given by $\Phi(z_1,z_2) := (\varphi(z_1),z_2)$. We claim that $\Omega:=\Phi(\Bb_2)$ is a local Golidlocks domain. Notice that $\Phi$ extends to give a $\Cc^1$-smooth embedding
\begin{align*}
\partial\Bb_2 \setminus (A \times \{0\}) \rightarrow \Cb^2
\end{align*}
and 
\begin{align}\label{eq:bdy_good}
\partial\Omega = \Phi\big( \partial \Bb_2 \setminus (A \times \{0\}) \big).
\end{align}
Since, for each $x\in \partial \Bb_2 \setminus (A \times \{0\})$, there exists a neighborhood $V_x$ of $x$ on which (the extension of) $\Phi$ is a $\Cc^1$-diffeomorphism, we have the following estimates. There exists a neighborhood $\wt{V}_x$ of $x$ such that $\wt{V}_x \Subset V_x$, and a constant $C = C(x) > 1$ such that
\begin{align*}
  (1/C)\,\delta_{\Omega}(\Phi(z)) &\leq \delta_{\Bb_2}(z) \leq C\,\delta_{\Omega}(\Phi(z)), \text{ and } \\
  (1/C)\norm{\Phi^\prime(z)v} &\leq \|v\| \leq C\norm{\Phi^\prime(z)v},
\end{align*}
for all $z\in (\Bb_2 \cap \wt{V}_x)$ and for all $v \in \Cb^2$. Then, it follows from \eqref{eq:bdy_good} and the fact that $\Bb_2$ is a Goldilocks domain that each point in $\partial\Omega$ admits a neighborhood corresponding to which\,---\,in view of the estimates above\,---\,the two conditions in Definition~\ref{defn:local_GL} are satisfied. Thus, each point in $\partial\Omega$ is a local Goldilocks point. Furthermore, by construction, $\overline{\Omega}$ has uncountably many ends. \hfill $\blacktriangleleft$

\subsection{Domains with many boundary singularities} 
\begin{example} There exists a bounded domain $\Omega \subset \Cb^2$ with a subset $S \subset \partial \Omega$ having the following properties: 
\begin{itemize} 
\item $S$ is uncountable and totally disconnected,
\item $\partial \Omega$ is not $\Cc^1$-smooth at any point in $S$, 
\item every point in $\partial \Omega \setminus S$ is a local Goldilocks point. 
\end{itemize} 
\end{example}

\noindent Fix a Jordan curve $J \subset \Cb$ that is not $\Cc^1$-smooth at an uncountable totally disconnected set $J^\prime \subset J$ and where $J \setminus J^\prime$ consists of $\Cc^\infty$ arcs. Then let $\Omega_0 \subset \Cb$ denote the bounded component of $\Cb \setminus J$. One way to construct such an example is to let $J^\prime$ be the standard $\frac{1}{3}$-Cantor set in the real line, then let $\Omega_0 \subset \mathcal{H}:=\{z \in \Cb : {\rm Im}(z) > 0\}$ denote the open hyperbolic convex hull of $J^\prime$: i.e., the smallest open set in $\mathcal{H}$ that is geodesically convex with respect to the hyperbolic metric on $\mathcal{H}$ and whose closure contains $J^\prime$. 

Fix a biholomorphism $\varphi: \Db \rightarrow \Omega_0$. By Carath\'eodory's extension theorem, $\varphi$ extends to a continuous homeomorphism $\overline{\Db} \rightarrow \Omega_0 \cup J$, which we shall also call $\varphi$. Let 
\begin{align*}
A := \varphi^{-1}\left( J^\prime \right) \subset \Sb^1.
\end{align*}
Arguing as in the last example, $\varphi$ is a smooth embedding on $\Db \cup (\Sb^1 \setminus A)$.

Next, consider $\Phi : \Bb_2 \rightarrow \Cb^2$ given by $\Phi(z_1,z_2) := (\varphi(z_1),z_2)$. We claim that $\Omega:=\Phi(\Bb_2)$ has the desired properties. Notice that $\Phi$ extends to give a $\Cc^0$ embedding $\partial \Bb_2 \rightarrow \Cb^2$ which restricts to a $\Cc^1$-smooth embedding on $\partial\Bb_2 \setminus (A \times \{0\})$. We shall also use $\Phi$ to denote this extension. Then let
$$
S  := \Phi( A \times \{0\}) = J^\prime \times \{ 0\}. 
$$
Then $S$ is uncountable and totally disconnected. Further, since 
$$
\partial \Omega \cap (\Cb \times \{0\})= \Phi( \partial \Bb_2) \cap (\Cb \times \{0\}) = J \times \{0\},
$$
$\partial \Omega$ is not $\Cc^1$-smooth at any point in $S$. Finally, since for each $x\in \partial \Bb_2 \setminus (A \times \{0\})$ there exists a neighborhood $V_x$ of $x$ on which (the extension of) $\Phi$ is a $\Cc^1$-diffeomorphism, each point in $\partial \Omega \setminus S$ is a local Goldilocks point (which follows by the same argument as in the last example).  \hfill $\blacktriangleleft$

\subsection{Convex domains}
Known results allow us to identify two classes of convex domains that are locally Goldilocks domains. We first begin with a rather general result.

\begin{proposition}
If $\Omega \subset \Cb^d$ is a Kobayashi hyperbolic convex domain and $(\Omega, K_\Omega)$ is Gromov hyperbolic, then $\Omega$ is a local Golidlocks domain.
\end{proposition}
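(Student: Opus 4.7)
The plan is to verify both conditions of Definition~\ref{defn:local_GL} at every boundary point $x \in \partial\Omega$.

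Condition~\ref{item:dist} should follow from convexity alone. I would fix any $z_0 \in \Omega$ and $r_0 > 0$ with $\overline{\Bb_d(z_0, r_0)} \subset \Omega$. For any $z \in \Omega$, the convex hull $\mathrm{co}\bigl(\overline{\Bb_d(z_0, r_0)} \cup \{z\}\bigr)$ lies in $\overline\Omega$ and contains a truncated cone whose apex can be placed arbitrarily close to $z$. Hence $\Omega$ satisfies a local interior-cone condition in the sense of Definition~\ref{def:cone_cond}. By Lemma~\ref{lem:int_cone}, for every $x \in \partial\Omega$ and every bounded neighborhood $U$ of $x$ in $\overline\Omega$, the bound $K_\Omega(z_0, z) \leq C + \alpha \log(1/\delta_\Omega(z))$ holds on $\Omega \cap U$, verifying Condition~\ref{item:dist}.

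For Condition~\ref{item:metric}, I would fix $x \in \partial\Omega$ and a bounded neighborhood $U$ of $x$ in $\overline\Omega$ (to be refined). For a convex $\Omega$, a classical argument using a supporting real hyperplane at a boundary point yields
\[
k_\Omega(z; v) \,\geq\, \frac{1}{2\,\rho_\Omega(z, v)},\qquad \rho_\Omega(z, v) := \sup\bigl\{\,r > 0 : z + \zeta v \in \Omega \text{ whenever } \zeta \in \Cb,\ \abs{\zeta} < r\,\bigr\},
\]
for every $z \in \Omega$ and every unit $v \in \Cb^d$. (Indeed: at the boundary point $p := z + \rho_\Omega(z, v)\,\zeta_0 v$ with $\abs{\zeta_0}=1$, pick a real supporting hyperplane with unit normal $n$ so that $\Omega \subset \{w : \Real\langle w, n\rangle < c\}$ with $\Real\langle p, n\rangle = c$; push-forward by $\langle\,\cdot\,, n\rangle$ into this half-plane and apply the standard Kobayashi formula.) Consequently
\[
M_{\Omega, U}(r) \,\leq\, 2\sup\bigl\{\,\rho_\Omega(z, v) : z \in \Omega \cap U,\ \delta_\Omega(z) \leq r,\ \norm{v}=1\,\bigr\},
\]
so Condition~\ref{item:metric} is reduced to showing the right-hand side is bounded by $Cr^\alpha$ for some $C, \alpha > 0$ and a suitable $U$.

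The hard part will be this quantitative width estimate, and it is where Gromov hyperbolicity is essential. In qualitative form, \cite[Theorem~1.6]{Z2014} states that Gromov hyperbolicity of $(\Omega, K_\Omega)$ for a convex Kobayashi hyperbolic $\Omega$ precludes non-trivial complex affine disks in $\partial\Omega$. I would upgrade this exclusion to the required width estimate by a rescaling and normal-families argument at $x$, in the spirit of the techniques in \cite{Z2014}: if no bound $\rho_\Omega(z,v) \leq C\delta_\Omega(z)^\alpha$ held on a neighborhood of $x$ for any $C,\alpha > 0$, one could extract a sequence $(z_n, v_n)$ with $z_n \to x$ and $\norm{v_n}=1$ along which $\rho_\Omega(z_n, v_n)$ grows quickly relative to $\delta_\Omega(z_n)$; rescaling $\Omega$ about $z_n$ by the factor $1/\delta_\Omega(z_n)$ (which preserves convexity and, in the limit, Gromov hyperbolicity of the Kobayashi distance), a Montel-type limit of the affine disks $\{z_n + \zeta v_n : \abs{\zeta} < \rho_\Omega(z_n, v_n)\}$ would produce a non-trivial complex affine disk in the boundary of the limiting convex domain, contradicting the version of \cite[Theorem~1.6]{Z2014} applicable in the limit. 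With the resulting estimate $M_{\Omega, U}(r) \leq Cr^\alpha$, integrability of $r^{\alpha-1}$ near $0$ yields Condition~\ref{item:metric}, completing the verification.
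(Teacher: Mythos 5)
The paper's own proof of this proposition is a one-line citation of \cite[Theorem~6.1]{Z2019}, so what you are really attempting is a from-scratch proof of that cited theorem. Your reductions are sound and standard: the supporting-hyperplane bound $k_\Omega(z;v)\geq \norm{v}/(2\rho_\Omega(z,v))$ is correct, and Condition~\ref{item:dist} of Definition~\ref{defn:local_GL} does hold for convex Kobayashi hyperbolic domains (though note that Definition~\ref{def:cone_cond} asks for a cone with apex at a \emph{boundary} point $\xi_z$ and with $z$ on its axis, uniformly in $\theta$ and $r_0$; your cone has its apex near $z$, so you should instead take $\xi_z$ to be the exit point of the ray from $z_0$ through $z$ and cone over $\Bb_d(z_0,r_0)$, or simply invoke the classical convex-domain upper bound for $K_\Omega(z_0,\cdot)$). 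The genuine gap is in the final step, where all of the content lies.

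The proposed rescaling argument does not reach a contradiction. First, a small point: dilating about $z_n$ by $1/\delta_\Omega(z_n)$ keeps $0$ at distance $1$ from the boundary of the rescaled domain, so the limiting affine disk (or line) passes through an \emph{interior} point of the limit domain $\Omega_\infty$; it does not land in $\partial\Omega_\infty$. More seriously, even after arranging a degenerate limit, there is nothing to contradict: uniform Gromov hyperbolicity of a sequence of convex domains does \textbf{not} pass to local Hausdorff limits when the limit degenerates. For example, the domains $\Omega_n=\{(w_1,w_2)\in\Cb^2:\Real(w_1)>\abs{w_2}^2/n\}$ are all affinely equivalent to the Siegel model of $\Bb_2$, hence Kobayashi hyperbolic and Gromov hyperbolic with one and the same constant, yet they converge locally Hausdorff to the non-Kobayashi-hyperbolic half-space $\{\Real(w_1)>0\}$. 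So the phrase ``in the limit, Gromov hyperbolicity of the Kobayashi distance'' is precisely what fails, and \cite[Theorem~1.6]{Z2014} cannot be applied to $\Omega_\infty$. Moreover, the quantitative estimate $\rho_\Omega(z,v)\leq C\delta_\Omega(z)^\alpha$ is strictly stronger than the qualitative absence of boundary disks: smoothly bounded convex domains of infinite line type contain no analytic disks in their boundaries yet violate every such polynomial bound (and, consistently, are not Gromov hyperbolic). Hence no soft Montel/compactness upgrade of the qualitative theorem can work; obtaining the polynomial estimate from Gromov hyperbolicity requires the carefully normalized (non-isotropic) rescalings carried out in \cite{Z2019}, and that is exactly the theorem the paper cites.
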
 

The above follows from Theorem 6.1 in~\cite{Z2019}.  

In certain cases, the requirement that $(\Omega, K_\Omega)$ is Gromov hyperbolic too can be relaxed.

\begin{example}
There exist Kobayashi hyperbolic convex domains in $\Cb^d$, $d \geq 2$, such that $(\Omega, K_\Omega)$ is \textbf{not} Gromov hyperbolic, but $\Omega$ is locally a Goldilocks domain.
\end{example}

\noindent{By Corollary 1.13 in~\cite{Z2019}, if $\Bb_d \subset \Rb^d$ denotes the unit ball and $\Omega:= \Bb_d +i\Rb^d \subset \Cb^d$ then $(\Omega, K_\Omega)$ is not Gromov hyperbolic, but one can easily show that $\Omega$ is locally a Goldilocks domain. More generally, if $C \subset \Rb^d$ is a bounded convex domain with real analytic boundary, then $C+i\Rb^d$ is locally a Goldilocks domain for which the Kobayashi distance is not Gromov hyperbolic. \hfill $\blacktriangleleft$}

\section{Metrical preliminaries}
\subsection{The Kobayashi distance and metric}
In this section, we state a few facts on the connections between the Kobayashi pseudo-distance and the infinitesimal Kobayashi pseudo-metric. It is classical (and follows from the definition) that the function $k_\Omega$ is upper-semicontinuous with respect to the usual topology on $\Omega\times \Cb^d$. Thus, if a curve $\sigma: [a,b] \rightarrow \Omega$ is absolutely continuous (as a map $[a,b] \rightarrow \Cb^d$) then the function $[a,b]\ni t \mapsto k_\Omega(\sigma(t); \sigma^\prime(t))$ is integrable. So, we can define the \emph{length of $\sigma$ with respect to the Kobayashi pseudo-metric} as follows:
\begin{equation}\label{eq:k-length}
\ell_\Omega(\sigma)= \int_a^b k_\Omega(\sigma(t); \sigma^\prime(t)) dt.
\end{equation}

When $\Omega$ is Kobayashi hyperbolic, in which case the Kobayashi pseudo-distance $K_{\Omega}$ is a true distance, $k_{\Omega}$ has the following connections to $K_{\Omega}$:

\begin{result}\label{res:integ_dist}
Let $\Omega \subset \Cb^d$ be a Kobayashi hyperbolic domain.
\begin{enumerate}
\item\label{item:dist_integ_smooth} \cite[Theorem 1]{R1971} For any $z, w \in \Omega$ we have
\begin{multline*}
 K_\Omega(z,w) = \inf \left\{\ell_\Omega(\sigma)\,:\,\sigma\!:\![a,b]
 \rightarrow \Omega \text{ is piecewise } \Cc^1,\right. \\
 \left. \text{ with } \sigma(a)=z, \text{ and } \sigma(b)=w\right\}.
\end{multline*}
\item\label{item:dist_integ_absoc} \cite[Theorem 3.1]{V1989} For any $z, w \in \Omega$ we have
\begin{multline*}
 K_\Omega(z,w) = \inf \left\{\ell_\Omega(\sigma) : \sigma\!:\![a,b]
 \rightarrow \Omega \text{ is absolutely continuous}, \right. \\
 \left. \text{ with } \sigma(a)=z, \text{ and } \sigma(b)=w\right\}.
\end{multline*}
\end{enumerate}
\end{result}

\begin{result}[paraphrasing {\cite[Theorem~2]{R1971}}]\label{res:k-hyper}
Let $\Omega\subset \Cb^d$ be a domain. Then, $\Omega$ is Kobayashi hyperbolic if and only if for
each $z\in \Omega$, there exists a neighborhood $U_z$ of $z$ and a constant $c_z>0$ such that
$k_{\Omega}(w; v)\geq c_z\|v\|$ for every $w\in U_z$ and every $v\in T_w^{1,0}\Omega\cong \Cb^d$.
\end{result}

\subsection{The Hopf--Rinow theorem}
There is a different aspect of the Kobayashi distance that we shall require in this paper. We need a definition that makes sense for any metric space. Given a metric space $(X,d)$, the \emph{length of a continuous curve} $\sigma:[a,b] \rightarrow X$ is defined as
\[
l_d(\sigma) := \sup\left\{ \sum_{i=1}^n d(\sigma(t_{i-1}), \sigma(t_i) ): a = t_0 < t_2 < \dots < t_n=b\right\}.
\]
This gives the induced metric $d_I$ on $X$:
\[
d_I(x,y) = \inf\left\{ l_d(\sigma) : \sigma\!:\![a,b] \rightarrow X \text{ is continuous},
\sigma(a)=x, \text{ and } \sigma(b)=y\right\}.
\]
When $d_I = d$, the metric space $(X,d)$ is called a \emph{length metric space}. For such metric spaces, we have the following characterization of Cauchy completeness:
 
\begin{result}[Hopf--Rinow]\label{res:H-R}
Let $(X,d)$ be a length metric space, and suppose $(X, d)$ is locally compact. Then, the following are equivalent:
\begin{enumerate}
\item $(X,d)$ is a proper metric space: i.e., each bounded set in $X$ is relatively compact. 
\item $(X,d)$ is Cauchy complete. 
\end{enumerate}
Each of the above properties implies that $(X, d)$ is a geodesic space.
\end{result}

See \cite[Chapter~I.3]{BH1999} for a proof of the above version of the Hopf--Rinow theorem.

When a domain $\Omega \subset \Cb^d$ is Kobayashi hyperbolic, the metric space $(\Omega, K_\Omega)$ is a length metric space (by the definition of $K_{\Omega}$). Also, the topology induced by Kobayashi distance is the standard topology on $\Omega$ and so $(\Omega, K_\Omega)$ is locally compact. Thus, from Result~\ref{res:H-R}, we conclude:

\begin{proposition}\label{prop:Hopf_Rinow}
Suppose $\Omega \subset \Cb^d$ is a Kobayashi hyperbolic domain. Then the following are equivalent:
\begin{enumerate}
\item $(\Omega, K_\Omega)$ is a proper metric space. 
\item $(\Omega,K_\Omega)$ is Cauchy complete. 
\end{enumerate}
Each of the above properties implies that $(\Omega, K_{\Omega})$ is a geodesic space.
\end{proposition}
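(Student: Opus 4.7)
The plan is to reduce to the Hopf--Rinow theorem (Result~\ref{res:H-R}) applied to the metric space $(X, d) = (\Omega, K_\Omega)$. To do this, I need to verify its two hypotheses: that $(\Omega, K_\Omega)$ is a length metric space, and that it is locally compact. Both are asserted in the paragraph preceding the proposition, so the proof is essentially a matter of recording why these assertions hold.

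For the length property, I would argue that $(K_\Omega)_I = K_\Omega$, where $(K_\Omega)_I$ is the induced length metric. The inequality $(K_\Omega)_I \geq K_\Omega$ is immediate from the triangle inequality. For the reverse inequality, I would use Result~\ref{res:integ_dist}(\ref{item:dist_integ_absoc}): given $z, w \in \Omega$ and $\varepsilon > 0$, there is an absolutely continuous curve $\sigma : [a, b] \rightarrow \Omega$ from $z$ to $w$ with $\ell_\Omega(\sigma) \leq K_\Omega(z, w) + \varepsilon$. For any partition $a = t_0 < t_1 < \cdots < t_n = b$, the upper bound in Result~\ref{res:integ_dist} applied on each subinterval yields
\[
\sum_{i=1}^n K_\Omega(\sigma(t_{i-1}), \sigma(t_i)) \leq \sum_{i=1}^n \int_{t_{i-1}}^{t_i} k_\Omega(\sigma(t); \sigma'(t))\, dt = \ell_\Omega(\sigma),
\]
so $l_{K_\Omega}(\sigma) \leq \ell_\Omega(\sigma) \leq K_\Omega(z, w) + \varepsilon$. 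Letting $\varepsilon \to 0$ gives $(K_\Omega)_I(z, w) \leq K_\Omega(z, w)$.

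For local compactness, I would note that the topology on $\Omega$ induced by $K_\Omega$ agrees with the Euclidean subspace topology: upper-semicontinuity of $k_\Omega$ gives the local upper bound $K_\Omega(z,w) \leq C\|z-w\|$ on small Euclidean balls around each point, while Result~\ref{res:k-hyper} provides, for each $z \in \Omega$, a neighborhood $U_z$ and a constant $c_z > 0$ such that $k_\Omega(w;v) \geq c_z\|v\|$ on $U_z$; integrating along the straight segment joining nearby points yields a local lower bound of the form $K_\Omega(z, w) \geq c_z\|z-w\|$. Since $\Omega$ is open in $\Cb^d$, it is locally compact in the Euclidean topology and hence in the $K_\Omega$-topology.

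With both hypotheses verified, Result~\ref{res:H-R} directly gives the equivalence of properness and Cauchy completeness of $(\Omega, K_\Omega)$, and the fact that either property forces $(\Omega, K_\Omega)$ to be a geodesic space. There is no serious obstacle here; the only mildly technical step is the identification of the two topologies, and even that is standard once Result~\ref{res:k-hyper} is available.
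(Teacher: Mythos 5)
Your proposal is correct and follows exactly the route the paper takes: the paper's "proof" is just the preceding paragraph, which asserts that $(\Omega, K_\Omega)$ is a length metric space and locally compact and then invokes Result~\ref{res:H-R}. You have simply supplied the (correct) details behind those two assertions, using Result~\ref{res:integ_dist}(\ref{item:dist_integ_absoc}) for the length property and Result~\ref{res:k-hyper} plus upper-semicontinuity of $k_\Omega$ for the coincidence of topologies.
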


\subsection{The Gromov boundary}\label{ssec:Gromov_top}
We have assumed in the discussion in Sections~\ref{sec:intro} and~\ref{sec:examples} that the reader is familiar with Gromov hyperbolic metric spaces. However, a brief discussion on the Gromov boundary of a Gromov hyperbolic metric space is in order. Since, for the reasons mentioned in Section~\ref{sec:intro}, we shall not assume that\,---\,given a Kobayashi hyperbolic domain $\Omega \subset \Cb^d$\,---\,$(\Omega, K_{\Omega})$ is Cauchy-complete, the description of the Gromov boundary will be through Gromov sequences (defined below). Even in Section~\ref{ssec:Gromov_visi}, where the domains $\Omega\subset \Cb^d$ turn out to be Kobayashi complete, we cannot initially assume $\Omega$ to be Kobayashi complete. Thus, we shall avoid any mention of geodesic rays in connection with the Gromov boundary. We shall follow the treatment of the Gromov boundary presented in \cite{DSU2017}.

\begin{definition}\label{defn:G_seq}
Let $(X,d)$ be a metric space and, for $x, y, o \in X$, let $\grp{x}{y}{o}$ denote the Gromov product, which is the quantity $\big(d(o, x) + d(o, y) - d(x, y)\big)/2$.
\begin{enumerate}
\item A sequence $(x_n)_{n\geq 1}$ is called a \emph{Gromov sequence} if for some (and hence any) $o\in X$,
$$
  \lim_{m, n\to \infty}\grp{x_m}{x_n}{o} = +\infty.
$$
\item Let $(X, d)$ be Gromov hyperbolic. Two Gromov sequences $(x_n)_{n\geq 1}$ and $(y_n)_{n\geq 1}$ are said to be \emph{equivalent} if for some (and hence any) $o\in X$,
$$
  \lim_{m, n\to \infty}\grp{x_m}{y_n}{o} = +\infty.
$$
That the above relation between $(x_n)_{n\geq 1}$ and $(y_n)_{n\geq 1}$ is an equivalence relation follows from Gromov hyperbolicity of $(X, d)$.
\end{enumerate}
\end{definition}

For the remainder of this section, we shall assume that $(X,d)$ is Gromov hyperbolic. We shall denote the equivalence class of a Gromov sequence $(x_n)_{n\geq 1} \subset X$ by $[(x_n)_{n\geq 1}]$. As a \textbf{set}, the \emph{Gromov boundary} of $X$ is the set of all equivalence classes of Gromov sequences in $X$. As in Section~\ref{sec:intro}, we shall denote the Gromov boundary of $X$ by $\partial_{G}X$.

To describe the topology on $(X \cup \partial_{G}X)$, one must discuss how, given a base-point $o \in X$, $\grp{\bcdot}{\bcdot}{o} : X\times X \to \Rb$ extends  to $(X \cup \partial_{G}X)\times (X \cup \partial_{G}X)$. Since we will not need to explicitly understand the open sets of this topology, we shall not dwell on this extension (the interested reader is referred to \cite[Section~3.4]{DSU2017}). Instead, we present the following result that highlights some features of the topology on $(X \cup \partial_{G}X)$ that are relevant to this paper.

\begin{result}\label{res:Gromov_comptn}
Let $(X,d)$ be a Gromov hyperbolic metric space. Then:
\begin{enumerate}
\item\label{item:compact} If $(X, d)$ is proper and a geodesic space, then $(X \cup \partial_{G}X)$ and $\partial_{G}X$ are compact.
\item\label{item:convg} A sequence $(x_n)_{n\geq 1}$ in $X$ converges to a point $\xi \in \partial_{G}X$ if and only if $(x_n)_{n\geq 1}$ is a Gromov sequence and $[(x_n)_{n\geq 1}] = \xi$.
\end{enumerate}
\end{result}

\section{The end compactification of $\overline\Omega$}\label{sec:end_comp}

We refer the reader to \cite[Chapter~1]{S1970} for the definition of the end compactification of a topological space. Since, for a domain $\Omega\subset \Cb^d$, $\overline\Omega$ admits an increasing sequence of compact subsets $(K_n)_{n \geq 1}$ whose interiors relative to $\overline\Omega$ cover $\overline\Omega$, we shall give the following equivalent definition of $\overline{\Omega}^{End}$, which supports better the intuition behind some of our proofs. Given $\Omega\subset \Cb^d$, let $(K_n)_{n \geq 1}$ be as described above. An \emph{end}, say $e$, of $\overline\Omega$ is a decreasing sequence of connected open sets $\{U^{(e)}_n\}_{n \geq 1}$ such that 
$$
U^{(e)}_n := \text{some connected component of $\overline\Omega\setminus K_n$,} \; \; n=1, 2, 3, \dots\,.
$$
The end compactification of $\overline\Omega$, as a set, is $\overline\Omega$ together with all its ends. In the topology of $\overline{\Omega}^{End}$, for each end $e$, the sets $U^{(e)}_n\cup \{e\}$, $n=1, 2, 3, \dots$, represent open neighborhoods of $e$. This construction does not depend on the choice of the sequence $(K_n)_{n \geq 1}$ (observe that given another increasing sequence of compact subsets $(K^\prime_n)_{n \geq 1}$ whose interiors relative to $\overline\Omega$ cover $\overline\Omega$, there is an obvious bijection between the set of ends given by $(K_n)_{n \geq 1}$ and the set of ends given by $(K^\prime_n)_{n \geq 1}$).    
  
We now present the key lemma of this section. It will be needed in the proof of Theorem~\ref{thm:visible}. For a domain $\Omega\subset \Cb^d$, write $X_{\Omega} = \partial\Omega\setminus \partial_{{ \rm lg}}\Omega$ and let $\clos{X_{\Omega}}$ denote the closure of $X_{\Omega}$ in $ \overline{\Omega}^{End}$.

\begin{lemma}\label{lem:X_disconnected}
If $X_{\Omega}$ is totally disconnected, then $\clos{X_{\Omega}}$ is totally disconnected.
\end{lemma}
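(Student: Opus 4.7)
The plan is to show that for each $\xi \in \clos{X_{\Omega}}$, the intersection of all clopen subsets of $\clos{X_{\Omega}}$ containing $\xi$ equals $\{\xi\}$; since $\clos{X_{\Omega}}$ is a closed subset of the compact Hausdorff space $\overline{\Omega}^{End}$, quasi-components coincide with connected components there, so this yields total disconnectedness. As a preliminary, I would observe that $\partial_{{\rm lg}}\Omega$ is open in $\partial\Omega$, because both conditions defining a local Goldilocks point are preserved when one passes to a smaller $\overline{\Omega}$-neighborhood. Hence $X_{\Omega}$ is closed in $\partial\Omega$ (and thus in $\Cb^d$), and since the subspace topology on $\overline{\Omega}$ inherited from $\overline{\Omega}^{End}$ is the Euclidean one, $\clos{X_{\Omega}} \cap \overline{\Omega} = X_{\Omega}$. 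Consequently $\clos{X_{\Omega}}$ consists of $X_{\Omega}$ together with precisely those ends $e$ for which $U_n^{(e)} \cap X_{\Omega} \neq \emptyset$ for every $n$.

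For a point $\xi \in X_{\Omega}$: since $X_{\Omega}$ is a closed totally disconnected subset of $\Cb^d$, it is locally compact, Hausdorff, and zero-dimensional, so for each $\epsilon > 0$ there exists a compact set $A$ clopen in $X_{\Omega}$ with $\xi \in A \subset X_{\Omega} \cap \Bb_d(\xi,\epsilon)$. Being compact and bounded, $A$ has no end as a limit point in $\overline{\Omega}^{End}$, so $A$ is closed there; and normality of $\Cb^d$ yields an open $V \supset A$ disjoint from the closed set $X_{\Omega} \setminus A$, whence $V \cap \clos{X_{\Omega}} = A$, making $A$ open in $\clos{X_{\Omega}}$ as well. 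Intersecting such $A$ over $\epsilon \searrow 0$ gives $\{\xi\}$.

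For an end $\xi \in \clos{X_{\Omega}}$, for each $n$ I would set $L_n := K_n \cap X_{\Omega}$ and use the local compactness and zero-dimensionality of $X_{\Omega}$ to pick a compact set $L_n^+$ clopen in $X_{\Omega}$ with $L_n \subset L_n^+$. Writing $\{U^{(i)}_n\}_i$ for the connected components of $\overline{\Omega}\setminus K_n$, set $A^{(n)}_i := (X_{\Omega} \cap U^{(i)}_n) \setminus L_n^+$ and $C^{(n)}_i := \overline{A^{(n)}_i}^{End}$. The clopenness of $L_n^+$ in $X_{\Omega}$ combined with the pairwise disjointness of the $U^{(i)}_n$ makes each $A^{(n)}_i$ clopen in $X_{\Omega}$ and forces $C^{(n)}_i \cap \overline{\Omega} = A^{(n)}_i$; moreover, the ends contained in $C^{(n)}_i$ are precisely those $\eta \in \clos{X_{\Omega}}$ for which $U_m^{(\eta)} \subset U^{(i)}_n$ for some $m$ (using connectedness of $U_m^{(\eta)}$ and boundedness of $L_n^+$). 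It follows that $\clos{X_{\Omega}} = L_n^+ \sqcup \bigsqcup_i C^{(n)}_i$ is a disjoint decomposition. If $i_0$ denotes the index with $U^{(i_0)}_n = U_n^{(\xi)}$, then $\xi \in C^{(n)}_{i_0}$, and the nested intersection $\bigcap_n C^{(n)}_{i_0}$ equals $\{\xi\}$: $U_n^{(\xi)}$ eventually avoids any fixed point of $\overline{\Omega}$, and any end $\eta \neq \xi$ has $U_n^{(\eta)} \neq U_n^{(\xi)}$ for $n$ large.

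The main obstacle is to verify that each $C^{(n)}_{i_0}$ is in fact open in $\clos{X_{\Omega}}$, equivalently that the complement $\clos{X_{\Omega}} \setminus C^{(n)}_{i_0} = L_n^+ \cup \bigsqcup_{i \neq i_0} C^{(n)}_i$ is closed in $\overline{\Omega}^{End}$, noting that this may be an infinite union of closed sets. I would establish this by a sequence argument: a convergent sequence from this union either lies eventually in the closed set $L_n^+$ or, after passing to a subsequence, in a single $C^{(n)}_j$ with $j \neq i_0$, in which case its limit lies in the closed set $C^{(n)}_j$; while a sequence that jumps across the pairwise-disjoint open components $\{U^{(i)}_n : i \neq i_0\}$ can only accumulate in $\overline{\Omega}$ on $K_n \cap X_{\Omega} \subset L_n^+$, and no end whose basic neighborhoods lie in $U^{(i_0)}_n$ can be approached by ends or points in disjoint components $U^{(j)}_n$. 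This yields the clopenness of $C^{(n)}_{i_0}$ in $\clos{X_{\Omega}}$, so the quasi-component of $\xi$ is $\{\xi\}$ and the proof is complete.
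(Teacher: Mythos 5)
Your proof is correct. It shares the paper's broad strategy---reduce to clopen separations inside $\clos{X_{\Omega}}$, using that $X_{\Omega}$ is closed in $\overline{\Omega}$, that $\clos{X_{\Omega}}\cap\overline{\Omega}=X_{\Omega}$, and that bounded clopen pieces of $X_{\Omega}$ capture no ends---but the execution differs in two places. First, where the paper's Claim~I reproves zero-dimensionality by hand (covering the compact set $X_{\Omega}\cap\overline{\Bb_d(x,2r)}\setminus\Bb_d(x,r)$ by finitely many separating clopen pairs), you simply cite the fact that a locally compact, Hausdorff, totally disconnected space has a base of compact open sets; that is a legitimate shortcut for the same content. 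Second, and more substantively, for ends the paper's Claim~II separates a given pair by writing $\overline{\Omega}^{End}\setminus K_0$ as a disjoint union of two \emph{open} sets and patching near $K_0$ with finitely many bounded clopen sets from Claim~I; since the separation is phrased with open sets, the (possibly infinite) union over components of $\overline{\Omega}\setminus K_0$ costs nothing. You instead partition $\clos{X_{\Omega}}$ into \emph{closed} pieces $L_n^+\sqcup\bigsqcup_i C^{(n)}_i$ and must then show that an infinite union of closed sets is closed---the obstacle you correctly flag---which you resolve by a sequential argument. That argument is sound (and consistent with the paper's own use of sequences in $\overline{\Omega}^{End}$, each end having the countable neighborhood basis furnished by the $U^{(e)}_n$), but it is avoidable: $C^{(n)}_{i_0}$ is exactly the trace on $\clos{X_{\Omega}}$ of the open set obtained by deleting the compact set $L_n^+$ from the union of $U^{(i_0)}_n$ with the ends it contains, so its relative openness is immediate, as in the paper's formulation. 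A final cosmetic point: the detour through quasi-components and the compact-Hausdorff coincidence of quasi-components with components is not needed in the direction you use it---exhibiting, for each pair of distinct points, a clopen set containing one and not the other already shows the space is totally separated, hence totally disconnected.
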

\begin{proof}
We first note that, by definition, $X_{\Omega}$ is closed in $\overline\Omega$. Thus,
\begin{equation}\label{E:closed}
\clos{X_{\Omega}}\cap \Cb^d = X_{\Omega} = \overline{X}_{\Omega}.
\end{equation}
Given this and the fact that $X_{\Omega}$ is totally disconnected, it is trivial that given distinct points $x, y\in \clos{X_{\Omega}}\cap \Cb^d$, there exist disjoint open subsets $A, B$ of $\clos{X_{\Omega}}$ such that $x\in A$, $y\in B$, and $\clos{X_{\Omega}}=A\cup B$. The proof of this lemma will follow from two claims.
\medskip

\noindent{{\bf Claim~I.} {\em  Given $x\in X_{\Omega}$ and $r>0$, there exist disjoint open subsets $A, B$ of $\clos{X_{\Omega}}$ such that $x \in A$, $A \subset \Bb_d(x,r)$, and $\clos{X_{\Omega}}=A \cup B$.}} 
\smallskip

\noindent{Define
\begin{align*}
K := X_{\Omega} \cap \overline{\Bb_d(x,2r)} \setminus \Bb_d(x,r).
\end{align*}
Since $X_{\Omega}$ is totally disconnected, for each $y \in K$ there exist disjoint open subsets $A_y, B_y$ of $X_{\Omega}$ such that $y \in B_y$, $x \in A_y$, and $ X_{\Omega}=A_y \cup B_y $. Since $K$ is compact we can find $y_1,\dots, y_N\in K$ such that $K \subset \cup_{j=1}^N B_{y_j}$. Then let 
\begin{align*}
A := \Bb_d(x,r) \cap \bigcap_{j=1}^N A_{y_j} \quad \text{and} \quad
B := \left(\clos{X_{\Omega}} \setminus \overline{\Bb_d(x,r)}\right) \cup \bigcup_{j=1}^N B_{y_j}.
\end{align*}
These sets clearly have the properties stated in the conclusion of Claim~I.}
\medskip

\noindent{{\bf Claim~II.} {\em  Given distinct points $x, y\in \clos{X_{\Omega}}\setminus \Cb^d$, there exist disjoint open subsets $A, B$ of $\clos{X_{\Omega}}$ such that $x \in A$, $y\in B$, and $\clos{X_{\Omega}}=A \cup B$.}} 
\smallskip

\noindent{Since $x, y$ are ends, we can fix a compact subset $K_0 \subset \overline{\Omega}$ and disjoint open subsets $U,V$ of $\overline{\Omega}^{End}$ such that $x \in U$, $y\in V$, and $U \cup V = \overline{\Omega}^{End} \setminus K_0$. }

By Claim~I, for each $\xi \in K_0\cap \clos{X_{\Omega}} =K_0 \cap X_{\Omega}$ there exist disjoint open subsets $A_{\xi}, B_{\xi}$ of $\clos{X_{\Omega}}$ such that $\xi \in A_{\xi}$, $A_{\xi}$ is bounded, and $\clos{X_{\Omega}}=A_{\xi} \cup B_{\xi}$. Since $K_0 \cap \clos{X_{\Omega}}$ is compact we can find $\xi_1,\dots, \xi_N\in K_0 \cap \clos{X_{\Omega}} $ such that $K_0\cap \clos{X_{\Omega}} \subset \cup_{j=1}^N A_{\xi_j}$. Then let 
\begin{align*}
A :=\left( U\cap \clos{X_{\Omega}}\right) \cup \bigcup_{j=1}^N A_{y_j}\quad \text{and} \quad
B := \left(V \cap \clos{X_{\Omega}}\right) \cap \bigcap_{j=1}^N B_{z_j}.
\end{align*}
By construction, $A$ and $B$ have the properties stated in the conclusion of Claim~II.

\medskip

In view of \eqref{E:closed}, and of the discussion following it, Claims~I and~II imply that $\clos{X_{\Omega}}$ is totally disconnected.
\end{proof}

\section{Length minimizing curves}\label{sec:min_curves}
This section is dedicated to some facts about $(\lambda, \kappa)$-almost-geodesics (which we had defined in Section~\ref{sec:intro}).
To this end, we need a couple of preliminary lemmas.

The first lemma follows directly from  \eqref{eq:k-length} and the triangle inequality for $K_{\Omega}$.

\begin{lemma}\label{lem:restricted_l}
Let $\Omega \subset \Cb^d$ be a domain and $\sigma: [a,b] \rightarrow \Omega$ an absolutely continuous curve. If 
\begin{align*}
\ell_\Omega(\sigma) \leq K_\Omega(\sigma(a),\sigma(b)) + \epsilon
\end{align*}
then, whenever $a \leq a^\prime \leq b^\prime \leq b$, we have
\begin{align*}
\ell_\Omega(\sigma|_{[a^\prime, b^\prime]}) \leq K_\Omega(\sigma(a^\prime),\sigma(b^\prime)) + \epsilon.
\end{align*}
\end{lemma}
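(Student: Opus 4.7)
The plan is to combine two standard facts: the additivity of the Kobayashi length under splitting the parameter interval, and the fact that the Kobayashi length of any absolutely continuous curve dominates the Kobayashi distance between its endpoints (which, in the form of \eqref{eq:k-length}, is a consequence of Result~\ref{res:integ_dist}). No new machinery is needed beyond the triangle inequality for $K_\Omega$.

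First I would split the integral defining $\ell_\Omega(\sigma)$ at the points $a'$ and $b'$ to obtain
\begin{equation*}
\ell_\Omega(\sigma) \;=\; \ell_\Omega(\sigma|_{[a,a']}) \,+\, \ell_\Omega(\sigma|_{[a',b']}) \,+\, \ell_\Omega(\sigma|_{[b',b]}).
\end{equation*}
Next, I would apply the basic length-distance inequality to the two outer pieces, which gives
\begin{equation*}
\ell_\Omega(\sigma|_{[a,a']}) \,\geq\, K_\Omega(\sigma(a),\sigma(a')) \quad\text{and}\quad \ell_\Omega(\sigma|_{[b',b]}) \,\geq\, K_\Omega(\sigma(b'),\sigma(b)).
\end{equation*}

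Rearranging, and then using the triangle inequality
\begin{equation*}
K_\Omega(\sigma(a),\sigma(b)) \,\leq\, K_\Omega(\sigma(a),\sigma(a')) + K_\Omega(\sigma(a'),\sigma(b')) + K_\Omega(\sigma(b'),\sigma(b)),
\end{equation*}
together with the hypothesis $\ell_\Omega(\sigma) \leq K_\Omega(\sigma(a),\sigma(b)) + \epsilon$, I would estimate
\begin{align*}
\ell_\Omega(\sigma|_{[a',b']})
&= \ell_\Omega(\sigma) - \ell_\Omega(\sigma|_{[a,a']}) - \ell_\Omega(\sigma|_{[b',b]}) \\
&\leq K_\Omega(\sigma(a),\sigma(b)) + \epsilon - K_\Omega(\sigma(a),\sigma(a')) - K_\Omega(\sigma(b'),\sigma(b)) \\
&\leq K_\Omega(\sigma(a'),\sigma(b')) + \epsilon,
\end{align*}
which is exactly the desired conclusion.

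There is no real obstacle here; the only point deserving a remark is that absolute continuity of $\sigma$ on $[a,b]$ implies absolute continuity of each restriction, so each of the three length integrals is well-defined and finite, and additivity of the integral is immediate. Thus the argument is essentially a one-line computation and fits naturally as a preliminary lemma before the existence result for $(1,\kappa)$-almost-geodesics.
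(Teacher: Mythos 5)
Your argument is correct and is exactly the one the paper intends: the authors state that Lemma~\ref{lem:restricted_l} ``follows directly from \eqref{eq:k-length} and the triangle inequality for $K_\Omega$,'' which is precisely your splitting of the length integral, the length-dominates-distance inequality on the outer pieces, and the triangle inequality. No further comment is needed.
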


The second lemma that we need is as follows:

\begin{lemma}\label{lem:euclidean_estimates}
Let $\Omega \subset \Cb^d$ be a Kobayashi hyperbolic domain and let $\sigma: [0, T]\to \Omega$ be of class $\Cc^1$. Then, there exists a constant $C \geq 1$ such that
$$
\frac{1}{C} \norm{\sigma^\prime(t)} \leq k_\Omega(\sigma(t); \sigma^\prime(t)) \leq C \norm{\sigma^\prime(t)}
$$
for every  $t \in [0,T]$.
\end{lemma}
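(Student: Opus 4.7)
The plan is to exploit the compactness of the image $K := \sigma([0,T]) \subset \Omega$ and to establish the two inequalities separately by a covering argument. Since $\sigma$ is $\Cc^1$ on the compact interval $[0, T]$, $K$ is indeed compact in $\Omega$, so in particular $\delta_{\Omega}$ is bounded below by some $\delta_0 > 0$ on $K$.

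For the upper bound, I would use the standard disk-based estimate on the Kobayashi pseudo-metric: for any $z_0 \in \Omega$ and any $v \in \Cb^d \setminus \{0\}$, the holomorphic map $\Db \ni \zeta \longmapsto z_0 + \delta_{\Omega}(z_0)\,\zeta\, v/\norm{v}$ takes values in $\Omega$, so by the defining decreasing property of $k_\Omega$,
\[
k_{\Omega}(z_0; v) \leq \frac{\norm{v}}{\delta_{\Omega}(z_0)}.
\]
Applying this at $z_0 = \sigma(t)$ yields $k_{\Omega}(\sigma(t); \sigma^\prime(t)) \leq \delta_0^{-1}\norm{\sigma^\prime(t)}$ for every $t \in [0, T]$.

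For the lower bound, I would invoke Result \ref{res:k-hyper}: Kobayashi hyperbolicity of $\Omega$ provides, for each $z \in K$, an open neighborhood $U_z \subset \Omega$ and a constant $c_z > 0$ such that $k_{\Omega}(w; v) \geq c_z\norm{v}$ for every $w \in U_z$ and every $v \in \Cb^d$. Since $K$ is compact, finitely many of the $U_z$'s cover $K$; letting $c_0$ be the minimum of the associated constants $c_z$ over this finite cover, we obtain $k_{\Omega}(\sigma(t); \sigma^\prime(t)) \geq c_0\norm{\sigma^\prime(t)}$ uniformly in $t$.

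Taking $C := \max\{\delta_0^{-1},\,c_0^{-1},\,1\}$ then gives the desired two-sided estimate. There is no real obstacle here: both inequalities are obtained by routine compactness arguments applied to $K$, once one invokes Result \ref{res:k-hyper} for the lower bound and the disk-embedding estimate (or, equivalently, the comparison $k_{\Omega}\leq k_{\Bb_d(z_0,\delta_\Omega(z_0))}$) for the upper bound. The only point worth noting is that the hypothesis $\sigma \in \Cc^1$ (rather than merely absolutely continuous) ensures that $\norm{\sigma^\prime(t)}$ is defined at every $t$, so the stated inequalities hold pointwise on all of $[0, T]$.
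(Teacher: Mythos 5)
Your proof is correct and follows essentially the same route the paper indicates: the upper bound via the uniform ball-inclusion/comparison estimate on the compact image $\sigma([0,T])$, and the lower bound via compactness together with Result~\ref{res:k-hyper}. Nothing further is needed.
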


The upper bound is a consequence of a standard comparison argument based on the fact that there exists an $r>0$ such that $\Bb_d(z,r)\subset \Omega$ for every $z\in \sigma([0,T])$. The lower bound is an elementary consequence of compactness and Result~\ref{res:k-hyper}. 

With these lemmas, we can establish the principal result of this section:

\begin{proposition}\label{prop:almost_geod_exist}
Let $\Omega \subset \Cb^d$ be a Kobayashi hyperbolic domain. For any $\kappa > 0$ and $x,y \in \Omega$ there exists a $(1,\kappa)$-almost-geodesic $\sigma:[a,b] \rightarrow \Omega$ with $\sigma(a) =x$ and $\sigma(b)=y$. 
\end{proposition}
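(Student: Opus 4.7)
The strategy is to begin with a nearly length-minimizing curve joining $x$ and $y$, and then reparametrize it by Kobayashi arc-length. Fix $\epsilon \in (0, \kappa)$. By Result~\ref{res:integ_dist}(\ref{item:dist_integ_smooth}), there exists a piecewise $\Cc^1$ curve $\gamma : [0,T] \to \Omega$ with $\gamma(0) = x$, $\gamma(T) = y$, and $\ell_\Omega(\gamma) \leq K_\Omega(x,y) + \epsilon$. After discarding, and then collapsing, the maximal subintervals on which $\gamma$ is constant (neither operation affects $\gamma([0,T])$ nor $\ell_\Omega(\gamma)$), I may further assume that on each smooth piece of $\gamma$ the derivative $\gamma'$ is nowhere zero.

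Since $\gamma([0,T])$ is a compact subset of $\Omega$, Lemma~\ref{lem:euclidean_estimates} applied on each smooth piece yields a constant $C \geq 1$ with $C^{-1}\|\gamma'(t)\| \leq k_\Omega(\gamma(t); \gamma'(t)) \leq C\|\gamma'(t)\|$ wherever $\gamma'$ is defined. Together with the regularity just arranged and continuity of $\gamma'$ on each smooth piece, this shows that $L(s) := \int_0^s k_\Omega(\gamma(u); \gamma'(u))\,du$ is an absolutely continuous, strictly increasing bijection of $[0,T]$ onto $[0, L_\infty]$, where $L_\infty := \ell_\Omega(\gamma)$, whose derivative is bounded above and (on each smooth piece) bounded below away from zero. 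Hence $L^{-1}$ is absolutely continuous, and I define $\sigma : [0, L_\infty] \to \Omega$ by $\sigma(t) := \gamma(L^{-1}(t))$. Then $\sigma$ is absolutely continuous, and by the chain rule together with the homogeneity of $k_\Omega$ in its second argument, $k_\Omega(\sigma(t); \sigma'(t)) = 1$ for a.e.\ $t \in [0, L_\infty]$; in particular the infinitesimal-speed requirement in the definition of a $(1,\kappa)$-almost-geodesic holds.

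It remains to verify the two-sided distance estimate. The upper bound $K_\Omega(\sigma(s), \sigma(t)) \leq |t - s| \leq |t-s|+\kappa$ is immediate from $\ell_\Omega(\sigma|_{[s,t]}) = |t - s|$ and the definition of $K_\Omega$. For the lower bound, note that $\ell_\Omega(\sigma) = L_\infty \leq K_\Omega(\sigma(0), \sigma(L_\infty)) + \epsilon$; Lemma~\ref{lem:restricted_l} then gives, for every $0 \leq s \leq t \leq L_\infty$, the estimate $|t - s| = \ell_\Omega(\sigma|_{[s,t]}) \leq K_\Omega(\sigma(s), \sigma(t)) + \epsilon$, whence $K_\Omega(\sigma(s), \sigma(t)) \geq |t - s| - \epsilon > |t - s| - \kappa$. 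Thus $\sigma$ is the desired $(1,\kappa)$-almost-geodesic from $x$ to $y$ (after an affine shift of parameter).

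The main technical obstacle is the reparametrization step: one must ensure that $L^{-1}$ preserves absolute continuity, for otherwise $\sigma$ may fail to be absolutely continuous and the infinitesimal bound $k_\Omega(\sigma(\cdot); \sigma'(\cdot)) \leq 1$ becomes vacuous. This is precisely why the regularization of $\gamma$ (so that $\|\gamma'\|$, and thus $L'$, is bounded below on each smooth piece) is essential; Lemma~\ref{lem:euclidean_estimates} then transports this quantitative bound from the Euclidean speed to the Kobayashi speed.
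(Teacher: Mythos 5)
Your overall strategy\,---\,take a piecewise $\Cc^1$ curve from Royden's theorem whose Kobayashi length is within $\epsilon$ of $K_\Omega(x,y)$, reparametrize by Kobayashi arc length, and use the two-sided comparison of $k_\Omega$ with the Euclidean norm on the compact image (Lemma~\ref{lem:euclidean_estimates}) in place of the boundedness of $\Omega$\,---\,is exactly the proof the paper intends: it defers to \cite[Proposition~4.4]{BZ2017} and notes that Lemma~\ref{lem:euclidean_estimates} is precisely the substitute for the boundedness hypothesis. Your endgame (the identity $\ell_\Omega(\sigma|_{[s,t]})=|t-s|$ combined with Lemma~\ref{lem:restricted_l}) is also correct.

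There is, however, a genuine gap at the step you yourself single out as the crux. Discarding and collapsing the maximal subintervals on which $\gamma$ is constant does \emph{not} ensure that $\gamma'$ is nowhere zero on each smooth piece: the derivative of a $\Cc^1$ curve can vanish at a point without the curve being constant on any neighboring interval (e.g.\ $t\mapsto p+t^3v$ at $t=0$), and it can even vanish on a nowhere dense set of positive measure (take $\gamma(t)=p+\bigl(\int_0^t\chi(u)\,du\bigr)v$ with $\chi\geq 0$ continuous and vanishing exactly on a fat Cantor set; this curve is constant on no subinterval). In the latter situation $L'=k_\Omega(\gamma;\gamma')$ vanishes on a set of positive measure, $L$ maps that set to a null set, and $L^{-1}$ is genuinely \emph{not} absolutely continuous\,---\,so the assertion ``hence $L^{-1}$ is absolutely continuous,'' on which you hang both the absolute continuity of $\sigma$ and the a.e.\ identity $k_\Omega(\sigma;\sigma')=1$, fails for admissible choices of $\gamma$. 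The gap is repairable in two standard ways: (i) before reparametrizing, replace $\gamma$ by an inscribed piecewise-affine curve whose Kobayashi length exceeds $\ell_\Omega(\gamma)$ by at most $\epsilon$ (possible via the upper bound of Lemma~\ref{lem:euclidean_estimates} and uniform continuity on a compact neighborhood of the image); after deleting degenerate segments, $\norm{\gamma'}$ is bounded below and your argument runs verbatim; or (ii) bypass $L^{-1}$ entirely by observing that $\norm{\gamma'}\leq C\,k_\Omega(\gamma;\gamma')=C\,L'$ gives $\norm{\sigma(t)-\sigma(s)}\leq C\,|t-s|$ directly, so $\sigma$ is Lipschitz in any case, and the chain-rule computation of $k_\Omega(\sigma(t);\sigma'(t))$ need only be performed at the a.e.\ point $t$ for which $L'(L^{-1}(t))$ exists and is positive\,---\,which suffices because $L$, being absolutely continuous, maps $\{L'=0\}$ and the non-differentiability set to null sets. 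As written, though, the regularization claim is false and the proof does not close.
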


The proof of the above result is largely the same as the proof of \cite[Proposition~4.4]{BZ2017}, which relies on the conclusions of Result~\ref{res:integ_dist}. For one set of estimates for which one needs the boundedness of $\Omega$ in the proof of \cite[Proposition~4.4]{BZ2017}, we must now appeal to Lemma~\ref{lem:euclidean_estimates}. Since, barring the latter step, the proof of Proposition~\ref{prop:almost_geod_exist} is so similar to the proof of \cite[Proposition~4.4]{BZ2017}, we shall skip the proof.

\begin{proposition}\label{prop:limits_of_geodesic_rays} 
Let $\Omega \subset \Cb^d$ be a visibility domain. Let $\lambda \geq 1$ and $\kappa \geq 0$. Suppose $(\sigma_n)_{n \geq 1}$, $\sigma_n : [0,b_n] \rightarrow \Omega$, is a sequence of $(\lambda, \kappa)$-almost-geodesics converging locally uniformly to a $(\lambda, \kappa)$-almost-geodesic $\sigma:[0,\infty) \rightarrow \Omega$. Then the two limits below exist in $\partial\overline{\Omega}^{End}$ and 
$$
\lim_{t \rightarrow \infty} \sigma(t) = \lim_{n \rightarrow \infty} \sigma_n(b_n).
$$ 
\end{proposition}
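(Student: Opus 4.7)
The plan is to establish, in turn, that $\xi := \lim_{t \to \infty} \sigma(t)$ exists in $\partial\overline{\Omega}^{End}$, and that every subsequential limit of $(\sigma_n(b_n))_{n\geq 1}$ in $\overline{\Omega}^{End}$ must equal $\xi$. Both steps will rest on the same mechanism: visibility forces restrictions of $(\lambda,\kappa)$-almost-geodesics joining two separated boundary regions to pass through a fixed compact subset of $\Omega$, which is incompatible with the $K_{\Omega}$-divergence built into any such almost-geodesic.

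First I would record the basic observation that $\sigma$ eventually leaves every compact subset of $\Omega$: if $\sigma(t_k)\in L\subset\Omega$ for $t_k\to\infty$ and some compact $L$, then after passing to a subsequence $\sigma(t_k)\to z\in L$ in the usual (equivalently $K_\Omega$) topology, which forces $K_\Omega(\sigma(t_k),\sigma(t_\ell))$ to stay bounded, contradicting $K_\Omega(\sigma(t),\sigma(s))\geq \lambda^{-1}|t-s|-\kappa$. In the compact space $\overline{\Omega}^{End}$, every cluster point of $\{\sigma(t):t\to\infty\}$ therefore lies in $\partial\overline{\Omega}^{End}$. If two cluster points $\xi\neq\eta$ existed, I would take sequences $t_k<s_k\to\infty$ with $\sigma(t_k)\to\xi$, $\sigma(s_k)\to\eta$, pick open neighborhoods $V_\xi, V_\eta$ with disjoint closures in $\overline{\Omega}^{End}$, and apply visibility to $\sigma|_{[t_k,s_k]}$ to obtain a fixed compact $K\subset\Omega$ and $\tau_k\in[t_k,s_k]$ with $\sigma(\tau_k)\in K$; but $\tau_k\to\infty$, contradicting the basic observation. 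Hence $\xi$ exists.

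For the second step, locally uniform convergence forces $b_n\to\infty$, and the divergence argument above also shows that no subsequential limit of $(\sigma_n(b_n))$ can lie in $\Omega$: if $\sigma_{n_k}(b_{n_k})\to\eta\in\Omega$, then fixing any $T>0$ gives $K_\Omega(\sigma_{n_k}(T),\sigma_{n_k}(b_{n_k}))\to K_\Omega(\sigma(T),\eta)<\infty$ via locally uniform convergence, contradicting $\lambda^{-1}(b_{n_k}-T)-\kappa\to\infty$. Suppose, for contradiction, that some subsequence (still indexed by $n$) satisfies $\sigma_n(b_n)\to\eta\in\partial\overline{\Omega}^{End}$ with $\eta\neq\xi$. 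I would pick $V_\xi, V_\eta$ with disjoint closures, let $K\subset\Omega$ be the compact set furnished by visibility, and shrink $V_\xi$ (using Hausdorffness of $\overline{\Omega}^{End}$ together with compactness of $K\subset\Omega$) to arrange $V_\xi\cap K=\emptyset$. Fix $T_0$ with $\sigma(t)\in V_\xi$ for all $t\geq T_0$; locally uniform convergence gives $\sigma_n(T_0)\in V_\xi$ for $n$ large, and $\sigma_n(b_n)\in V_\eta$ by hypothesis, so visibility applied to $\sigma_n|_{[T_0,b_n]}$ produces $\tau_n\in[T_0,b_n]$ with $\sigma_n(\tau_n)\in K$. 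Passing to a further subsequence, either $\tau_n\to\tau_\infty<\infty$, in which case locally uniform convergence gives $\sigma_n(\tau_n)\to\sigma(\tau_\infty)$, so $\sigma(\tau_\infty)\in K$; but $\tau_\infty\geq T_0$ forces $\sigma(\tau_\infty)\in V_\xi$, contradicting $V_\xi\cap K=\emptyset$. Or $\tau_n\to\infty$, in which case compactness of $K$ gives $\sigma_n(\tau_n)\to p\in K\subset\Omega$ along a sub-subsequence, and continuity of $K_\Omega$ yields $K_\Omega(\sigma_n(0),\sigma_n(\tau_n))\to K_\Omega(\sigma(0),p)<\infty$, contradicting $\lambda^{-1}\tau_n-\kappa\to\infty$. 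Either way, $\eta=\xi$, finishing the proof.

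The delicate point I anticipate is the ``two-scale'' choreography in the final step: one must simultaneously arrange that $V_\xi$ be small enough to be disjoint from the compact set $K$ supplied by visibility, and large enough to eventually contain $\sigma_n(T_0)$. The first of these requires separating a point of $\partial\overline{\Omega}^{End}$ from a compact subset of $\Omega$ inside $\overline{\Omega}^{End}$, which ought to follow transparently from the construction of the end compactification recalled in Section~\ref{sec:end_comp}.
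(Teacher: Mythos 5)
Your proof is correct and follows essentially the same strategy as the paper: both steps use visibility to force points of the relevant almost-geodesic segments into a fixed compact set and then contradict the lower bound $K_\Omega(\sigma(s),\sigma(t))\geq\lambda^{-1}|t-s|-\kappa$. The only difference is cosmetic: in the second step the paper pushes the left endpoints $s_j'\to\infty$ via a diagonal argument so that the visibility points automatically escape to infinity, whereas you fix a single $T_0$ and instead dispose of the bounded-$\tau_n$ case by arranging $V_\xi\cap K=\emptyset$; both are valid.
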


\begin{proof}
We begin by showing that $\lim_{t \rightarrow \infty} \sigma(t)$ exists. Suppose this is not so. Since the Kobayashi distance is finite on compact sets, any limit point of $\sigma$ must be in $\partial\overline{\Omega}^{End}$. Then, as $\overline{\Omega}^{End}$ is compact, there exist sequences $(s_n)_{n \geq 1}$ and $(t_n)_{n \geq 1}$ in $[0, \infty)$ such that $s_n, t_n \rightarrow \infty$, and points $\xi_1, \xi_2\in \partial\overline{\Omega}^{End}$ such that 
$$
\lim_{n \rightarrow \infty} \sigma(s_n) = \xi_1 \neq \xi_2 = \lim_{n \rightarrow \infty} \sigma(t_n).
$$
By passing to a subsequence of $(t_n)_{n \geq 1}$ and relabeling if needed, we may assume that $t_n > s_n$ for $n=1, 2, 3, \dots$\,. By the visibility assumption 
$$
r:=\sup_{n \geq 1} K_{\Omega}\left(\sigma(0),  \left. \sigma \right|_{[s_n, t_n]}\right) <+\infty. 
$$
For each $n \geq 1$, let $\tau_n \in [s_n, t_n]$ be such that
$K_{\Omega}(\sigma(0), \sigma(\tau_n)) \leq r$. Then, by definition
$$
\lambda^{-1} \tau_n - \kappa \leq K_{\Omega}(\sigma (0), \sigma(\tau_n)) \leq  r
$$
for each $n \geq 1$, which contradicts the fact that $\tau_n \rightarrow +\infty$. Thus,
$\lim_{t \rightarrow \infty} \sigma(t) =: \xi$ exists in $\partial\overline{\Omega}^{End}$.

Suppose for a contradiction that either $\lim_{n \rightarrow \infty} \sigma_n(b_n)$ does not exist or $\xi \neq \lim_{n \rightarrow \infty} \sigma_n(b_n)$. In either case, there exist $\eta \in \overline{\Omega}^{End}$  and a subsequence $(\sigma_{n_j})_{j \geq 1}$ such that $\sigma_{n_j}(b_{n_j}) \rightarrow \eta$ and $\eta \neq \xi$. 

Let $(U_{\nu})_{\nu \geq 1}$ be a decreasing sequence of $\overline{\Omega}^{End}$-open neighborhoods of $\xi$ such that $\cap_{\nu \geq 1}\overline{U_{\nu}}=\{\xi\}$ (if $\xi \in \partial\overline{\Omega}^{End}\setminus \partial\Omega$, then the existence of $(U_{\nu})_{\nu \geq 1}$ follows from the discussion at the beginning of Section~\ref{sec:end_comp}). For each $\nu$, there exists $s_\nu^\prime$ such that $\sigma(t) \in U_{\nu}$ for every $t \geq s_\nu^\prime$. Further, by assumption, 
$$
\lim_{n \rightarrow \infty} \sigma_n(s_\nu^\prime) = \sigma(s_\nu^\prime). 
$$
So by replacing $(\sigma_{n_j})_{j \geq 1}$ with a subsequence, we can suppose that $s_j^\prime \leq b_{n_j}$ and that $\sigma_{n_j}(s_{j}^\prime) \in U_j$. Then, as $ \cap_{\nu \geq 1}\overline{U_{\nu}}=\{\xi\}$, $\sigma_{n_j}(s_{j}^\prime) \rightarrow \xi$. 

Write $I_j := [s_j^\prime, b_{n_j}]$, $j=1, 2, 3, \dots$\,. By the visibility assumption
$$
\sup_{j \geq 1} K_{\Omega}\left(\sigma(0), \left. \sigma_{n_j}\right|_{I_j}\right) <+\infty.
$$
Let $c_j \in I_j$ be such that
$$
K_{\Omega}(\sigma(0), \sigma_{n_j}(c_j)) = K_{\Omega}\left(\sigma(0), \left. \sigma_{n_j}\right|_{I_j}\right).
$$
By the same argument as in the proof that $\lim_{t \rightarrow \infty} \sigma(t)$ exists, we deduce that $\sup_{j \geq 1} c_j < +\infty$, which contradicts the facts that $s_j^\prime \leq c_j$ and $s_j^\prime \rightarrow +\infty$. By this contradiction we deduce that $\xi = \lim_{n \rightarrow \infty} \sigma_n(b_n)$.

\end{proof}

\begin{proposition}\label{prop:Lipschitz}
Let $\Omega \subset \Cb^d$ be a Kobayashi hyperbolic domain and suppose $x \in \partial_{{ \rm lg}}\Omega$. Let $U$ be a bounded $\overline{\Omega}$-open neighborhood of $x$ in which the two conditions given in Definition~\ref{defn:local_GL} hold true. For each $\lambda \geq 1$ there exists a $M = M(\lambda, U)>0$ such that any $(\lambda, \kappa)$-almost-geodesic $\sigma:I \rightarrow \Omega$ with image in $U\cap \Omega$ is $M$-Lipschitz with respect to the Euclidean distance. 
\end{proposition}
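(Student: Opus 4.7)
The plan is to translate the differential-Kobayashi bound $k_\Omega(\sigma(t); \sigma'(t)) \leq \lambda$ (satisfied almost everywhere by the almost-geodesic) into a Euclidean bound on $\|\sigma'(t)\|$ using the function $M_{\Omega,U}$, and then invoke absolute continuity to promote the pointwise bound to a Lipschitz estimate. More precisely, since $\sigma(t) \in U \cap \Omega$, the definition of $M_{\Omega,U}$ gives, for any nonzero $w\in \Cb^d$,
\begin{equation*}
\|w\| \leq M_{\Omega,U}\bigl(\delta_\Omega(\sigma(t))\bigr)\,k_\Omega(\sigma(t); w);
\end{equation*}
applying this to $w = \sigma'(t)$ at a point of differentiability yields $\|\sigma'(t)\| \leq \lambda\, M_{\Omega,U}(\delta_\Omega(\sigma(t)))$. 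Since $U$ is bounded in $\Cb^d$, there exists $D>0$ with $\delta_\Omega(z) \leq D$ for all $z\in U\cap\Omega$, and monotonicity of $M_{\Omega,U}$ then gives $\|\sigma'(t)\| \leq \lambda\, M_{\Omega,U}(D)$ almost everywhere.

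The substantive step is to verify that $M_{\Omega,U}(D) < \infty$, which I will do by splitting the supremum according to distance to the boundary. First, the integrability condition \eqref{item:metric} in Definition~\ref{defn:local_GL}, combined with the fact that $r\mapsto M_{\Omega,U}(r)$ is non-decreasing, forces $M_{\Omega,U}(r) \to 0$ as $r \searrow 0$ (otherwise the integrand would be bounded below by a positive multiple of $1/r$ near zero), so one can fix $r_0 \in (0, D]$ with $M_{\Omega,U}(r_0) < \infty$. Second, the set
\begin{equation*}
E := \{\,z\in \Omega\cap U : \delta_\Omega(z) \geq r_0\,\}
\end{equation*}
has compact closure in $\Omega$ (it is bounded, as $U$ is, and uniformly separated from $\partial\Omega$); by Result~\ref{res:k-hyper} and a standard compactness argument, there is $c>0$ with $k_\Omega(z; v) \geq c\|v\|$ for all $z\in \overline{E}$ and $v\in \Cb^d$, so the contribution to the supremum defining $M_{\Omega,U}(D)$ coming from $E$ is at most $1/c$. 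Thus $M_{\Omega,U}(D) \leq \max\{M_{\Omega,U}(r_0),\,1/c\} < \infty$.

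Setting $M := \lambda\,M_{\Omega,U}(D)$, I will conclude the proof by invoking absolute continuity: for any $s, t \in I$ with $s < t$,
\begin{equation*}
\|\sigma(t) - \sigma(s)\| = \left\| \int_s^t \sigma'(\tau)\,d\tau \right\| \leq \int_s^t \|\sigma'(\tau)\|\,d\tau \leq M\,|t-s|.
\end{equation*}
There is no real obstacle here; the only subtlety is the splitting argument used to bound $M_{\Omega,U}(D)$, which crucially uses both (1) the boundedness of $U$ (so that away from $\partial\Omega$ we sit in a compact subset of $\Omega$) and (2) the integrability condition (so that near $\partial\Omega$ the function $M_{\Omega,U}$ is controlled, indeed vanishing in the limit).
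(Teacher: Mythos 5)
Your proposal is correct and follows essentially the same route as the paper: both deduce from the integrability condition that $M_{\Omega,U}(r)\to 0$ as $r\searrow 0$ (giving a lower bound $k_\Omega(z;v)\gtrsim\|v\|$ near $\partial\Omega$), handle the set $\{z\in\Omega\cap U:\delta_\Omega(z)\geq r_0\}$ by noting it is relatively compact in $\Omega$ and invoking Kobayashi hyperbolicity for a uniform lower bound there, and then integrate $\|\sigma'\|$ using absolute continuity. The only cosmetic difference is that you package the two bounds as finiteness of $M_{\Omega,U}(D)$ while the paper packages them as a single constant $c_1=\min\{c_0,1\}$ in a lower bound for $k_\Omega$.
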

\begin{proof}
Since $x$ is a local Goldilocks point, it follows from Condition~\ref{item:metric} in Definition~\ref{defn:local_GL} that
$M_{\Omega, U}(r) \rightarrow 0$ as $r \rightarrow 0^+$. Let $\delta_0 > 0$ be such that
\[
0 < M_{\Omega, U}(r) < 1
\]
whenever $r \in (0, \delta_0)$. Then, by definition
\begin{equation}\label{eq:low_bound_1}
k_{\Omega}(z;v) \geq \norm{v} \quad \forall z \in U\cap \Omega \text{ such that $\delta_{\Omega}(z) < \delta_0$,
and $\forall v \in \Cb^d$.}
\end{equation}
Since $U\cap \{z \in \Omega : \delta_{\Omega}(z) \geq \delta_0 \} \Subset \Omega$, a standard comparison argument for $k_{\Omega}$ implies that there exists $c_0 > 0$ such that 
\begin{equation}\label{eq:low_bound_2}
k_{\Omega}(z;v) \geq c_0 \norm{v} \quad \forall z \in U\cap \{z \in \Omega : \delta_{\Omega}(z) \geq \delta_0 \},
\text{ and $\forall v \in \Cb^d$.}
\end{equation}
Write $c_1 := \min\{c_0, 1\}$. Now, fix $\lambda \geq 1$. We claim that every $(\lambda, \kappa)$-almost-geodesic with image in $U \cap \Omega$ is $\lambda/c_1$-Lipschitz (with respect to the Euclidean distance). 

Suppose that $\sigma:I \rightarrow \Omega$ is an $(\lambda, \kappa)$-almost-geodesic with image in $U \cap \Omega$. Then, by \eqref{eq:low_bound_1} and \eqref{eq:low_bound_2}, for almost every $t \in I$ we have
\begin{align*}
\norm{\sigma^\prime(t)} \leq \frac{1}{c_1} k_\Omega(\sigma(t);\sigma^\prime(t)) \leq \frac{\lambda}{c_1}.
\end{align*}
Since $\sigma$ is absolutely continuous we have
\begin{align*}
\sigma(t) = \sigma(s) + \int_s^{t} \sigma^\prime(r) dr.
\end{align*}
Thus 
\begin{align*}
\norm{\sigma(t) - \sigma(s)} = \norm{ \int_s^t \sigma^\prime(r)dr } \leq  \frac{\lambda}{c_1} \abs{t-s},
\end{align*}
whence $\sigma$ is $\lambda/c_1$-Lipschitz. 
\end{proof}

\section{The proof of Theorem~\ref{thm:visible}}

We need a couple of technical lemmas to prove Theorem~\ref{thm:visible}

\begin{lemma}\label{lem:nonconst}
Let $\Omega$ be a domain in $\Cb^d$ and suppose $x^0\in \partial_{{ \rm lg}} \Omega$. Let $U$ be an $\overline{\Omega}$-open neighborhood of $x^0$ in which the two conditions given in Definition~\ref{defn:local_GL} hold true. Let $\lambda \geq 1$ and $\kappa \geq 0$ be constants. Suppose $\gamma_n : [a_n, b_n] \rightarrow \Omega$ are $(\lambda, \kappa)$-almost-geodesics with image in $\Omega\cap U$, $\xi^\prime\neq \eta^\prime$ are distinct points in $U\cap \overline{\Omega}$, and $a \in [-\infty, 0]$, $b \in [0, +\infty]$ are such that $a<b$ and, furthermore, satisfy:
\begin{align*}
  a_n\to a &\text{ and } b_n\to b \; \text{ as $n\to \infty$}, \\
  \gamma_n(a_n)\to \xi^\prime &\text{ and } \gamma_n(b_n)\to \eta^\prime \; \text{ as $n\to \infty$}, \\
  (\gamma_n)_{n \geq 1} &\text{ converges locally uniformly on $(a, b)$ to a curve $\gamma: (a,b) \rightarrow \overline{\Omega}$}.
 \end{align*}
Then $\gamma$ is non-constant.
\end{lemma}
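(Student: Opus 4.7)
\emph{Plan.} The plan is to argue by contradiction: assume $\gamma \equiv p$ on $(a,b)$ for some $p \in \overline{\Omega}$. Since the local Goldilocks conditions persist under shrinking $U$, I may replace $U$ by a bounded $\overline{\Omega}$-open neighborhood of $x^0$; Proposition~\ref{prop:Lipschitz} then supplies a constant $M = M(\lambda, U) > 0$ such that every $\gamma_n$ is $M$-Lipschitz with respect to the Euclidean distance. This uniform Lipschitz control is the main engine of the argument.

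\emph{Pinning down boundary limits at finite endpoints.} If $a > -\infty$, I will pick $(t_k)_{k \geq 1}$ in $(a,b)$ with $t_k \downarrow a$ and apply the Euclidean Lipschitz bound
\[
\|\gamma_n(a_n) - \gamma_n(t_k)\| \,\leq\, M\,|a_n - t_k|.
\]
Sending $n \to \infty$ (using $\gamma_n(a_n) \to \xi'$, $\gamma_n(t_k) \to \gamma(t_k) = p$, and $a_n \to a$) yields $\|\xi' - p\| \leq M(t_k - a)$, and then letting $k \to \infty$ forces $\xi' = p$. Symmetrically, $b < +\infty$ forces $\eta' = p$. If both $a,b$ are finite, I obtain $\xi' = p = \eta'$, contradicting $\xi' \neq \eta'$.

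\emph{At least one infinite endpoint.} By symmetry it suffices to treat $a = -\infty$. Here I combine the almost-geodesic lower bound
\[
K_\Omega(\gamma_n(a_n), \gamma_n(t_0)) \,\geq\, \lambda^{-1}(t_0 - a_n) - \kappa,
\]
whose right-hand side tends to $+\infty$ as $n \to \infty$ for any fixed $t_0 \in (a,b)$, with the Goldilocks upper bound from condition~(\ref{item:dist}) of Definition~\ref{defn:local_GL}, namely $K_\Omega(z_0, z) \leq C + \alpha\log(1/\delta_\Omega(z))$ for $z \in \Omega \cap U$. Triangulating through a fixed $z_0 \in \Omega$,
\[
K_\Omega(\gamma_n(a_n), \gamma_n(t_0)) \,\leq\, 2C + \alpha\log\tfrac{1}{\delta_\Omega(\gamma_n(a_n))} + \alpha\log\tfrac{1}{\delta_\Omega(\gamma_n(t_0))}.
\]
If $p \in \Omega$, then $\delta_\Omega(\gamma_n(t_0)) \to \delta_\Omega(p) > 0$ and one summand on the right stays bounded, so the left-hand divergence cannot be matched; a contradiction. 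A base-point choice adapted to $\xi'$ treats $\xi' \in \Omega$ in the same way.

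\emph{Main obstacle.} The delicate case is $\xi', p \in \partial\Omega$ with $a = -\infty$: the $\log$-terms on the right can grow at a rate comparable to the linear divergence on the left, so condition~(\ref{item:dist}) alone is insufficient. Here the plan is to exploit the full strength of condition~(\ref{item:metric}): the pointwise bound $\|\gamma_n'(t)\| \leq \lambda\, M_{\Omega,U}(\delta_\Omega(\gamma_n(t)))$, combined with the uniform convergence $\delta_\Omega(\gamma_n(t)) \to 0$ on compacta of $(a,b)$, will force $\|\gamma_n'\|$ to vanish uniformly on such compacta. A careful partition of $[a_n, t_0]$ into the portion where $\gamma_n$ is far from $\partial\Omega$ and the portion where it is near, together with the integrability of $M_{\Omega,U}(r)/r$ at $0$, is expected to refine the $\log$-upper bound into one that is strictly sublinear in $|a_n|$, producing the required contradiction with the linear lower bound and the assumption $\xi' \neq \eta'$. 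This refinement is the main technical hurdle of the proof.
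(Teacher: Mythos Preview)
The paper gives no self-contained argument here; it simply points to Claim~2 in the proof of \cite[Theorem~1.4]{BZ2017}, noting that the only change is $M_\Omega \rightsquigarrow M_{\Omega,U}$. So there is no in-paper proof to compare against in detail. Your outline is structurally on target: the uniform Euclidean Lipschitz bound from Proposition~\ref{prop:Lipschitz} correctly pins $\xi'=p$ (resp.\ $\eta'=p$) whenever $a$ (resp.\ $b$) is finite, and your pairing of the almost-geodesic lower bound with condition~(\ref{item:dist}) correctly disposes of the infinite-endpoint case whenever at least one of $p$ or the relevant endpoint limit lies in $\Omega$.

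The genuine gap is exactly where you flag it: $p\in\partial\Omega$, an infinite endpoint, and the corresponding endpoint limit also on $\partial\Omega$. Here your proposal stops at a plan, and the plan as written is not quite right. The phrase ``refine the $\log$-upper bound into one that is strictly sublinear in $|a_n|$'' misframes the difficulty: the two $\log$-terms involve $\delta_\Omega(\gamma_n(a_n))$ and $\delta_\Omega(\gamma_n(t_0))$, whose decay rates are not tied to $|a_n|$ by any hypothesis, so there is no meaningful ``rate in $|a_n|$'' to beat. What actually closes the argument (and what the paper's explicit mention of $M_{\Omega,U}$ signals) is a \emph{coupling} of the two Goldilocks conditions: condition~(\ref{item:dist}) together with the almost-geodesic lower bound forces $\delta_\Omega(\gamma_n(s))$ to decay exponentially in the parameter $s$ relative to its maximum along the segment; feeding this into the a.e.\ bound $\|\gamma_n'(s)\|\le\lambda\,M_{\Omega,U}(\delta_\Omega(\gamma_n(s)))$ of Lemma~\ref{lem:ae_bound} and changing variables converts the Euclidean-length integral $\int\|\gamma_n'\|$ into one dominated by $\int_0^\epsilon M_{\Omega,U}(r)\,r^{-1}\,dr$, which is finite by condition~(\ref{item:metric}). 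This bounds the Euclidean length of $\gamma_n$ on the relevant subinterval independently of $b_n-a_n$, and that is what ultimately contradicts $\|\xi'-\eta'\|>0$. You have named all the right ingredients; what is missing is this coupling, which is precisely the content of the computation in \cite{BZ2017} that the paper is citing.
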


The above lemma is Claim~2 in the proof of \cite[Theorem~1.4]{BZ2017} with the one difference, which is immaterial to the proof, that as the Goldilocks conditions of Definition~\ref{defn:local_GL} are localized to $U$, the function $M_{\Omega,U}$ replaces the function $M_{\Omega}$ in the latter claim.

\begin{lemma}\label{lem:ae_bound}
Let $\Omega$, $x^0\in \partial_{{ \rm lg}} \Omega$, and $U\subset \overline{\Omega}$ be as in Lemma~\ref{lem:nonconst}. Suppose $\gamma: [A,B] \rightarrow U$ is a $(\lambda, \kappa)$-almost-geodesic of $\Omega$ for some $\lambda \geq 1$ and $\kappa \geq 0$. Then,
\begin{align*}
\norm{\gamma^\prime(t)} \leq \lambda M_{\Omega,U}(\delta_{\Omega}(\gamma(t)))
\end{align*}
for almost every $t\in [A,B]$.
\end{lemma}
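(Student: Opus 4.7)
My plan is to observe that this lemma follows from an almost immediate unpacking of the relevant definitions, with no substantive obstacle to navigate. The key observation is that $M_{\Omega,U}$ is defined as a supremum over unit vectors, but $k_\Omega$ is homogeneous of degree one in the second argument, so the bound on $M_{\Omega,U}$ translates directly into a pointwise lower bound of $k_\Omega(z;v)$ by $\|v\|/M_{\Omega,U}(\delta_\Omega(z))$ for any $v \in \Cb^d$ and any $z\in \Omega \cap U$.

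First, since $\gamma$ is a $(\lambda,\kappa)$-almost-geodesic, it is absolutely continuous, so $\gamma'(t)$ exists for almost every $t \in [A,B]$, and moreover the second defining property of an almost-geodesic gives
\begin{equation*}
k_\Omega(\gamma(t); \gamma'(t)) \leq \lambda \quad \text{for a.e. } t \in [A,B].
\end{equation*}

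Second, I would record the following consequence of the definition of $M_{\Omega,U}$: for every $z \in \Omega \cap U$ and every $v \in \Cb^d \setminus \{0\}$, applying the supremum to the unit vector $v/\|v\|$ at radius $r = \delta_\Omega(z)$ yields $1/k_\Omega(z; v/\|v\|) \leq M_{\Omega,U}(\delta_\Omega(z))$, and by homogeneity $k_\Omega(z;v) = \|v\|\,k_\Omega(z; v/\|v\|)$. Rearranging,
\begin{equation*}
\|v\| \leq M_{\Omega,U}(\delta_\Omega(z))\, k_\Omega(z; v),
\end{equation*}
a bound which trivially also holds when $v = 0$.

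Finally, applying this inequality pointwise at $z = \gamma(t) \in \Omega \cap U$ with $v = \gamma'(t)$, and combining with the first step, yields
\begin{equation*}
\|\gamma'(t)\| \leq M_{\Omega,U}(\delta_\Omega(\gamma(t)))\, k_\Omega(\gamma(t);\gamma'(t)) \leq \lambda\, M_{\Omega,U}(\delta_\Omega(\gamma(t)))
\end{equation*}
for almost every $t \in [A,B]$, which is the desired estimate. The hypothesis that $x^0$ is a local Goldilocks point and the first condition in Definition~\ref{defn:local_GL} are not used in this lemma per se; they merely guarantee that $M_{\Omega,U}(r)$ is a meaningful (finite, vanishing as $r\to 0^+$) quantity when this lemma is applied downstream.
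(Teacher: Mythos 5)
Your proof is correct and is exactly the argument the paper has in mind: the paper simply remarks that the inequality ``follows from the definitions'' and points to the corresponding computation inside the proof of Theorem~1.4 of \cite{BZ2017}, which is precisely your unwinding of the definition of $M_{\Omega,U}$ (via homogeneity of $k_\Omega$ in the vector variable) combined with the bound $k_\Omega(\gamma(t);\gamma'(t))\leq\lambda$ from the definition of a $(\lambda,\kappa)$-almost-geodesic. Your closing remark that the local Goldilocks hypothesis is not needed for the inequality itself (only to make $M_{\Omega,U}$ useful downstream) is also accurate.
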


The above inequality just follows from the definitions. A proof of it can be found within the proof of \cite[Theorem~1.4]{BZ2017} (with the one difference that the function $M_{\Omega}$ serves the role of $M_{\Omega,U}$ in the latter proof).

\begin{proof}[The proof of Theorem~\ref{thm:visible}]
Fix constants $\lambda \geq 1$ and $\kappa \geq 0$. Also fix $\xi,\eta \in \partial \overline{\Omega}^{End}$ and $V_\xi, V_\eta$ as in Definition~\ref{defn:visib_dom}. Fix a sequence of compact subsets $(\Ks_n)_{n \geq 1}$ of $\Omega$ such that 
\begin{align*}
\Ks_n & \subset \Ks^{\circ}_ {n+1}, \quad n=1,2,3,\dots, \text{ and } \\
\Omega &= \cup_{n \geq 1}\Ks_n.
\end{align*}
Assume, aiming for a contradiction, that for each $n \geq 1$ there exists a $(\lambda, \kappa)$-almost-geodesic $\sigma_n: [0,T_n] \rightarrow \Omega$ with $\sigma_n(0) \in V_\xi$, $\sigma_n(T_n) \in V_\eta$, and $\sigma_n \cap \Ks_n = \emptyset$. 

Since $C_n:=\sigma_n([0,T_n])$ is a compact subset of $\Cb^d$, passing to a subsequence if needed, we may suppose that $(C_n)_{n\geq 1}$ converges in the local Hausdorff topology to a closed subset $C \subset \Cb^d$. Since $\clos{V_\xi} \cap \clos{V_\eta} = \emptyset$, the set $C$ must be non-empty. Since $\sigma_n \cap \Ks_n = \emptyset$ for each $n$, we must have $C \subset \partial\Omega$. 

\begin{lemma}\label{lem:C_connected}
The closure of $C$ in $ \overline{\Omega}^{End}$ is connected.
\end{lemma}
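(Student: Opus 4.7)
My plan is to pass to the compact Hausdorff space $\overline{\Omega}^{End}$ and run a hyperspace compactness argument. Each $C_n = \sigma_n([0,T_n])$ is a compact connected subset of $\overline{\Omega}^{End}$, because the subspace topology that $\overline{\Omega}$ inherits from $\overline{\Omega}^{End}$ coincides with the Euclidean topology on $\overline{\Omega}$. Appealing to compactness of the Vietoris hyperspace over a compact Hausdorff space, together with the classical fact that the subcollection of non-empty closed connected subsets is itself closed in this hyperspace, I would extract a further subsequence along which $(C_n)$ converges in the Vietoris topology to a non-empty, connected, closed subset $\widehat{C} \subset \overline{\Omega}^{End}$.

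The remaining task is to show $\clos{C} = \widehat{C}$, which immediately forces $\clos{C}$ to be connected. One inclusion is straightforward: if $y \in C$, there exist $y_n \in C_n$ with $y_n \to y$ in the Euclidean topology, and since $\overline{\Omega}$ carries its Euclidean topology inside $\overline{\Omega}^{End}$, the convergence $y_n \to y$ also holds in $\overline{\Omega}^{End}$; this places $y \in \widehat{C}$, and closing $C$ within the closed set $\widehat{C}$ yields $\clos{C} \subset \widehat{C}$. For the reverse inclusion, a point $y \in \widehat{C} \cap \overline{\Omega}$ is again a Euclidean limit of points in $C_n$, so $y \in C \subset \clos{C}$. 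The genuinely new case is when $y \in \widehat{C}$ is an end of $\overline{\Omega}$. Here I would use the connectedness of each $C_n$ in tandem with the nested structure of the basic neighborhoods $U^{(y)}_m$ of $y$: Vietoris convergence guarantees $C_n \cap U^{(y)}_m \neq \emptyset$ for $n$ large, while for $m$ large, the Euclidean approximants in $C_n$ to any fixed $a \in C \subset \partial\Omega \subset \Cb^d$ (which exists because $C$ is non-empty) lie outside $U^{(y)}_m$. Connectedness of $C_n$ then forces a crossing in $\partial U^{(y)}_m$, which sits inside the Euclidean compact $K_m$ used to define the end; extracting a Euclidean subsequential limit along $n$ produces a point of $C$ on $\partial U^{(y)}_m$. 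A diagonal extraction in $m$ then yields a sequence in $C$ that converges to $y$ in the $\overline{\Omega}^{End}$-topology, placing $y \in \clos{C}$.

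The part I expect to require the most care is the final diagonalization: one must select the crossings on each $\partial U^{(y)}_m$ so that they escape every fixed compact of $\overline{\Omega}$ and in fact converge to the specific end $y$, rather than drifting back along portions of $\partial U^{(y)}_m$ that sit in earlier $K_{m'}$. This uses the nesting $U^{(y)}_{m+1} \subset U^{(y)}_m$ and the fact that the crossings being constructed lie on the ``outer'' frontier of $U^{(y)}_m$, since $C_n$ enters $U^{(y)}_m$ from outside. Once this step is handled, $\widehat{C} = \clos{C}$, and connectedness of $\widehat{C}$ transfers to $\clos{C}$.
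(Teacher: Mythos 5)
Your argument is correct, but it takes a genuinely different route from the paper's. The paper argues directly by contradiction: if $\clos{C}$ were covered by two disjoint open sets $\Oc_1,\Oc_2\subset\overline{\Omega}^{End}$, each meeting $\clos{C}$, then for $n$ large the connected set $C_n=\sigma_n([0,T_n])$ meets both $\Oc_1$ and $\Oc_2$, hence contains a point $z_n\notin\Oc_1\cup\Oc_2$; a subsequential limit $z_\infty$ of these points in the compact space $\overline{\Omega}^{End}$ lies in $\clos{C}\setminus(\Oc_1\cup\Oc_2)$, a contradiction. You instead pass to the Vietoris hyperspace of $\overline{\Omega}^{End}$, extract a limit continuum $\widehat{C}$, and identify it with $\clos{C}$. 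What your approach buys is that it isolates, and actually proves, the one delicate point: that an end $y$ arising as a $\overline{\Omega}^{End}$-limit of points of the $C_n$ lies in $\clos{C}$, i.e.\ that each $U^{(y)}_m$ meets $C$ itself and not merely the $C_n$. Your crossing argument does close the gap you flag: the crossings lie in $\partial U^{(y)}_m\subset K_m\setminus {\rm int}(K_m)^{c\,c}$, more precisely $\partial U^{(y)}_m$ consists of limit points of $\overline{\Omega}\setminus K_m$ and so is disjoint from ${\rm int}(K_m)\supset K_{m_0}$ for $m>m_0$, while $\overline{U^{(y)}_m}\subset U^{(y)}_{m_0}\cup K_{m_0}$; together these force $\partial U^{(y)}_m\subset U^{(y)}_{m_0}$ for $m>m_0$, so the limit points you produce on $\partial U^{(y)}_m$ do converge to $y$ and cannot drift back into earlier $K_{m'}$. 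Note that this same fact is exactly what underlies the paper's assertion that $z_\infty\in\clos{C}$ in the case where $z_\infty$ is an end, so the two proofs rest on the same crux; the paper's is shorter because it leaves that step implicit and needs no hyperspace machinery, while yours is heavier but self-contained on precisely that point. (Your further subsequence extraction is harmless, since $C$, and hence $\clos{C}$, is fixed before the lemma is invoked.)
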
 

\begin{proof} Suppose, for a contradiction, that $\clos{C}$, the closure of $C$ in $ \overline{\Omega}^{End}$, is not connected. Then there exist disjoint open sets $\Oc_1, \Oc_2 \subset  \overline{\Omega}^{End}$ such that $\clos{C}$ intersects both $\Oc_1, \Oc_2$, and $\clos{C} \subset \Oc_1 \cup \Oc_2$. Then for $n$ sufficiently large $\sigma_n([0,T_n])$ intersects both $\Oc_1$ and $\Oc_2$. Since $\sigma_n([0,T_n])$ is connected, for every $n$ sufficiently large there exists 
\begin{align*}
z_n \in \sigma_n([0,T_n]) \setminus (\Oc_1 \cup \Oc_2).
\end{align*}
We can find a subsequence $(z_{n_j})_{j\geq 1}$ such that $z_{n_j} \rightarrow z_\infty \in  \overline{\Omega}^{End}$. Then 
\begin{align*}
z_\infty \in \clos{C} \setminus (\Oc_1 \cup \Oc_2),
\end{align*}
and we have a contradiction. Hence $\clos{C}$ is connected. 
\end{proof}

From Lemmas~\ref{lem:X_disconnected} and~\ref{lem:C_connected}, and the fact that $\clos{V_{\xi}}\cap\clos{V_{\eta}} = \emptyset$, there exists $x^0 \in C \cap \partial_{{ \rm lg}} \Omega$. Fix a bounded neighborhood $\Oc$ of $x^0$ that satisfies the Definition~\ref{defn:local_GL}. Recall that $C$ is the limit in the local Hausdorff topology of $(\sigma_n[0, T_n])_{n\geq 1}$. Replacing $(\sigma_n[0, T_n])_{n\geq 1}$ by a subsequence, if needed, we can find, for each $n \geq 1$
\begin{itemize}
\item $t_n\in [0, T_n]$, and
\item an interval $[a_n,b_n] \subset [0,T_n]$ containing $t_n$,
\end{itemize}
such that $\sigma_n(t_n) \rightarrow x^0$, $\sigma_n([a_n,b_n]) \subset \Oc$ for every $n\geq 1$, and
\begin{equation}\label{eq:separated}
\inf_{n \geq 1} \norm{\sigma_n(b_n)-\sigma_n(a_n)} > 0.
\end{equation}
Since $\Oc$ is bounded, there exists an $R>0$ such that
\begin{equation}\label{eq:finite}
\sigma_n([a_n, b_n]) \subset \Bb_d(0,R) \quad \text{for each} \quad n \geq 1.
\end{equation}
By an affine reparametrization of each $\sigma_n$, we may assume that $0\in [a_n, b_n]$ (we will continue to refer to
the reparametrizations as $\sigma_n$) and
\begin{align*}
\delta_\Omega(\sigma_n(0)) = \max\{ \delta_\Omega(\sigma_n(t)) : t \in [a_n,b_n]\}.
\end{align*}
Then, by passing to a subsequence if needed, we can assume $a_n \rightarrow a \in [-\infty,0]$, $b_n \rightarrow b \in [0,\infty]$, $\sigma_n(a_n) \rightarrow \xi^\prime$, and $\sigma_n(b_n) \rightarrow \eta^\prime$. By \eqref{eq:separated}, there exists a constant $\epsilon>0$ such that 
\begin{align*}
\norm{\xi^\prime -\eta^\prime} \geq \epsilon.
\end{align*}

By Proposition~\ref{prop:Lipschitz}, there exists a $M>0$ such that each $\sigma_n$ is $M$-Lipschitz with respect
to the Euclidean distance. Given this fact and the boundedness condition \eqref{eq:finite} we conclude, passing to a further subsequence and relabeling as $(\sigma_n)_{n \geq 1}$, that $\sigma_n|_{(a_n, b_n)}$ converges locally uniformly on $(a,b)$ to a  curve $\sigma:(a,b) \rightarrow \overline{\Omega}$ (we restrict to the open interval because $a$ could be $-\infty$ and $b$ could be $\infty$). Notice that $a \neq b$ because each $\sigma_n$ is $M$-Lipschitz and so
\begin{align*}
0 < \norm{\xi^\prime-\eta^\prime} \leq M\abs{b-a}.
\end{align*}
All the conditions of Lemma~\ref{lem:nonconst}, taking $\gamma_n = \sigma_n|_{[a_n, b_n]}$ and $U = \Oc$, hold true. Thus, $\sigma$ is non-constant.

Recall that 
\begin{align*}
\delta_\Omega(\sigma_n(t)) \leq \delta_\Omega(\sigma_n(0))
\end{align*}
for each $t\in [a_n, b_n]$. Furthermore, as $\sigma_n([a_n, b_n])\subset \Oc$, we have 
\begin{align*}
M_{\Omega, \Oc}(\delta_\Omega(\sigma_n(t))) \leq M_{\Omega, \Oc}(\delta_\Omega(\sigma_n(0)))
\end{align*}
for every $t\in [a_n, b_n]$. Thus $M_{\Omega, \Oc}(\delta_\Omega(\sigma_n(t))) \rightarrow 0$. But now, if $s \leq u$ and $s,u \in (a,b)$ then:
\begin{align*}
 \norm{\sigma(s)-\sigma(u)} 
 = \lim_{n \rightarrow \infty} \norm{\sigma_n(s)-\sigma_n(u)}
 &\leq \limsup_{n \rightarrow \infty} \int_s^u \norm{\sigma_n^\prime(t)} dt\\
 & \leq  \lambda\limsup_{n \rightarrow \infty} \int_u^w\!\!M_{\Omega, \Oc}(\delta_\Omega(\sigma_n(t))) dt = 0.
\end{align*}
The last inequality follows by applying Lemma~\ref{lem:ae_bound} to each $\sigma_n|_{[a_n, b_n]}$
Thus $\sigma$ is constant. But this contradicts the conclusion of the last paragraph. Thus, our assumption above must be false; hence the result.
\end{proof}

\section{Boundary extensions: the proof of Theorem~\ref{thm:extensions}}\label{sec:quasi_isom_extn}
To prove Theorem~\ref{thm:extensions}, we first need to explain one of the terms featured in its hypothesis. Given a domain $\Omega \subset \Cb^d$, we say that \emph{$\Omega$ has well-behaved geodesics} if $\Omega$ is complete Kobayashi hyperbolic (in which case $(\Omega, K_{\Omega})$ is a geodesic space\,---\,see Proposition~\ref{prop:Hopf_Rinow}) and, whenever $(z_n)_{n \geq 1}, (w_n)_{n \geq 1}$ are sequences in $\Omega$ with $\lim_{n \rightarrow \infty} w_n = \lim_{n \rightarrow \infty} z_n=\xi \in \overline{\Omega}^{End} \setminus \Omega$ and $\sigma_n$ is a sequence of geodesic segments joining $w_n$ to $z_n$, we have
$$
\lim_{n \rightarrow \infty} K_\Omega(o, \sigma_n) = +\infty
$$
for some (hence any) $o \in \Omega$.

We also need one lemma. Before stating it, a definition: given constants $\lambda \geq 1$ and $\kappa \geq 0$, a map $\sigma : I \rightarrow \Omega$ of an interval $I \subset \Rb$ is called a \emph{$(\lambda, \kappa)$-quasi-geodesic} if 
$$
\lambda^{-1}\abs{t-s} - \kappa \leq K_\Omega(\sigma(t), \sigma(s)) \leq \lambda\abs{t-s} + \kappa
$$ 
for all $s,t \in I$. In other words, a $(\lambda, \kappa)$-quasi-geodesic is a map that satisfies \textbf{only} the first of the two conditions defining a $(\lambda, \kappa)$-almost-geodesic (see Section~\ref{sec:intro}). So, for $\kappa > 0$, $(\lambda, \kappa)$-quasi-geodesics are not necessarily continuous.

\begin{lemma}\label{lem:approximating_quasi-geodesics} 
Let $\Omega \subset \Cb^d$ be Kobayashi hyperbolic. For any $\lambda_0 \geq 1, \kappa_0 \geq 0$ there exist constants $\lambda \geq 1$, $\kappa \geq 0$, and $r > 0$, depending only on the pair $(\lambda_0, \kappa_0)$ such that the following holds: if $\sigma : [a,b] \rightarrow \Omega$ is a $(\lambda_0,\kappa_0)$-quasi-geodesic, then there exists a $(\lambda,\kappa)$-almost-geodesic $\hat{\sigma} : [\hat{a},\hat{b}] \rightarrow \Omega$ with $\hat{\sigma}(\hat{a})=\sigma(a)$, $\hat{\sigma}(\hat{b})=\sigma(b)$, and such that
\begin{align*}
K_\Omega^{{\rm Haus}}(\sigma, \hat{\sigma}) \leq r.
\end{align*}
Here $K_\Omega^{{\rm Haus}}(A, B)$ denotes the Hausdorff distance with respect to $K_\Omega$ between the sets $A, B\subset \Omega$. 
\end{lemma}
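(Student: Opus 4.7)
The plan is to discretize the quasi-geodesic $\sigma$ along its parameter at integer-spaced points and then stitch together elementary $(1,1)$-almost-geodesic pieces produced by Proposition~\ref{prop:almost_geod_exist}. Specifically, I would set $N:=\lfloor b-a\rfloor$, define $t_i:=a+i$ for $0\le i\le N$ and $t_{N+1}:=b$, and for each $i$ invoke Proposition~\ref{prop:almost_geod_exist} (with $\kappa=1$) to obtain a $(1,1)$-almost-geodesic $\gamma_i:[0,L_i]\to\Omega$ joining $\sigma(t_i)$ to $\sigma(t_{i+1})$. The $(\lambda_0,\kappa_0)$-quasi-geodesic hypothesis and the bound $t_{i+1}-t_i\le 1$ force $L_i\le K_\Omega(\sigma(t_i),\sigma(t_{i+1}))+1\le \lambda_0+\kappa_0+1=:M$, furnishing a uniform cap on the length of each piece. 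I would then set $S_i:=\sum_{k<i}L_k$ and take $\hat\sigma:[0,S_{N+1}]\to\Omega$ to be the concatenation $\hat\sigma(s):=\gamma_i(s-S_i)$ for $s\in[S_i,S_{i+1}]$. By construction $\hat\sigma$ is absolutely continuous, $\hat\sigma(0)=\sigma(a)$, $\hat\sigma(S_{N+1})=\sigma(b)$, and $k_\Omega(\hat\sigma(s);\hat\sigma'(s))\le 1$ almost everywhere.

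Integrating the infinitesimal speed bound yields $K_\Omega(\hat\sigma(s),\hat\sigma(t))\le |s-t|$, which covers the upper inequality in the definition of a $(\lambda,\kappa)$-almost-geodesic. For the lower inequality, I would take $s\le t$ with $s\in[S_i,S_{i+1}]$ and $t\in[S_j,S_{j+1}]$, so $i\le j$. The triangle inequality, combined with the elementary bound $K_\Omega(\hat\sigma(s),\sigma(t_i))\le L_i\le M$ (and the analogous bound at $t$), reduces matters to a lower bound on $K_\Omega(\sigma(t_i),\sigma(t_j))$. The quasi-geodesic hypothesis gives $K_\Omega(\sigma(t_i),\sigma(t_j))\ge \lambda_0^{-1}|t_j-t_i|-\kappa_0$; the book-keeping $|t_j-t_i|\ge (j-i)-1$ (the $-1$ accounts for the possibly short final sub-interval $[t_N,t_{N+1}]$) together with $|t-s|\le (j-i+1)M$ then produces a linear lower bound $(\lambda_0M)^{-1}|t-s|-\kappa$, where $\kappa$ is an explicit function of $(\lambda_0,\kappa_0)$. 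Hence $\hat\sigma$ is a $(\lambda,\kappa)$-almost-geodesic with $\lambda:=\lambda_0M$ and this $\kappa$.

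For the Hausdorff estimate, every $\hat\sigma(s)$ with $s\in[S_i,S_{i+1}]$ lies within $K_\Omega$-distance $L_i\le M$ of the vertex $\sigma(t_i)\in\sigma([a,b])$, while every $\sigma(u)$ with $u\in[t_i,t_{i+1}]$ lies within $K_\Omega$-distance $\lambda_0+\kappa_0\le M$ of $\sigma(t_i)=\hat\sigma(S_i)\in\hat\sigma([0,S_{N+1}])$, so taking $r:=M$ suffices. The only real delicacy is ensuring that all the constants\,---\,$\lambda$, $\kappa$, and $r$\,---\,remain uniform in the input quasi-geodesic $\sigma$; this uniformity is secured by the uniform length-cap $L_i\le M$ on the elementary pieces, which itself hinges on the fixed choice $\kappa=1$ in Proposition~\ref{prop:almost_geod_exist} and on the $(\lambda_0,\kappa_0)$-control of $\sigma$ across sub-intervals of length at most $1$.
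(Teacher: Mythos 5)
Your proposal is correct and follows essentially the same route as the paper: the paper's proof sketch is precisely to pick finitely many suitably spaced parameter values, join consecutive images $\sigma(t_{j-1})$, $\sigma(t_j)$ by $(1,1)$-almost-geodesics supplied by Proposition~\ref{prop:almost_geod_exist} (this being the only point where unboundedness of $\Omega$ requires a change from \cite[Proposition~4.9]{BZ2017}), and concatenate. Your unit parameter spacing, the uniform cap $L_i\le \lambda_0+\kappa_0+1$, and the resulting bookkeeping for $\lambda$, $\kappa$, and $r$ all check out.
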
 

\begin{proof}[Proof sketch] The idea is to pick finitely many points
$$
a = t_0 < t_1 < \dots < t_N = b,
$$
these points being suitably chosen, and let $\hat{\sigma}$ be the curve obtained by joining $\sigma(t_{j-1})$ and $\sigma(t_{j})$ by a $(1,1)$-almost-geodesic, $j=1, \dots, N$; see the proof of \cite[Proposition 4.9]{BZ2017} for details.

The latter proposition differs from Lemma~\ref{lem:approximating_quasi-geodesics} in only one respect: here, $\Omega$ is not necessarily bounded. So, we appeal to Proposition~\ref{prop:almost_geod_exist} for the existence of the above-mentioned $(1,1)$-almost-geodesics. In all other aspects, the present proof is the same as that of \cite[Proposition 4.9]{BZ2017}.  
\end{proof} 

We can now provide

\begin{proof}[The proof of Theorem~\ref{thm:extensions}] We first show that $f$ extends to a naturally-defined map $\overline{\Omega}^{End}_1 \rightarrow\overline{\Omega}^{End}_2$. Fix some $\xi \in \overline{\Omega}^{End}_1\setminus \Omega_1$. We claim that $\lim_{z \rightarrow \xi} f(z)$ exists in $\overline{\Omega}_2^{End}$. Suppose not. Then there exist sequences $(w_n)_{n \geq 1}$ and $(z_n)_{n \geq 1}$ in $\Omega_1$ where $\lim_{n \rightarrow \infty} z_n = \lim_{n \rightarrow \infty} w_n = \xi$ but 
$$
\lim_{n \rightarrow \infty} f(z_n) = \eta_1 \neq \eta_2 = \lim_{n \rightarrow \infty} f(w_n).
$$
Since $f$ is a quasi-isometric embedding, $\eta_1, \eta_2 \in \overline{\Omega}^{End}_2 \setminus \Omega_2$. 

For each $n$, let $\sigma_n$ be a geodesic joining $w_n$ to $z_n$ (which exists since $\Omega_1$ is a complete Kobayashi hyperbolic domain). Then each $f \circ \sigma_n$ is a quasi-geodesic in $\Omega_1$ with constants that are independent of $n$. Fix $b_0 \in \Omega_1$. Since $\Omega_2$ is a visibility domain, Lemma~\ref{lem:approximating_quasi-geodesics} implies that 
$$
\sup_{n \geq 1} K_{\Omega_2}(f(b_0), f \circ \sigma_n) <+\infty. 
$$
Hence 
$$
\sup_{n \geq 1} K_{\Omega_1}(b_0,  \sigma_n) <+\infty,
$$
and we have a contradiction. Hence $f_\infty(\xi) : = \lim_{{\Omega_1}\ni z \rightarrow \xi} f(z)$ exists for every $\xi \in  \overline{\Omega}^{End}_1\setminus\Omega_1$. 

Define
$$
\wt{f}(x) = \begin{cases}
 			f(x), &\text{if $x \in \Omega_1$}, \\
 			f_{\infty}(x), &\text{if $x \in \partial\overline{\Omega_1}^{End}$}.
 			\end{cases}
$$
We claim that $\wt{f}$ is continuous. By the definition of $\wt{f}$, it is easy to see that it suffices to show that $f_\infty : \partial\overline{\Omega}^{End}_1 \rightarrow \partial\overline{\Omega}^{End}_2$ is continuous. Since $\partial\overline{\Omega}^{End}_2$ is compact, it is enough to fix a sequence $(\xi_n)_{n \geq 1}$ in $\partial\overline{\Omega}^{End}_1\setminus \{\xi\}$ where $\xi_n \rightarrow \xi$, $f_\infty(\xi_n) \rightarrow \eta$, and then show that $\eta = f_\infty(\xi)$. Since $\lim_{z \rightarrow \xi_n} f(z)=f_\infty(\xi_n)$, for each $n$ we can find $z_n \in \Omega_1$ sufficiently close to $\xi_n$ such that $z_n \rightarrow \xi$ and $f_\infty(z_n) \rightarrow \eta$. Then 
$$
\eta = \lim_{n \rightarrow \infty} f_\infty(z_n) = \lim_{{\Omega_1}\ni z \rightarrow \xi} f(z)=f_\infty(\xi)
$$
and we are done.
\end{proof}

\section{Kobayashi geometry of planar domains: proof of Theorem~\ref{thm:planar_domains}}
This section and the next will focus on planar domains. The results in these sections are, essentially, consequences of the behavior of the Kobayashi distance and the Kobayashi metric. This section is dedicated to the proof of Theorem~\ref{thm:planar_domains} while the next section will deal with the examples mentioned in Section~\ref{ssec:applications_maps}.

The proof of Theorem~\ref{thm:planar_domains} will require some setting up and a few preliminary lemmas. Fix $\Omega_1, \Omega_2 \subset \Cb$ as in the statement of the theorem.

Since $\Omega_1$ and $\Omega_2$ are Lipschitz domains, for each $\xi \in\partial\Omega_j$ there exist  $\nu_\xi \in \Sb^1$, $\theta_\xi > 0$, $r_\xi > 0$, and a neighborhood $\Oc_\xi \subset \Cb$ of $\xi$ with the following property (recall our notation from Section~\ref{sec:examples} and, in particular, Remark~\ref{rem:about_Lip_dom}): if $x \in \partial \Omega_j \cap \Oc_\xi$, then
$$
(x+\Gamma_x(\nu_\xi, \theta_\xi)) \cap \Bb(x,r_\xi) \subset \Omega_j
$$ 
and 
$$
(x+\Gamma_x(-\nu_\xi, \theta_\xi)) \cap \Bb(x,r_\xi) \subset (\Cb \setminus \Omega_j).
$$
The symbols $\nu_\xi$ and $r_\xi$ and $\theta_\xi$ that will appear, without any further explanation, in the proofs of the lemmas below are as defined above.  

\begin{lemma}\label{lem:lower_bd_planar_ext_pf}
For each $\xi \in \partial \Omega_j$ there exists $a_\xi > 0$ such that 
$$
k_{\Omega_j}(z; v) \geq a_\xi\frac{\abs{v}}{\abs{z-x}}
$$
for every $x \in \Oc_\xi \cap \partial\Omega_j$ and every $z \in \Bb(x, r_\xi/2) \cap \Omega_j$. 
\end{lemma}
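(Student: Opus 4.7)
The strategy is to adapt the argument from Lemma~\ref{lem:ext_cone}, exploiting the fact that the exterior-cone condition holds uniformly for every boundary point in $\Oc_\xi$ with the same parameters $\nu_\xi, \theta_\xi, r_\xi$. The key difference is that in Lemma~\ref{lem:ext_cone} the auxiliary point in the complement was constructed relative to the \emph{nearest} boundary point to $z$, whereas here we must build it relative to the specified point $x \in \Oc_\xi \cap \partial\Omega_j$; this forces us to measure everything in terms of $|z-x|$ instead of $\delta_{\Omega_j}(z)$.

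Fix $\xi$ as in the statement. For $x \in \Oc_\xi \cap \partial\Omega_j$ and $z \in \Bb(x, r_\xi/2) \cap \Omega_j$, set $t := |z-x|/3$ and $w := x - t\nu_\xi$, placing $w$ on the axis of the exterior cone at $x$. Standard cone geometry provides a constant $c_\xi \in (0,1)$, depending only on $\theta_\xi$, such that $\Bb(w, c_\xi t) \subset x + \Gamma_x(-\nu_\xi, \theta_\xi)$. Since $(1+c_\xi)t < 2|z-x|/3 < r_\xi$, this disk also lies inside $\Bb(x,r_\xi)$, so the exterior-cone condition forces $\Bb(w, c_\xi t) \subset \Cb \setminus \Omega_j$.

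Next I would invoke the usual distance-decreasing trick: the map $f : \Omega_j \to \Db$ defined by $f(\zeta) := c_\xi t/(\zeta-w)$ is a well-defined holomorphic map, so
\[
k_{\Omega_j}(z;v) \;\geq\; k_{\Db}(f(z); f'(z)v) \;=\; \frac{|f'(z)||v|}{1-|f(z)|^2} \;\geq\; |f'(z)||v|.
\]
The triangle inequality yields $2|z-x|/3 \leq |z-w| \leq 4|z-x|/3$, so $|f'(z)| = c_\xi t/|z-w|^2 \geq (3c_\xi/16)\,|z-x|^{-1}$. Taking $a_\xi := 3c_\xi/16$ then gives the claimed bound.

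The only delicate point is ensuring that the auxiliary disk $\Bb(w, c_\xi t)$ remains inside $\Bb(x,r_\xi)$, which is where the uniform cone condition actually forces the disk into $\Cb \setminus \Omega_j$; this is arranged by choosing $t$ to be a sufficiently small fraction of $|z-x|$ (the factor $1/3$ above is ample since $|z-x| < r_\xi/2$). Beyond that, the proof is a routine application of the Schwarz--Pick inequality and I do not foresee a substantive obstacle.
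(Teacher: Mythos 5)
Your proposal is correct and follows essentially the same route as the paper: place an auxiliary point $w$ on the axis of the exterior cone at $x$ at distance comparable to $\abs{z-x}$, use the uniform cone parameters to find a disk about $w$ of radius comparable to $\abs{z-x}$ inside $\Cb\setminus\Omega_j$, and then apply the distance-decreasing property to $\zeta \mapsto c_\xi t/(\zeta - w)$. The only (immaterial) differences are your choice $t = \abs{z-x}/3$ in place of the paper's $t=\abs{z-x}$ and your slightly cruder but perfectly adequate bound $1-\abs{f(z)}^2 \leq 1$.
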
 

\begin{proof} Fix $j\in \{1, 2\}$ and $\xi \in \partial \Omega_j$. Notice that there exists $m=m(\xi) >0$ such that if $x \in \Oc_\xi \cap \partial \Omega_j$ and $t \in (0,r_\xi/2)$, then $\Bb(x-t \nu_\xi, m t) \subset \Cb \setminus \Omega_j$. 

Fix $x \in \Oc_\xi \cap \partial\Omega_j$ and $z \in \Bb(x, r_\xi/2) \cap \Omega$. Set $w := x - \abs{z-x}\nu_\xi$ and consider  the holomorphic embedding $f : \Omega_j \rightarrow \Db$ given by 
$$
f(\zeta) := \frac{m \abs{z-x}}{\zeta-w}, \quad \zeta \in \Omega_j.
$$
The conclusion now follows by the same argument as in the proof of Lemma~\ref{lem:ext_cone}.
\end{proof}

\begin{proposition}\label{prop:finding_quasi_geodesics}
For each $\xi \in \partial \Omega_j$ there exists a constant $\lambda_\xi \geq 1$ such that if $x \in \partial \Omega_j \cap \Oc_\xi$, then the map $\sigma_{\xi, x} : [0,\infty) \rightarrow \Omega_j$ given by
$$
\sigma_{\xi,x}(t) := x + \frac{r_\xi}{2}e^{-2t}\nu_\xi, \quad t \in [0, \infty),
$$
is a $(\lambda_\xi,0)$-almost-geodesic. 
\end{proposition}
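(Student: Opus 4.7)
The plan is to verify the two defining conditions of a $(\lambda_\xi, 0)$-almost-geodesic separately, using the interior-cone condition for the upper bounds and Lemma~\ref{lem:lower_bd_planar_ext_pf} for the lower bound.

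First I would estimate $k_{\Omega_j}(\sigma_{\xi,x}(t); \sigma_{\xi,x}'(t))$ from above. Observe that $\sigma_{\xi,x}'(t) = -r_\xi e^{-2t}\nu_\xi$ and $|\sigma_{\xi,x}(t) - x| = (r_\xi/2) e^{-2t}$, so $|\sigma_{\xi,x}'(t)| = 2|\sigma_{\xi,x}(t)-x|$. The interior-cone condition at $x$ (with aperture $\theta_\xi$) implies, by an elementary check on cones, that there exists $c_\xi \in (0,1)$ depending only on $\theta_\xi$ such that $\Bb(\sigma_{\xi,x}(t), c_\xi|\sigma_{\xi,x}(t)-x|) \subset \Omega_j$ for every $t \geq 0$, provided $r_\xi$ is small enough (which we can assume). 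Comparing with this Euclidean disk yields
\begin{equation*}
k_{\Omega_j}(\sigma_{\xi,x}(t); \sigma_{\xi,x}'(t)) \leq \frac{|\sigma_{\xi,x}'(t)|}{c_\xi|\sigma_{\xi,x}(t)-x|} = \frac{2}{c_\xi}.
\end{equation*}
Integrating gives the upper bound $K_{\Omega_j}(\sigma_{\xi,x}(s), \sigma_{\xi,x}(t)) \leq (2/c_\xi)|t-s|$ for all $s,t \geq 0$ via Result~\ref{res:integ_dist}.

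Next I would establish the quasi-isometric lower bound $K_{\Omega_j}(\sigma_{\xi,x}(s), \sigma_{\xi,x}(t)) \geq a_\xi |t-s|$ (with $a_\xi$ from Lemma~\ref{lem:lower_bd_planar_ext_pf}, up to a harmless factor). By Result~\ref{res:integ_dist}, it suffices to bound $\ell_{\Omega_j}(\gamma)$ from below for an arbitrary absolutely continuous curve $\gamma:[0,1]\to\Omega_j$ with $\gamma(0)=\sigma_{\xi,x}(s)$ and $\gamma(1)=\sigma_{\xi,x}(t)$, assuming WLOG $s<t$. The key remark is that for any $\gamma$ lying in $\Bb(x, r_\xi/2)\cap \Omega_j$, Lemma~\ref{lem:lower_bd_planar_ext_pf} gives
\begin{equation*}
\ell_{\Omega_j}(\gamma) \geq a_\xi \int_0^1 \frac{|\gamma'(u)|}{|\gamma(u)-x|}\,du \geq a_\xi\left|\log\frac{|\gamma(1)-x|}{|\gamma(0)-x|}\right| = 2a_\xi(t-s),
\end{equation*}
using the elementary fact that $|(d/du)\log|\gamma(u)-x|| \leq |\gamma'(u)|/|\gamma(u)-x|$.

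The main (though still manageable) obstacle is handling curves $\gamma$ that leave $\Bb(x, r_\xi/2)$. The plan is to let $u_1$ be the first time $|\gamma(u_1)-x|=r_\xi/2$ and $u_2$ the last such time; both initial and terminal sub-arcs $\gamma|_{[0,u_1]}$ and $\gamma|_{[u_2,1]}$ then lie in $\Bb(x, r_\xi/2)\cap\Omega_j$ and connect points at distances $(r_\xi/2)e^{-2s}$, resp.\ $(r_\xi/2)e^{-2t}$, from $x$ to a point at distance $r_\xi/2$ from $x$. The same log-ratio estimate gives $\ell_{\Omega_j}(\gamma|_{[0,u_1]}) \geq 2a_\xi s$ and $\ell_{\Omega_j}(\gamma|_{[u_2,1]}) \geq 2a_\xi t$, whence $\ell_{\Omega_j}(\gamma) \geq 2a_\xi(s+t) \geq 2a_\xi(t-s)$. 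Taking the infimum over $\gamma$ then gives $K_{\Omega_j}(\sigma_{\xi,x}(s),\sigma_{\xi,x}(t)) \geq 2a_\xi(t-s)$. Setting $\lambda_\xi := \max\{2/c_\xi,\, 1/(2a_\xi),\, 1\}$ then verifies both defining properties of a $(\lambda_\xi,0)$-almost-geodesic, and $\lambda_\xi$ depends only on $\xi$ (through $c_\xi$, $a_\xi$, and $r_\xi$), as required.
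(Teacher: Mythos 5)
Your proposal is correct and follows essentially the same route as the paper: the cone condition gives the pointwise bound $k_{\Omega_j}(\sigma_{\xi,x}(t);\sigma_{\xi,x}'(t)) \leq 2/c_\xi$ and hence the upper distance bound, while Lemma~\ref{lem:lower_bd_planar_ext_pf} plus the logarithmic-derivative estimate gives the lower bound. The only (immaterial) difference is bookkeeping in the lower bound: the paper avoids your two-case split by taking $a$ to be the \emph{last} time the competing curve is at distance $\geq (r_\xi/2)e^{-2t_1}$ from $x$ and applying the log-ratio estimate on $[a,1]$ alone.
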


\begin{proof} Fix $j\in \{1, 2\}$ and $\xi \in \partial \Omega_j$. Notice that there exists $m=m(\xi) >0$ such that if $x \in \Oc_\xi \cap \partial \Omega_j$ and $t \in (0,r_\xi/2)$, then
\begin{align}
\Bb(x+t \nu_\xi, m t) &\subset \Omega_j, \text{ and } \label{eq:ball_in} \\
\Bb(x-t \nu_\xi, m t) &\subset \Cb \setminus \Omega_j. \notag
\end{align}
Next, fix $x \in \Oc_{\xi} \cap \partial \Omega_j$ and $0 < t_1 < t_2$. Then 
\begin{align*}
K_{\Omega_j}( \sigma_{\xi, x}(t_1),  \sigma_{\xi, x}(t_2))
  &\leq \int_{t_1}^{t_2} k_{\Omega_j}(\sigma_{\xi,x}(s); \sigma_{\xi,x}^\prime(s)) ds \\
  &\leq \int_{t_1}^{t_2} \frac{\big|\sigma_{\xi,x}^\prime(s)\big|}{\delta_{\Omega_j}(\sigma_{\xi,x}(s))}ds
    \leq \int_{t_1}^{t_2} \frac{2}{m}ds = \frac{2}{m} (t_2-t_1).
\end{align*}
The second inequality follows by a comparison of the Kobayashi metrics of $\Omega_j$ and \linebreak
$\Bb\big(\sigma_{\xi,x}(s), \delta_{\Omega_j}(\sigma_{\xi,x}(s))\big)$ and the fact that the inclusion $\Bb\big(\sigma_{\xi,x}(s), \delta_{\Omega_j}(\sigma_{\xi,x}(s))\big) \hookrightarrow \Omega_j$ is holomorphic. The third inequality follows by computing $\big|\sigma_{\xi,x}^\prime(s)\big|$ and observing that, due to \eqref{eq:ball_in},
\[
  \frac{mr_{\xi}}{2}e^{-2s} \leq \delta_{\Omega_j}(\sigma_{\xi,x}(s)).
\]
    
For the lower bound, let $\sigma : [0,1] \rightarrow \Omega_j$ be an absolutely continuous curve with $\sigma(0)=\sigma_{\xi, x}(t_1)$ and $\sigma(1) = \sigma_{\xi, x}(t_2)$. Let 
$$
a := \max\left\{ t \in [0,1] : \abs{\sigma(t)-x} \geq  \frac{r_\xi}{2} e^{-2t_1} \right\}.
$$ 
Then 
$$
\abs{\sigma(t)-x} < \frac{r_\xi}{2} e^{-2t_1} < \frac{r_\xi}{2}
$$
for all $t \in (a,1]$. So, by Lemma~\ref{lem:lower_bd_planar_ext_pf}: 
\begin{align*}
\ell_{\Omega_j}(\sigma)
  &\geq \int_a^1 k_{\Omega_j}(\sigma(s); \sigma^\prime(s)) ds \\
  &\geq a_\xi \int_a^1 \frac{\abs{\sigma^\prime(s)}}{\abs{\sigma(s)-x}} ds \\
  &\geq -a_\xi \int_a^1 \frac{d}{ds} \log \abs{\sigma(s)-x} ds \\
  & = a_{\xi}\big( \log \abs{\sigma(a)-x} -\log \abs{\sigma(1)-x} \big) = 2a_\xi(t_2-t_1).
\end{align*}  
Since the above estimate is independent of the choice of $\sigma$ with the above properties, it follows
from Result~\ref{res:integ_dist}-\eqref{item:dist_integ_absoc} that
\begin{align*}
K_{\Omega_j}( \sigma_{\xi, x}(t_1), \sigma_{\xi, x}(t_1) )\geq 2a_\xi(t_2-t_1).
\end{align*}
From this and the bound in the previous paragraph, the result follows.  
\end{proof}

\begin{lemma}\label{lem:not_infinite_bd_points}
Let $f:\Omega_1 \rightarrow \Omega_2$ be as in the statement of Theorem~\ref{thm:planar_domains}.
\begin{enumerate}
\item If $\xi \in \partial \Omega_1$, then $\lim_{z \rightarrow \xi} f(z)$ exists in $\overline{\Omega}_2^{End}$.
\item If $\xi \in \partial \Omega_2$, then $\lim_{z \rightarrow \xi} f^{-1}(z)$ exists in $\overline{\Omega}_1^{End}$.
\end{enumerate}
\end{lemma}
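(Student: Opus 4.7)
The plan is to use the explicit almost-geodesics $\sigma_{\xi,x}$ constructed in Proposition~\ref{prop:finding_quasi_geodesics} together with the fact that $f$, being a biholomorphism, is an isometry for the Kobayashi distance and infinitesimally for the Kobayashi metric. Statement~(2) follows from~(1) applied to the biholomorphism $f^{-1}:\Omega_2\to\Omega_1$, so I focus on~(1).

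Fix $\xi\in\partial\Omega_1$. By Proposition~\ref{prop:finding_quasi_geodesics}, $\sigma_{\xi,\xi}:[0,\infty)\to\Omega_1$ is a $(\lambda_\xi,0)$-almost-geodesic of $\Omega_1$, so $f\circ\sigma_{\xi,\xi}:[0,\infty)\to\Omega_2$ is a $(\lambda_\xi,0)$-almost-geodesic of $\Omega_2$. Since $\Omega_2$ is a Lipschitz planar domain, Proposition~\ref{prop:planar} and Theorem~\ref{thm:visible} imply that $\Omega_2$ is a visibility domain, so the argument in the first paragraph of the proof of Proposition~\ref{prop:limits_of_geodesic_rays} yields that
\[
\eta := \lim_{t\to\infty} f(\sigma_{\xi,\xi}(t))
\]
exists in $\partial\overline{\Omega}_2^{End}$. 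My goal is to show that $\lim_{z\to\xi}f(z) = \eta$.

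To this end, let $(z_n)_{n\geq 1}\subset\Omega_1$ be any sequence with $z_n\to\xi$. For all sufficiently large $n$, the ray $\{z_n - t\nu_\xi : t > 0\}$ meets $\partial\Omega_1\cap\Oc_\xi$ at some first point $y_n$; writing $s_n := \abs{z_n - y_n}$, a direct computation in local Lipschitz-graph coordinates at $\xi$ gives $y_n\to\xi$ and $s_n\to 0$. Set $b_n := \tfrac{1}{2}\log(r_\xi/(2s_n))$; then $b_n\to\infty$ and, by the explicit form of $\sigma_{\xi,y_n}$ from Proposition~\ref{prop:finding_quasi_geodesics}, $z_n = \sigma_{\xi,y_n}(b_n)$. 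The $(\lambda_\xi,0)$-almost-geodesics $f\circ\sigma_{\xi,y_n} : [0,b_n]\to\Omega_2$ converge locally uniformly on $[0,\infty)$ to $f\circ\sigma_{\xi,\xi}$ (since $y_n\to\xi$ and $f$ is continuous), so Proposition~\ref{prop:limits_of_geodesic_rays}, applied in the visibility domain $\Omega_2$, yields
\[
\lim_{n\to\infty} f(z_n) = \lim_{n\to\infty} f(\sigma_{\xi,y_n}(b_n)) = \lim_{t\to\infty} f(\sigma_{\xi,\xi}(t)) = \eta.
\]
This proves~(1), and~(2) then follows by applying~(1) to $f^{-1}$.

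The main obstacle I anticipate is the representation of an \emph{arbitrary} approach $z_n\to\xi$ (including tangential ones) as $\sigma_{\xi,y_n}(b_n)$ with $y_n\to\xi$ in $\partial\Omega_1\cap\Oc_\xi$ and $b_n\to\infty$. The key facts are: the exterior-cone condition at $\xi$ (see Remark~\ref{rem:about_Lip_dom}) forces $\xi - t\nu_\xi\notin\Omega_1$ for all small $t>0$, and combined with the Lipschitz-graph description of $\partial\Omega_1$ near $\xi$ this guarantees both that the $-\nu_\xi$-ray from $z_n$ exits $\Omega_1$ at a point $y_n$ near $\xi$, and that $s_n$ is comparable to $\delta_{\Omega_1}(z_n)$, hence $s_n\to 0$. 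Once this geometric reduction is in hand the proof reduces to a clean application of the end-limit behaviour of almost-geodesics in visibility domains developed earlier.
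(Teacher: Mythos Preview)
Your proof is correct and follows essentially the same approach as the paper: represent an arbitrary sequence $z_n\to\xi$ as $\sigma_{\xi,x_n}(t_n)$ with $x_n\to\xi$ and $t_n\to\infty$, push these almost-geodesics forward by the biholomorphism $f$ (which preserves almost-geodesics), and apply Proposition~\ref{prop:limits_of_geodesic_rays} in the visibility domain $\Omega_2$ to identify the limit as $\lim_{t\to\infty}f(\sigma_{\xi,\xi}(t))$. The paper asserts the representation $z_n=\sigma_{\xi,x_n}(t_n)$ in one line, whereas you spell out the ray-dropping argument in the $-\nu_\xi$ direction that underlies it; your more explicit treatment of this step is a welcome clarification rather than a different route.
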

\begin{proof}
By symmetry it is enough to prove (1). By compactness, it is enough to fix a sequence $(z_n)_{n \geq 1}$ in $\Omega_1$ that converges to $\xi$ and show that $\lim_{n \rightarrow \infty} f(z_n)$ exists in $\overline{\Omega}_2^{End}$ and depends only on $\xi$. Considering the tail of $(z_n)_{n \geq 1}$, we can assume that for each $n$ there exists $x_n \in \partial \Omega_1 \cap \Oc_{\xi}$ where $z_n = \sigma_{\xi,x_n}(t_n)$ for some $t_n > 0$. Since $z_n \rightarrow \xi$, we must have $t_n \rightarrow \infty$ and $x_n \rightarrow \xi$. 

Observe that each $f \circ \sigma_{\xi,x_n}$ is a $(\lambda_\xi,0)$-almost-geodesic in $\Omega_2$. By construction, $f \circ \sigma_{\xi,x_n} \rightarrow f \circ \sigma_{\xi,\xi}$ locally uniformly. By Proposition~\ref{prop:planar}, $\Omega_2$ is locally a Goldilocks domain. Thus, by Theorem~\ref{thm:visible}, $\Omega_2$ is a visibility domain with respect to the Kobayashi distance. Thus, by Proposition~\ref{prop:limits_of_geodesic_rays}, the limits below exist and we have
$$ 
\lim_{n \rightarrow \infty} f(z_n) = \lim_{n \rightarrow \infty} f \circ \sigma_{\xi,x_n}(t_n)
= \lim_{t \rightarrow \infty} f \circ \sigma_{\xi,\xi}(t).
$$
\end{proof}

With these lemmas, we are now in a position to complete the proof of Theorem~\ref{thm:planar_domains}.

\begin{proof}[The proof of Theorem~\ref{thm:planar_domains}] The first step to proving Theorem~\ref{thm:planar_domains} is to produce a candidate for the homeomorphism whose existence the theorem claims.  In view of Lemma~\ref{lem:not_infinite_bd_points}, it suffices to establish the following
\medskip

\noindent \textbf{Claim I.} \emph{
\begin{enumerate}
\item If $\xi$ is an end of $\Omega_1$, then $\lim_{z \rightarrow \xi} f(z)$ exists in $\overline{\Omega}_2^{End}$.
\item If $\xi$ is an end of $\Omega_2$, then $\lim_{z \rightarrow \xi} f^{-1}(z)$ exists in $\overline{\Omega}_1^{End}$.
\end{enumerate}}
\smallskip

\noindent{By symmetry it is enough to prove part~(1) of the above claim.
Suppose, for a contradiction, that there exist sequences $(w_n)_{n \geq 1}$ and $(z_n)_{n \geq 1}$ in $\Omega_1$ such that $z_n \rightarrow \xi$, $w_n \rightarrow \xi$, but $f(z_n) \rightarrow \eta_1$, $f(w_n) \rightarrow \eta_2$, and $\eta_1 \neq \eta_2$. Since $\xi$ is an end of $\overline{\Omega}_1$, for each $n$ we can find a curve $\sigma_n : [0,1] \rightarrow \overline{\Omega}_1$ joining $z_n$ and $w_n$ such that $\sigma_n([0,1])$ leaves every compact subset of $\overline{\Omega}_1$. By perturbing each $\sigma_n$, we may assume that $\sigma_n([0,1]) \subset \Omega_1$.} 

Let $C_n := f( \sigma_n([0,1]))$. Passing to a subsequence and relabeling, if needed, we can assume that $C_n$ converges in the local Hausdorff topology to a closed set $C \subset \Cb^d$. Since $\sigma_n([0,1])$ leaves every compact subset of $\Omega_1$, we must have $C \subset \partial \Omega_2$. Since $\eta_1 \neq \eta_2$, $C$ must be non-empty. By construction, if $\eta \in C$, then there exist points $u_n \in \sigma_n([0,1])$ such that $f(u_n) \rightarrow \eta$. So, by Lemma~\ref{lem:not_infinite_bd_points},
$$
\lim_{z \rightarrow \eta} f^{-1}(z)=\lim_{n \rightarrow \infty} f^{-1} ( f(u_n)) = \lim_{n \rightarrow \infty} u_n = \xi.
$$
 Then, since $\xi$ is an end of $\overline{\Omega}_1$, 
\begin{align}\label{eqn:blow_up}
 \lim_{z \rightarrow \eta} \abs{f^{-1}(z)}=\infty. 
\end{align}

Arguing as in Lemma~\ref{lem:C_connected}, the closure of $C$ in $\overline{\Omega}^{End}$ is connected. Then $C$ intersected with each connected component of $\partial \Omega_2$ is connected. Since each connected component of $\partial \Omega_2$ is homeomorphic to $\Sb^1$ or $\Rb$, we can find a subset $I \subset C$ that is homeomorphic to an open interval. Since $\Omega_2$ is a Lipschitz domain, after possibly shrinking $I$ we can find a topological embedding $\tau : \overline{\Db} \rightarrow \Cb$ such that 
$$
\overline{I} \subset \tau(\overline{\Db}) \subset \Omega_2 \cup \overline{I}
$$
\begin{figure}[H]
  \centering
  \fbox{\includegraphics[width=0.95\textwidth]{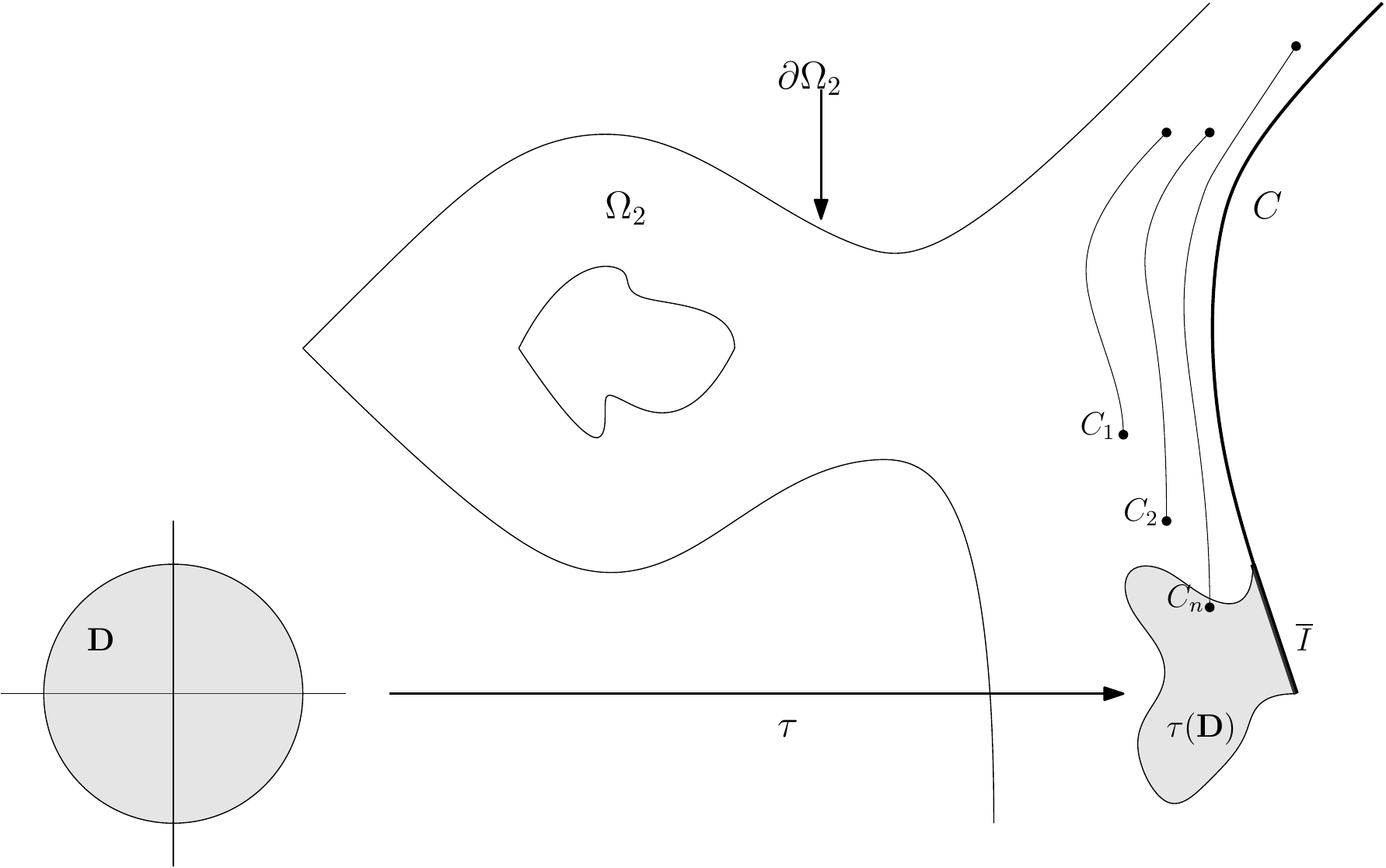}}
  \caption{The embedding of $\overline{\Db}$ by $\tau$}\label{fig:omega_2}
\end{figure}
\noindent{(see Figure~\ref{fig:omega_2}). Next, fix a Riemann mapping $\psi : \Db \rightarrow \tau(\Db)$. By the Carath\'eodory's extension theorem, $\psi$ extends to a homeomorphism $\overline{\Db} \cong \tau(\overline{\Db})$. In particular, $\psi^{-1}(I)$ is an open arc in $\Sb^1$.}

Since $\Omega_1$ is a Lipschitz domain, there exist $\epsilon > 0$ and $z_0 \in \Cb \setminus \Omega_1$ with $\Bb(z_0,\epsilon) \subset \Cb \setminus \Omega_1$. Then consider the map $\phi : \Db \rightarrow \Db \setminus\{0\}$ given by
$$
\phi(w) := \frac{\epsilon}{z_0-f^{-1}(\psi(w))}, \quad w \in \Db. 
$$
By \eqref{eqn:blow_up}, if $e^{i\theta} \in \psi^{-1}(I)$, then 
$$
\lim_{r \nearrow 1} \phi( re^{i\theta}) = 0.
$$
Then, since $\psi^{-1}(I)$ is an open arc in $\Sb^1$, the Luzin--Privalov Theorem (see~\cite[Chapter 2]{CL1966}) implies that $\phi \equiv 0$. Thus, we have a contradiction. This establishes Claim~I.

In view of Claim~I, we can now define the following maps. Define $F : \overline{\Omega}_1^{End} \rightarrow \overline{\Omega}_2^{End}$ by 
$$
F(z) = \begin{cases}
f(z), & \text{if } z \in \Omega_1, \\
\lim_{w \rightarrow z} f(w),  & \text{if } z \in \partial\overline{\Omega}_1^{End}.
\end{cases}
$$
Next, define $G : \overline{\Omega}_2^{End} \rightarrow \overline{\Omega}_1^{End}$ by 
$$
G(z) = \begin{cases}
f^{-1}(z), & \text{if } z \in \Omega_2, \\
\lim_{w \rightarrow z} f^{-1}(w),  & \text{if } z \in \partial\overline{\Omega}_2^{End}.
\end{cases}
$$

We would be done if we can prove:
\medskip

\noindent{\textbf{Claim II.} \emph{$F$ is a homeomorphism and $F^{-1}=G$.}}
\smallskip

\noindent{We first argue that $F$ and $G$ are continuous. By symmetry it is enough to show that $F$ is continuous. Since $\overline{\Omega}^{End}_2$ is compact, it is enough to fix a sequence $(\xi_n)_{n \geq 1}$ in $\overline{\Omega}^{End}_1$ for which $\xi_n \rightarrow \xi$, $F(\xi_n) \rightarrow \eta$, and then show that $\eta = F(\xi)$. By Claim~I, we may assume that $(\xi_n)_{n \geq 1}$ and $\xi$ are contained in $\partial \overline{\Omega}^{End}_1$. Since $\lim_{z \rightarrow \xi_n} f(z)=F(\xi_n)$, for each $n$ we can find $z_n \in \Omega_1$ sufficiently close to $\xi_n$ such that $z_n \rightarrow \xi$ and $f(z_n) \rightarrow \eta$. Then 
$$
\eta = \lim_{n \rightarrow \infty} f(z_n) = \lim_{z \rightarrow \xi} f(z)=F(\xi).
$$
So, $F$ is continuous.} 

Finally, notice that $(F\circ G)|_{\Omega_2} = \id_{\Omega_2}$ and $(G\circ F)|_{\Omega_1} = \id_{\Omega_1}$, whence, by density, $F \circ G = \id_{ \overline{\Omega}_2^{End}}$ and $G \circ F = \id_{ \overline{\Omega}_1^{End}}$. So, $F$ is a homeomorphism and $F^{-1}=G$.
\end{proof}

\section{Kobayashi geometry of planar domains: non-Gromov-hyperbolic domains}\label{sec:non-G-h}
In this section we construct two examples of planar domains $\Omega$ where $\Omega$ is a visiblity domain but $(\Omega, K_{\Omega})$ is not Gromov hyperbolic. In the first construction, the obstruction to Gromov hyperbolicity comes from the existence of a $\Zb^2$ action, while in the second the obstruction comes from asymptotic isometric embeddings of metric spaces admitting ``fat triangles.''

\subsection{Using $\boldsymbol{\mathbb{Z}^2}$ actions}
The first example comes directly from the following 

\begin{proposition} If
$$
\Omega := \Cb \setminus \cup_{m,n \in \Zb} \{ z \in \Cb : \abs{z -(m+ni)} < 1/4\},
$$
then $\Omega$ is a visibility domain, but $(\Omega, K_\Omega)$ is not Gromov hyperbolic.
\end{proposition}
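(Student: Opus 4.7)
The plan has two stages. For visibility, note that $\partial\Omega$ is a disjoint union of $\Cc^{\infty}$ circles, so near any $x \in \partial\Omega$ a unitary change of coordinates exhibits $\partial\Omega$ as the graph of a smooth (hence Lipschitz) function; thus $\Omega$ is a Lipschitz domain in the sense of Definition~\ref{def:Lip_domain}. Proposition~\ref{prop:planar} then gives $\partial\Omega = \partial_{{\rm lg}}\Omega$, so $\partial\Omega \setminus \partial_{{\rm lg}}\Omega$ is (trivially) totally disconnected, and Theorem~\ref{thm:visible} yields that $\Omega$ is a visibility domain.

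For the second stage, I would first establish a uniform Euclidean lower bound $k_{\Omega}(z;v) \geq c\abs{v}$ on all of $\Omega$ for some $c > 0$. For $z$ in the fundamental domain $[0,1]^{2} \cap \Omega$ this follows from Lemma~\ref{lem:ext_cone} (applicable because $\Omega$ satisfies a local exterior-cone condition by Remark~\ref{rem:about_Lip_dom}) combined with the universal upper bound $\delta_{\Omega}(z) \leq \tfrac{\sqrt{2}}{2} - \tfrac{1}{4}$ on this fundamental domain; the bound then propagates to all of $\Omega$ via the $\Zb^{2}$-invariance of $k_{\Omega}$ and $\delta_{\Omega}$. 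Integrating yields $K_{\Omega}(z,w) \geq c\abs{z - w}$ on $\Omega \times \Omega$, so the commuting isometries $T_{1}\colon z \mapsto z + 1$ and $T_{2}\colon z \mapsto z + i$ of $(\Omega, K_{\Omega})$ both have positive translation length.

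Next, for $N \gg 1$, set $o := \tfrac{1}{2} + \tfrac{i}{2}$, $p_{1} := o$, $p_{2} := o + N$, $p_{3} := o + Ni$, $p_{4} := o + N + Ni$, and build two absolutely continuous paths from $p_{1}$ to $p_{4}$: $\sigma_{A}$ runs horizontally along $\Imag z = \tfrac{1}{2}$ to $p_{2}$ and then vertically along $\Real z = N + \tfrac{1}{2}$ to $p_{4}$, while $\sigma_{B}$ uses the dual route via $p_{3}$. Each half-segment lies on the center line of a strip of width $\tfrac{1}{2}$ inside $\Omega$; the biholomorphism $z \mapsto e^{2\pi z}$ from this strip to $\Hb$ carries the center line to the imaginary axis (a hyperbolic geodesic of $\Hb$), and a direct computation gives the strip-hyperbolic length, hence the $\Omega$-Kobayashi length, of each half-segment as at most $2\pi N$. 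Parametrizing $\sigma_{A}, \sigma_{B}$ by $K_{\Omega}$-arc length, the Euclidean lower bound combined with a short case analysis on the two parameters shows both are $(\lambda, 0)$-almost-geodesics with $\lambda$ independent of $N$; symmetries of $\Omega$ by rotations about lattice half-integer centers make the two halves of $\sigma_A$ (and of $\sigma_B$) have equal $K_{\Omega}$-length, so that the arc-length midpoint of $\sigma_{A}$ is exactly $p_{2}$.

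If $(\Omega, K_{\Omega})$ were Gromov hyperbolic, the Morse-lemma-type stability of $(\lambda, \kappa)$-almost-geodesics (a consequence of the slim-triangles formulation in Section~\ref{ssec:slim}, applied after approximating $\sigma_A, \sigma_B$ by concatenations of $(1, \kappa)$-almost-geodesics via Proposition~\ref{prop:almost_geod_exist}) would force the $K_{\Omega}$-Hausdorff distance between $\sigma_{A}$ and $\sigma_{B}$ to be at most some $R = R(\delta, \lambda) < \infty$ independent of $N$. But every point of $\sigma_{B}$ lies on $\{\Real z = \tfrac{1}{2}\} \cup \{\Imag z = N + \tfrac{1}{2}\}$ at Euclidean distance $\geq N$ from $p_{2}$, whence $K_{\Omega}(p_{2}, \sigma_{B}) \geq cN \to \infty$, a contradiction for $N$ large. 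The main technical obstacle is verifying that $\sigma_{A}, \sigma_{B}$ are genuine almost-geodesics with constants uniform in $N$, for which the Euclidean lower bound on $K_{\Omega}$ is indispensable.
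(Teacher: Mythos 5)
Your visibility argument is exactly the paper's: $\Omega$ is a (in fact $\Cc^\infty$-smooth) Lipschitz domain, so Proposition~\ref{prop:planar} gives $\partial_{{\rm lg}}\Omega=\partial\Omega$ and Theorem~\ref{thm:visible} applies. For non-Gromov-hyperbolicity you take a genuinely different, more hands-on route. The paper shows that $(m,n)\mapsto z_0+m+in$ is a quasi-isometric embedding of $\Zb^2$ into $(\Omega,K_\Omega)$ --- using the same two estimates you use, namely the uniform lower bound $k_\Omega(z;v)\geq c\abs{v}$ obtained from Lemma~\ref{lem:ext_cone} plus $\Zb^2$-invariance, and an upper bound coming from the commuting translations --- and then invokes the quasi-isometry invariance of Gromov hyperbolicity together with the non-hyperbolicity of $(\Rb^2,d_{\rm Euc})$. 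You instead exhibit, for each $N$, an explicit ``fat'' quasi-geodesic quadrilateral: two L-shaped paths from $p_1$ to $p_4$ which are $(\lambda,0)$-almost-geodesics with $\lambda$ independent of $N$ (your verification is sound: the strip comparison gives the length upper bound, and the Euclidean lower bound on $K_\Omega$ gives the quasi-geodesic lower bound once one notes that for points on different legs the Euclidean distance still dominates the sum of the leg-lengths up to a factor $\sqrt{2}$; the symmetry remark about arc-length midpoints is unnecessary, since all you need is that $p_2$ lies on $\sigma_A$) yet whose $K_\Omega$-Hausdorff distance is at least $cN$. This buys a self-contained contradiction without appealing to QI-invariance, and the two proofs are close cousins, since QI-invariance is itself proved via quasi-geodesic stability. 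The one step you should justify differently is the stability step itself: Section~\ref{ssec:slim} only treats triangles whose sides are $(1,\kappa)$-near-geodesics and does not by itself yield a Morse lemma for $(\lambda,\kappa)$-quasi-geodesics with $\lambda>1$, and approximating $\sigma_A,\sigma_B$ via Proposition~\ref{prop:almost_geod_exist} and Lemma~\ref{lem:approximating_quasi-geodesics} produces $(\lambda',\kappa')$-almost-geodesics rather than $(1,\kappa)$-near-geodesics, so the paper's slim-triangle machinery does not directly apply. Since $(\Omega,K_\Omega)$ is a geodesic metric space (it is a complete hyperbolic planar domain; cf.\ Proposition~\ref{prop:Hopf_Rinow}), the clean fix is to cite the standard stability theorem for quasi-geodesics in geodesic Gromov hyperbolic spaces \cite[Chapter~III.H, Theorem~1.7]{BH1999}, which gives exactly the uniform bound $R=R(\delta,\lambda)$ you need; with that substitution your argument is complete.
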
 

\begin{proof} Fix $z_0 \in \Omega$ and consider the map $F : \Zb^2 \rightarrow \Omega$ defined by 
$$
F(m,n) := z_0+ m+in.
$$
We claim that $F$ is a quasi-isometric embedding, which will imply that $(\Omega, K_\Omega)$ is not Gromov hyperbolic. Here, the metric on $\Zb^2$\,---\,which we shall denote by $d_{\Zb^2}$\,---\,is\linebreak
$d_{\Zb^2}((m_1, n_1), (m_2, n_2)) := |m_2 - m_1| + |n_2 - n_1|$.

Since $\Omega$ is invariant under translations by $\Zb + \Zb\bcdot\,i$, Lemma~\ref{lem:ext_cone} implies that there exists $C_1 > 0$ such that $k_\Omega(z;v) \geq C_1 \abs{v}$ for all $z \in \Omega$ and $v \in \Cb$ (even though it is a rather inefficient lower bound). Then 
$$
K_\Omega(z_1, z_2) \geq C_1 \abs{z_1-z_2}
$$
for all $z_1, z_2 \in \Omega$. Hence, for any $(m_1, n_1), (m_2, n_2)\in \Zb^2$, 
\begin{align*}
K_\Omega  (F(m_1,n_1), F(m_2,n_2)) & \geq C_1 \sqrt{ \abs{m_2-m_1}^2 + \abs{n_2-n_1}^2} \\
& \geq \frac{C_1}{\sqrt{2}} (\abs{m_2-m_1} +\abs{n_2-n_1}).
\end{align*}

Notice that the maps $S, T : \Omega \rightarrow \Omega$ given by $S(z) = z+1$ and $T(z) = z+i$ are commuting biholomorpisms. Further, $F(m,n) = S^m(T^n(z_0))$ and so
\begin{align*}
K_\Omega & (F(m_1,n_1), F(m_2,n_2))  = K_\Omega( S^{m_1-m_2}(z_0), T^{n_2-n_1}(z_0)) \\
& \leq K_\Omega( S^{m_1-m_2}(z_0), z_0) + K_\Omega( z_0, T^{n_2-n_1}(z_0)) \\
& = K_\Omega( S^{\abs{m_2-m_1}}(z_0), z_0) + K_\Omega( z_0, T^{\abs{n_2-n_1}}(z_0)) \\
& \leq \sum_{j=0}^{\abs{m_2-m_1}-1} K_\Omega( S^j (z_0), S^{j+1}(z_0))+\sum_{j=0}^{\abs{n_2-n_1}-1} K_\Omega( T^j (z_0), T^{j+1}(z_0)) \\
& = C_2 (\abs{m_2-m_1} +\abs{n_2-n_1})
\end{align*}
where $C_2 := \max\{ K_\Omega(T(z_0), z_0), K_\Omega(S(z_0), z_0)\}$. 

Now, note that the map $\Phi : (x,y)\mapsto (\lfloor{x}\rfloor, \lfloor{y}\rfloor)$ (where $\lfloor{\bcdot}\rfloor$ is the floor function) is a $(\sqrt{2}, 2)$-quasi-isometric embedding from $(\Rb^2, d_{{\rm Euc}})$ to $(\Zb^2, d_{\Zb^2})$. So, the last two chains of inequalities imply that $F\circ \Phi$ is a quasi-isometric embedding from $(\Rb^2, d_{{\rm Euc}})$ to $(\Omega, K_{\Omega})$. Secondly, note that as $\Omega$ is a planar hyperbolic domain, $(\Omega, K_{\Omega})$ is a geodesic metric space (recall that $K_{\Omega}$ coincides with the hyperbolic distance on $\Omega$). So, if $(\Omega, K_{\Omega})$ were Gromov hyperbolic, it would follow from \cite[Chapter~III.H, Theorem~1.9]{BH1999} that $(\Rb^2, d_{{\rm Euc}})$ is Gromov hyperbolic, which is clearly false. Thus, $(\Omega, K_\Omega)$ is not Gromov hyperbolic. However, it follows from Proposition~\ref{prop:planar} and Theorem~\ref{thm:visible} that $\Omega$ is a visibility domain.
\end{proof}

\subsection{Non-hyperbolic domains in the limit} 
For $s > 0$, define $A_s := \{ z \in \Cb : e^{-s} < \abs{z} < 1\}$. 

\begin{lemma}\label{lem:there_is_an_s_where_not_thin} For $s>0$, define
$$
\delta(s) := \inf\left\{ (x|y)_p^{A_s} - \min \left\{ (x|z)_p^{A_s}, (y|z)_p^{A_s}\right\} : x,y,z,p \in A_s \right\}.
$$
Then $\lim_{s \searrow 0} \delta(s) = -\infty$. 
\end{lemma}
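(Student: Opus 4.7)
The idea is to exhibit, for each \(s>0\), four explicit points \(x,y,z,p\in A_s\) for which the expression inside the infimum defining \(\delta(s)\) is bounded above by \(-\pi^{2}/(3s)\); since this bound goes to \(-\infty\) as \(s\searrow 0\), the conclusion follows. The geometric picture is that \(A_s\) is a hyperbolic cylinder whose unique simple closed geodesic has Poincar\'e length \(L:=2\pi^{2}/s\), so three points equally spaced around this geodesic, together with a fourth point suitably placed on it, form an arbitrarily fat triangle.

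First I would lift to the universal cover. The map \(\Phi(w)=e^{isw}\) makes the horizontal strip \(\Sigma:=\{w\in\Cb:0<\Imaginary w<1\}\) into the universal cover of \(A_{s}\), with deck group generated by \(\tau(w)=w+2\pi/s\). Pulling the upper half-plane metric back along \(z=e^{\pi w}\) shows that the Poincar\'e (equivalently, Kobayashi) metric on \(\Sigma\) is
\[
ds_{\Sigma}=\frac{\pi\,|dw|}{\sin(\pi\,\Imaginary w)}.
\]
The anti-holomorphic reflection \(w\mapsto \overline{w}+i\) is an isometry of \(\Sigma\) whose fixed-point set is the central line \(\ell:=\{\Imaginary w=1/2\}\), so \(\ell\) is a geodesic of \(\Sigma\); since the induced metric on \(\ell\) is \(\pi\,|dx|\), the \(\Sigma\)-distance between any two points of \(\ell\) equals \(\pi\) times their horizontal separation.

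Next, with \(L:=2\pi^{2}/s\), choose the lifts
\[
\wt x:=\tfrac{i}{2},\quad \wt p:=\tfrac{\pi}{3s}+\tfrac{i}{2},\quad \wt y:=\tfrac{2\pi}{3s}+\tfrac{i}{2},\quad \wt z:=\tfrac{4\pi}{3s}+\tfrac{i}{2}
\]
on \(\ell\); let \(x,y,z,p\in A_{s}\) denote their \(\Phi\)-images. For any \(q_{1},q_{2}\in A_{s}\) one has \(K_{A_{s}}(q_{1},q_{2})=\min_{n\in\Zb}d_{\Sigma}(\wt q_{1},\wt q_{2}+2\pi n/s)\); minimising \(|\Delta x|\) over \(n\) and applying the formula above yields
\[
K_{A_{s}}(p,x)=K_{A_{s}}(p,y)=\tfrac{L}{6},\quad K_{A_{s}}(p,z)=\tfrac{L}{2},\quad K_{A_{s}}(x,y)=K_{A_{s}}(y,z)=K_{A_{s}}(x,z)=\tfrac{L}{3}.
\]
Substituting into the Gromov products with base-point \(p\) gives \((x|y)_{p}^{A_{s}}=0\) and \((x|z)_{p}^{A_{s}}=(y|z)_{p}^{A_{s}}=L/6\), hence
\[
(x|y)_{p}^{A_{s}}-\min\bigl\{(x|z)_{p}^{A_{s}},(y|z)_{p}^{A_{s}}\bigr\}=-\tfrac{L}{6}=-\frac{\pi^{2}}{3s},
\]
so \(\delta(s)\leq-\pi^{2}/(3s)\to-\infty\) as \(s\searrow 0\).

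The one step requiring genuine justification is the claim that the \(\Sigma\)-distance between two points of \(\ell\) is exactly \(\pi\) times their horizontal separation, i.e.\ that the minimising path does not leave \(\ell\). This is precisely the geodesicity of \(\ell\) in \(\Sigma\), and it is handled by the reflection symmetry identified above; the remainder of the argument is a direct Gromov-product computation.
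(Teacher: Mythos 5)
Your proof is correct and follows essentially the same route as the paper's: uniformize $A_s$ by a strip, identify the core closed geodesic of length $2\pi^2/s$, place four points on it, compute pairwise distances by minimizing over deck translates, and read off the Gromov products. The only differences are cosmetic — you use the horizontal strip with $w\mapsto e^{isw}$ and justify geodesicity of the central line via the reflection $w\mapsto\overline{w}+i$ (a nice explicit touch), and your point configuration yields $-L/6$ where the paper's equally spaced one yields $-L/4$; both tend to $-\infty$.
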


\begin{proof}  Let $\Omega_s:= \{ x+iy \in\Cb : -s < x < 0, \ y \in \Rb\}$. Then, the map $\pi : \Omega_s \rightarrow A_s$, where $\pi(z) := e^z$, is a covering map. Hence, for any $z, w\in A_s$, 
$$
K_{A_s}(z,w) = \inf \left\{ K_{\Omega_s}(\wt{z}, \wt{w}) : \wt{z} \in \pi^{-1}\{z\}, \ \wt{w} \in \pi^{-1}\{w\}\right\}. 
$$ 
Since $\Omega_1\ni z \mapsto sz \in \Omega_s$ is an isometry, there exists $c > 0$ such that the curve $\wt{\sigma}_s(t) := -(s/2)+icst$  is a geodesic in $\Omega_s$ and hence $\sigma_s(t) := e^{-s/2}e^{icst}$ is a local geodesic in $A_s$. In fact, if $\abs{t_1-t_2} \leq \frac{\pi}{cs}$, then 
$$
K_{A_s}(\sigma(t_1), \sigma(t_2)) = \abs{t_1-t_2}.
$$

Now, \textbf{fix} $s>0$, and let $p=\sigma_s(0)$, $x=\sigma_s\left(\frac{\pi}{2cs}\right)$, $z =\sigma\left(-\frac{\pi}{cs}\right)$, and $y = \sigma\left(\frac{3\pi}{2cs}\right)$. Then, we compute to get 
$$
K_{A_s}(p,x) = K_{A_s}(x,z) = K_{A_s}(z,y) = K_{A_s}(y,p) = \frac{\pi}{2cs},
$$ 
and 
$$
K_{A_s}(x,y) = K_{A_s}(p,z) =  \frac{\pi}{cs}.
$$ 
Hence 
$$
(x|y)_p^{A_s} - \min \left\{ (x|z)_p^{A_s}, (y|z)_p^{A_s}\right\} = 0 - \min \left\{  \frac{\pi}{2cs}, \frac{\pi}{2cs} \right\} = - \frac{\pi}{2cs}. 
$$
Thus, $\delta(s) \leq  -\frac{\pi}{2cs}$, and the result follows.
\end{proof} 

We now describe the domain that will be the focus of this section. Fix a sequence $(s_n)_{n \geq 1}$ of numbers in $(0,1)$ such that for any $s \in (0,1)$ there exists a sequence $(n_j)_{j\geq 1}$ in $\Nb$ with $n_j \rightarrow \infty$ so that $s_{n_j} \nearrow s$. Next, for each $n \geq 1$, let $z_n := 3(n-1) \in \Zb_{\geq 0}$. Finally, 
\begin{itemize}
\item[$(\star)$] Let $\Omega \varsubsetneq \Cb$ be a connected domain with $\Cc^\infty$ boundary such that, for each $n \geq 1$,
\begin{align*}
z_n + A_{s_n} &\subset \Bb(z_n, 2) \cap \Omega \\
&\subset z_n + 
\left(A_{s_n} \cup \left\{ z \in \Cb : e^{-s_n} < \abs{z} < 2, \ -n^{-1} < \mathsf{Im}(z) < n^{-1} \right\} \right).
\end{align*}
\end{itemize}
Thus, $\Omega$ consists of a countable union of annuli centered at points along the real axis, each with outer radius $1$ and of varying inner radii, with thin strips, loosely speaking, joining adjacent annuli. To elaborate further, consider the auxiliary domain
$$
\wt{\Omega} := \bigcup_{n=1}^\infty \left(z_n + 
\left(A_{s_n} \cup \left\{ z \in \Cb : e^{-s_n} < \abs{z} < 2, \ -n^{-1} < \mathsf{Im}(z) < n^{-1} \right\} \right)
\right).
$$
Our $\Omega$ looks approximately like $\wt{\Omega}$: more precisely, $\Omega \varsubsetneq \wt{\Omega}$ is obtained by smoothening the boundary of $\wt{\Omega}$ but ensuring that $(z_n + A_{s_n}) \subset \Omega$ for each $n\geq 1$ and such that the second inclusion in the description $(\star)$ holds true for each $n\geq 1$.

\begin{lemma}\label{lem:convergence_on_each_ann} Let $\Omega$ be as described above. If $s \in (0,1)$, $s_{n_j} \nearrow s$, and $z,w \in A_s$, then 
$$
K_{A_s}(z,w) = \lim_{j \rightarrow \infty} K_\Omega(z+z_{n_j}, w+z_{n_j}). 
$$
\end{lemma}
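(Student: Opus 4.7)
The plan is to reduce to a Carath\'eodory kernel convergence statement and then invoke the classical stability theorem for Poincar\'e metrics.

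Since $\zeta \mapsto \zeta + z_{n_j}$ is a biholomorphism $\Omega - z_{n_j} \to \Omega$, we have $K_\Omega(z+z_{n_j}, w+z_{n_j}) = K_{\Omega - z_{n_j}}(z,w)$, so it suffices to show $K_{\Omega - z_{n_j}}(z,w) \to K_{A_s}(z,w)$ as $j\to\infty$. The central claim is that $(\Omega - z_{n_j})_{j\geq 1}$ converges to $A_s$ in the Carath\'eodory kernel sense, with respect to any basepoint $z_0 \in A_s$. Granting this, the classical stability theorem for Poincar\'e densities under kernel convergence yields $\rho_{\Omega - z_{n_j}} \to \rho_{A_s}$ locally uniformly on $A_s$, and hence $K_{\Omega - z_{n_j}}(z,w) \to K_{A_s}(z,w)$ for $z,w \in A_s$ (recalling that on planar hyperbolic domains the Kobayashi distance coincides with the Poincar\'e distance).

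To verify the kernel convergence, I would check two assertions. First, every compact set $K \Subset A_s$ lies in $\Omega - z_{n_j}$ for all $j$ large: choose $\delta>0$ with $K \subset \{e^{-s+\delta} \leq |\zeta| \leq 1\}$; since $s_{n_j} \nearrow s$, eventually $s_{n_j} > s-\delta$, whence $K \subset A_{s_{n_j}} \subset \Omega - z_{n_j}$. Second, no open neighborhood of any $\zeta_0 \in \Cb \setminus A_s$ lies eventually in $\Omega - z_{n_j}$. This is verified case-by-case against the description $(\star)$: for $|\zeta_0| \leq e^{-s}$, a small ball at $\zeta_0$ sits inside the inner disk $\{|\zeta| < e^{-s_{n_j}}\}$ of $A_{s_{n_j}}$, is disjoint from the strip, and is disjoint from the other annuli (which sit at distance $\geq 3-1=2$ from $0$); for $\zeta_0 \in \{1 \leq |\zeta| \leq 2\}$, any ball of fixed radius must eventually exit the strip of half-width $1/n_j \to 0$; and for $\zeta_0$ outside $\Bb(0,2)$, the translated annuli $(z_n - z_{n_j}) + A_{s_n}$, $n \neq n_j$, are separated from $A_s$ by regions whose connecting strips have vanishing width, so no admissible open connected set containing $z_0$ can reach them.

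The main obstacle is this maximality step: one must rule out each of the extra pieces of $\Omega - z_{n_j}$ — the thin strip attached to $A_{s_{n_j}}$ and the translates of all the other annuli — as contributing to the kernel at $z_0$. The combined effect of the vanishing strip half-width $1/n_j$ and the fixed $3$-spacing of the annulus centers $z_n$ is precisely what forces the kernel to collapse onto $A_s$, after which the lemma follows immediately from the stability of the Poincar\'e metric under Carath\'eodory kernel convergence.
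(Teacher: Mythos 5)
Your reduction to Carath\'eodory kernel convergence is sensible, and your verification that the kernel of $(\Omega - z_{n_j})_{j\geq 1}$ with respect to any $z_0 \in A_s$ is exactly $A_s$ is essentially correct: compact subsets of $A_s$ eventually lie in $A_{s_{n_j}} \subset \Omega - z_{n_j}$, and the description $(\star)$ keeps every point of $\partial A_s$ (hence, by connectedness of the kernel, every point of $\Cb \setminus A_s$) out of the kernel. This matches the geometric content of the paper's argument, whose upper bound is simply domain monotonicity, $K_\Omega(z+z_{n_j}, w+z_{n_j}) \leq K_{A_{s_{n_j}}}(z,w) \leq K_{A_s}(z,w)$.

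The gap is in the final implication ``$\rho_{\Omega - z_{n_j}} \to \rho_{A_s}$ locally uniformly on $A_s$, hence $K_{\Omega - z_{n_j}}(z,w) \to K_{A_s}(z,w)$.'' Locally uniform convergence of the densities on $A_s$ only yields $\limsup_j K_{\Omega - z_{n_j}}(z,w) \leq K_{A_s}(z,w)$ (integrate along a path inside $A_s$); here it is redundant, since that inequality already follows from monotonicity. For the reverse inequality, which is the whole point of the lemma, you must rule out nearly length-minimizing paths in $\Omega - z_{n_j}$ that \emph{leave} $A_s$: such a path can exit through the thin strip near $\pm 1$, travel to the translates of the other annuli, and return, and convergence of the densities \emph{on} $A_s$ says nothing about the length accrued outside $A_s$. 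To close this you need either a quantitative lower bound on $k_{\Omega - z_{n_j}}$ in the strips (e.g.\ via Lemma~\ref{lem:ext_cone}), or the genuinely classical form of the stability theorem, namely Hejhal's theorem that kernel convergence implies locally uniform convergence of the normalized universal covering maps $\Db \rightarrow \Omega - z_{n_j}$. With the covering maps in hand one writes $K_{\Omega - z_{n_j}}(z,w) = K_\Db(0, t_j)$ for a suitable $t_j$ in the fiber over $w$, extracts a locally uniform limit by Montel, and uses $(\star)$ to confine the image of the limit map to $A_s$ --- which is precisely the argument the paper carries out by hand. So your outline is salvageable, but as written the key lower bound is not justified.
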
 
\begin{proof} \textbf{Fix} $s \in (0,1)$, $s_{n_j} \nearrow s$,  and $z, w \in A_s$. Notice that 
\begin{align}
\limsup_{j \rightarrow \infty} K_\Omega(z+z_{n_j}, w+z_{n_j}) & \leq  \limsup_{j \rightarrow \infty} K_{z_{n_j}+A_{s_{n_j}}}(z+z_{n_j}, w+z_{n_j}) \notag \\
& =\limsup_{j \rightarrow \infty} K_{A_{s_{n_j}}}(z,w) \leq K_{A_s}(z,w) \; \; \forall z,w \in A_s
\label{eqn:upper_bound_dumb_limit}
\end{align}
since $\big(A_{s_{n_j}}\big)_{\!j\geq 1}$ is a nested sequence of annuli. 

Let $\varphi_j : \Db \rightarrow \Omega$ be a holomorphic covering map with $\varphi_j(0) = z+z_{n_j}$ and $\varphi_j(t_j) = w+z_{n_j}$ where $t_j \in (0,1)$ and $K_{\Db}(0,t_j) = K_\Omega(z+z_{n_j},w+z_{n_j})$. Notice that $\{t_j: j\geq 1\}$ is relatively compact in $[0,1)$ by \eqref{eqn:upper_bound_dumb_limit}.

Consider $\Phi_j : \Db \rightarrow \Cb$ given by $\Phi_j(\zeta) = -z_{n_j}+\varphi_j(\zeta)$. Then, $\Phi_j(0) = z$ and $\Phi_j(\Db) \subset \Omega \setminus \Bb(0,e^{-2s})$ when $j$ is sufficiently large (recall that $z\in A_s$). Then, by Montel's theorem, we can find a subsequence $(\Phi_{j_k})_{k \geq 1}$ such that 
\begin{equation}\label{eqn:interim}
\lim_{k \rightarrow \infty} K_{\Omega}\left(z+z_{n_{j_k}},w+z_{n_{j_k}}\right)
= \liminf_{j \rightarrow \infty} K_\Omega(z+z_{n_j}, w+z_{n_j}) 
\end{equation}
and $\Phi_{j_k}$ converges locally uniformly to a holomorphic map $\Phi : \Db \rightarrow \Cb$. Then 
$$
\Phi(\Db) \cap \Bb(0,2) \subset \overline{A_s} \cup \{ z \in \Cb : e^{-s} \leq \abs{z} \leq 2 \text{ and } \mathsf{Im}(z) = 0\}.
$$
Since $\Phi(\Db)$ is open, this implies that $\Phi(\Db) \cap \Bb(0,2) \subset A_s$. Since $\Phi(\Db)$ is connected, then $\Phi(\Db) \subset A_s$. Then, by \eqref{eqn:interim} and the definition of $t_j$, 
\[
\liminf_{j \rightarrow \infty} K_\Omega(z+z_{n_j}, w+z_{n_j}) 
=  \lim_{k \rightarrow \infty}K_{\Db}(0,t_{j_k}) \geq  \lim_{k \rightarrow \infty} K_{A_s}(\Phi(0),\Phi(t_{j_k})) = K_{A_s}(z,w). 
\]
From this and \eqref{eqn:upper_bound_dumb_limit}, the result follows.
\end{proof}

We now have all the ingredients for the second type of example alluded to above.

\begin{proposition}
Let $\Omega \subset \Cb$ be as described by $(\star)$. Then, $\Omega$ is a visibility domain, but $(\Omega, K_\Omega)$ is not Gromov hyperbolic.
\end{proposition}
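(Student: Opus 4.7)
The plan has two parts: visibility follows quickly from earlier results, and the main work is in refuting Gromov hyperbolicity. For visibility, I would note that $\Omega$ has $\Cc^{\infty}$ boundary by the construction $(\star)$ and is therefore Lipschitz, while $\Cb\setminus\Omega$ contains more than two points, so $\Omega$ is Kobayashi hyperbolic. Proposition~\ref{prop:planar} then shows every boundary point is a local Goldilocks point, so $\partial\Omega\setminus\partial_{{\rm lg}}\Omega=\emptyset$, and an application of Theorem~\ref{thm:visible} immediately yields that $\Omega$ is a visibility domain.

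For the failure of Gromov hyperbolicity, my strategy is to transplant the ``fat triangles'' available in the narrow annulus $A_s$ for small $s$ (Lemma~\ref{lem:there_is_an_s_where_not_thin}) into $\Omega$ by translating by $z_{n_j}$ along a subsequence $s_{n_j}\nearrow s$, controlling the relevant Kobayashi distances via Lemma~\ref{lem:convergence_on_each_ann}. Suppose for contradiction that $(\Omega, K_\Omega)$ is $\delta_0$-Gromov hyperbolic for some $\delta_0\geq 0$. Using Lemma~\ref{lem:there_is_an_s_where_not_thin}, pick $s\in (0,1)$ with $\delta(s)<-(\delta_0+1)$, and then $x,y,z,p\in A_s$ such that
$$
(x|y)_p^{A_s} - \min\bigl\{(x|z)_p^{A_s},\,(y|z)_p^{A_s}\bigr\} < -(\delta_0+1).
$$
By the hypothesis on $(s_n)_{n\geq 1}$, there is a subsequence $s_{n_j}\nearrow s$; since $A_{s_{n_j}}\nearrow A_s$, for all $j$ sufficiently large the four translates $x+z_{n_j},\dots,p+z_{n_j}$ lie in $z_{n_j}+A_{s_{n_j}}\subset\Omega$.

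Applying Lemma~\ref{lem:convergence_on_each_ann} to each of the six unordered pairs drawn from $\{x,y,z,p\}$, and recalling that every Gromov product is a linear combination of three Kobayashi distances, I obtain
$$
\lim_{j\to\infty}(u+z_{n_j}\,|\,v+z_{n_j})_{p+z_{n_j}}^{\Omega} \;=\; (u|v)_p^{A_s}
$$
for each $u,v\in\{x,y,z\}$. Hence, for all $j$ sufficiently large,
$$
\min\bigl\{(x\!+\!z_{n_j}|z\!+\!z_{n_j})_{p+z_{n_j}}^{\Omega},\,(y\!+\!z_{n_j}|z\!+\!z_{n_j})_{p+z_{n_j}}^{\Omega}\bigr\} - (x\!+\!z_{n_j}|y\!+\!z_{n_j})_{p+z_{n_j}}^{\Omega} \,>\, \delta_0,
$$
contradicting $\delta_0$-hyperbolicity of $(\Omega, K_\Omega)$. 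Since $\delta_0\geq 0$ was arbitrary, $(\Omega, K_\Omega)$ fails to be Gromov hyperbolic.

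The heavy lifting has already been carried out in Lemmas~\ref{lem:there_is_an_s_where_not_thin} and~\ref{lem:convergence_on_each_ann}; the only modest obstacle is the routine check that the translates lie in $\Omega$ for $j$ large (which follows from $A_{s_{n_j}}\nearrow A_s$) together with the bookkeeping that upgrades the six Kobayashi-distance limits to the required Gromov-product convergences.
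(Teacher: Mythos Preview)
Your proposal is correct and follows essentially the same approach as the paper: visibility via Proposition~\ref{prop:planar} and Theorem~\ref{thm:visible}, and non-Gromov-hyperbolicity by transplanting the fat configurations from $A_s$ into $\Omega$ via Lemmas~\ref{lem:there_is_an_s_where_not_thin} and~\ref{lem:convergence_on_each_ann}. The only cosmetic difference is that you phrase the second part as a contradiction against a putative $\delta_0$, whereas the paper argues directly that the four-point inequality fails for every $m$; the content is identical, and your final sentence ``Since $\delta_0\geq 0$ was arbitrary'' is superfluous once the contradiction is in hand.
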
 

\begin{proof} Fix $m \in \Nb$. By Lemma~\ref{lem:there_is_an_s_where_not_thin} there exist $s >0$ and $p,x,y,z \in A_s$ such that 
$$
(x|y)_{p}^{A_s} \leq \min \left\{ (x|z)_p^{A_s}, (y|z)_p^{A_s} \right\} - (m+1).
$$
Fix $n_j\rightarrow \infty$ such that $s_{n_j} \nearrow s$. Then by Lemma~\ref{lem:convergence_on_each_ann}
$$
(x+z_{n_j}|y+z_{n_j})_{p+z_{n_j}}^{\Omega} \leq \min \left\{ (x+z_{n_j}|z+z_{n_j})_{p+z_{n_j}}^{\Omega}, (y+z_{n_j}|z+z_{n_j})_{p+z_{n_j}}^{\Omega} \right\} - m
$$
for $j$ sufficiently large. 

Since $m \in \Nb$ was arbitrary, $(\Omega, K_\Omega)$ is not Gromov hyperbolic. However, as $\Omega$ has smooth boundary, by Proposition~\ref{prop:planar} and Theorem~\ref{thm:visible}, $\Omega$ is a visibility domain.
\end{proof}

\section{The relationship between Gromov hyperbolicity and the weak visibility property}
While this section is devoted to the proof of Theorem~\ref{thm:Gromov_visi}, we shall take a detour to present an alternative formulation for $(\Omega, K_{\Omega})$ to be Gromov hyperbolic. This result provides a clean and intuitive proof of Theorem~\ref{thm:Gromov_visi}. Such a result may also be of independent interest. Thus, we begin with:

\subsection{A slim-triangles formulation of Gromov hyperbolicity of $\boldsymbol{(\Omega, K_{\Omega})}$}\label{ssec:slim}
We begin with a pair of definitions inspired by our notion of $(1,\kappa)$-almost-geodesics.

\begin{definition}
Let $\Omega \subset \Cb^d$ be a Kobayashi hyperbolic domain and $I \subset \Rb$ be an interval. For $\kappa \geq 0$, a map $\sigma:I \to \Omega$ is called a \emph{$(1, \kappa)$-near-geodesic} if $\sigma$ is continuous and for all $s,t \in I$,  
$$
|t-s| - \kappa \leq K_{\Omega}(\sigma(s), \sigma(t)) \leq |t-s| +  \kappa.
$$
\end{definition}

\begin{definition}\label{def:admissible}
Let $\Omega \subset \Cb^d$ be a Kobayashi hyperbolic domain. Fix $\kappa \geq 0$. We shall call a family $\mathscr{F}$ of
$(1, \kappa)$-near-geodesics a \emph{$\kappa$-admissible family} if the following conditions are satisfied:
\begin{itemize}
\item for each pair of distinct points $z, w\in \Omega$, there exists a $(1, \kappa)$-near-geodesic $\sigma \in \mathscr{F}$, $\sigma : [0, T] \rightarrow \Omega$, such that $\sigma(0) = z$ and $\sigma(T) = w$;

\item if a $(1, \kappa)$-near-geodesic $\sigma : [0, T] \rightarrow \Omega$ is in $\mathscr{F}$, then so is $\wt{\sigma}$, where
$$
\wt{\sigma}(t) := \sigma(T-t), \quad t\in [0, T].
$$
\end{itemize}
\end{definition}

For brevity, if $z, w\in \Omega$ (not necessarily distinct), we shall use the notation $\nrg{\sigma}{z}{w}$ to denote that $\sigma : [0, T] \rightarrow \Omega$ is a $(1, \kappa)$-near-geodesic, for some specified $\kappa \geq 0$, joining $z$ and $w$. Whether [$\sigma(0) = z$ and $\sigma(T) = w$] or [$\sigma(T) = w$ and $\sigma(0) = z$] will be clear from the context. The notion introduced in Definition~\ref{def:admissible} is non-vacuous because, in view of Proposition~\ref{prop:almost_geod_exist}, given a $\kappa > 0$, the family of all $(1, \kappa)$-almost-geodesics is a $\kappa$-admissible family. The above discussion sets the stage for the principal collection of concepts of this section.

\begin{definition}\label{defn:triangles}
Let $\Omega \subset \Cb^d$ be a Kobayashi hyperbolic domain. Fix $\kappa \geq 0$.
\begin{enumerate}
\item\label{item:kappa_t} A \emph{$\kappa$-triangle} is a triple of $(1, \kappa)$-near-geodesics $\nrg{\alpha}{b}{c}$, $\nrg{\beta}{a}{c}$ and $\nrg{\gamma}{a}{b}$, where $a, b, c\in \Omega$. We call $a$, $b$ and $c$ the \emph{vertices}, and the \textbf{images} of $\alpha$, $\beta$ and $\gamma$\,---\,which, by a mild abuse of notation, we shall also denote simply as $\alpha$, $\beta$ and $\gamma$, respectively\,---\,the \emph{sides} of this $\kappa$-triangle.

\item Let $\delta \geq 0$. A $\kappa$-triangle $\triangle$ is said to be \emph{$\delta$-slim} if, for each side $\sigma$ of $\triangle$,
$$
  \sigma \subset \bigcup\nolimits_{\Sigma \in \triangle\setminus \{\sigma\}} \{z \in \Omega : K_{\Omega}(z, \Sigma) \leq \delta\}
$$

\item Let $\mathscr{F}$ be a $\kappa$-admissible family of $(1, \kappa)$-near-geodesics. We say that $\Omega$ satisfies the \emph{$(\mathscr{F}, \kappa, \delta)$-Rips condition}, where $\delta \geq 0$, if every $\kappa$-triangle in $\Omega$ whose sides are (the images of) $(1, \kappa)$-near-geodesics belonging to $\mathscr{F}$ is $\delta$-slim.
\end{enumerate}
\end{definition}

We first need a couple of lemmas.

\begin{lemma}\label{lem:Gp_estimate}
Let $\Omega \subset \Cb^d$ be a Kobayashi hyperbolic domain. Fix $\kappa \geq 0$ and fix a $\kappa$-admissible family $\mathscr{F}$ of $(1,\kappa)$-near-geodesics. For any $\sigma : [0, T] \rightarrow \Omega$ in $\mathscr{F}$ and any $z \in \Omega$, we have $\Gp{\sigma(0)}{\sigma(T)}{z} \leq K_{\Omega}(z, \sigma) + 3\kappa/2$.
\end{lemma}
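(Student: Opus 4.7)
The plan is to unwind the definition of the Gromov product and then apply the triangle inequality at a well-chosen point on $\sigma$. Since $\Gp{\sigma(0)}{\sigma(T)}{z} = \tfrac{1}{2}\bigl(K_\Omega(z,\sigma(0)) + K_\Omega(z,\sigma(T)) - K_\Omega(\sigma(0),\sigma(T))\bigr)$, the target inequality is equivalent to
$$
K_\Omega(z,\sigma(0)) + K_\Omega(z,\sigma(T)) - K_\Omega(\sigma(0),\sigma(T)) \;\leq\; 2K_\Omega(z, \sigma) + 3\kappa.
$$

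The first step is to choose, for each $\varepsilon > 0$, a parameter $s \in [0,T]$ with $K_\Omega(z,\sigma(s)) \leq K_\Omega(z,\sigma) + \varepsilon$ (such an $s$ exists by the definition of $K_{\Omega}(z,\sigma)$; in fact, continuity of $\sigma$ on a compact interval together with continuity of $K_{\Omega}$ makes the infimum attained, but the approximation version suffices). Two applications of the triangle inequality, through $\sigma(s)$, give
$$
K_\Omega(z,\sigma(0)) + K_\Omega(z,\sigma(T)) \;\leq\; 2\,K_\Omega(z, \sigma(s)) + K_\Omega(\sigma(s),\sigma(0)) + K_\Omega(\sigma(s),\sigma(T)).
$$

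Next I would invoke the $(1,\kappa)$-near-geodesic property. The upper bound yields $K_\Omega(\sigma(s),\sigma(0)) \leq s + \kappa$ and $K_\Omega(\sigma(s),\sigma(T)) \leq (T-s) + \kappa$, whose sum is $T + 2\kappa$. The lower bound, applied to the pair $(0,T)$, gives $K_\Omega(\sigma(0),\sigma(T)) \geq T - \kappa$, so $T + 2\kappa \leq K_\Omega(\sigma(0),\sigma(T)) + 3\kappa$. Substituting back and using the choice of $s$ yields
$$
K_\Omega(z,\sigma(0)) + K_\Omega(z,\sigma(T)) - K_\Omega(\sigma(0),\sigma(T)) \;\leq\; 2\bigl(K_\Omega(z,\sigma) + \varepsilon\bigr) + 3\kappa.
$$
Letting $\varepsilon \searrow 0$ and dividing by $2$ completes the proof.

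I do not anticipate any real obstacle; this is essentially bookkeeping. The only thing to watch is using both directions of the near-geodesic inequality (upper bound on the two pieces through $\sigma(s)$, lower bound on the full segment) so that the $\kappa$ terms combine to exactly the constant $3\kappa/2$ after dividing by $2$.
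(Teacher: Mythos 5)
Your argument is correct and is essentially identical to the paper's proof: both pick a (near-)minimizer $\sigma(s)$ of $K_\Omega(z,\sigma(\cdot))$, apply the triangle inequality through that point, and then use the upper near-geodesic bound on the two subsegments together with the lower bound on the full segment to convert $T+2\kappa$ into $K_\Omega(\sigma(0),\sigma(T))+3\kappa$. The only cosmetic difference is your use of an $\varepsilon$-approximate minimizer, whereas the paper takes the minimum to be attained (which, as you note, it is).
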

\begin{proof}
Fix $\sigma \in \mathscr{F}$ and $z \in \Omega$. Let $\tau \in [0, T]$ be such that $K_{\Omega}(z, \sigma) = K_{\Omega}(z, \sigma(\tau))$. We estimate:
\begin{align*}
K_{\Omega}(\sigma(0), z) + K_{\Omega}(z, \sigma(T))
&\leq K_{\Omega}(\sigma(0), \sigma(\tau)) + K_{\Omega}(\sigma(\tau), \sigma(T)) + 2K_{\Omega}(z, \sigma) \\
&\leq \big(|\tau - 0| + |T - \tau| + 2\kappa \big) + 2K_{\Omega}(z, \sigma) \\
&\leq K_{\Omega}(\sigma(0), \sigma(T)) + 3\kappa + 2K_{\Omega}(z, \sigma).
\end{align*}
The second and third inequalities above are due to the fact that $\sigma$ is a $(1, \kappa)$-near-geodesic.
Thus
$$
\frac{K_{\Omega}(\sigma(0), z) + K_{\Omega}(z, \sigma(T))-K_{\Omega}(\sigma(0), \sigma(T))}{2}
\leq \frac{3\kappa}{2} + K_{\Omega}(z, \sigma).
$$
As $\sigma$ and $z$ were picked arbitrarily, the result follows.
\end{proof}
  
\begin{lemma}\label{lem:technical}
Let $\Omega \subset \Cb^d$, $\kappa \geq 0$ and $\mathscr{F}$ be as in Lemma~\ref{lem:Gp_estimate}. Suppose
\begin{itemize}
\item[$(\blz)$] for any $\sigma : [0, T] \rightarrow \Omega$ in $\mathscr{F}$, any point $p\in \sigma([0, T])$ and any $z \in \Omega$, we have
$$
\min\left\{ \Gp{\sigma(0)}{z}{p}, \Gp{z}{\sigma(T)}{p} \right\} \leq \delta.
$$
\end{itemize}
Then, we have
$$
K_{\Omega}(z, \sigma) \leq \Gp{\sigma(0)}{\sigma(T)}{z} + 2\delta + 3\kappa/2
$$
for every $\sigma : [0, T] \rightarrow \Omega$ in $\mathscr{F}$ and every $z \in \Omega$
\end{lemma}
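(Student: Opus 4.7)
The plan is to locate a point $p$ on the image of $\sigma$ at which the two Gromov products $\Gp{\sigma(0)}{z}{p}$ and $\Gp{z}{\sigma(T)}{p}$ coincide. Hypothesis $(\blz)$ will then force both of these to be at most $\delta$ (rather than just one of them, as one would get at an arbitrary $p$), and combining the two resulting inequalities with the $(1, \kappa)$-near-geodesic estimates for $\sigma$ will deliver the claimed bound.

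To produce such a $p$, I would use the intermediate value theorem. Define
$$
f(s) := \Gp{\sigma(0)}{z}{\sigma(s)} - \Gp{z}{\sigma(T)}{\sigma(s)}, \quad s\in [0, T].
$$
Since $\sigma$ is continuous and $K_{\Omega}$ is continuous in both arguments, $f$ is continuous. Non-negativity of the Gromov product gives $f(0) \leq 0$ and $f(T) \geq 0$, so there is some $s^* \in [0, T]$ with $f(s^*) = 0$. Set $p := \sigma(s^*)$; by $(\blz)$, both $\Gp{\sigma(0)}{z}{p}$ and $\Gp{z}{\sigma(T)}{p}$ are then at most $\delta$.

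Unpacking each of these two inequalities via the definition of the Gromov product and adding yields
$$
2K_{\Omega}(z, p) \;\leq\; K_{\Omega}(\sigma(0), z) + K_{\Omega}(\sigma(T), z) - \bigl(K_{\Omega}(p, \sigma(0)) + K_{\Omega}(p, \sigma(T))\bigr) + 4\delta.
$$
The remaining ingredient is the lower bound $K_{\Omega}(p, \sigma(0)) + K_{\Omega}(p, \sigma(T)) \geq K_{\Omega}(\sigma(0), \sigma(T)) - 3\kappa$, which follows from applying the $(1, \kappa)$-near-geodesic inequalities on $[0, s^*]$ and $[s^*, T]$ to the left-hand side and on $[0, T]$ to the right-hand side. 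Plugging this in and dividing by $2$ gives $K_{\Omega}(z, p) \leq \Gp{\sigma(0)}{\sigma(T)}{z} + 2\delta + 3\kappa/2$, and since $K_{\Omega}(z, \sigma) \leq K_{\Omega}(z, p)$, the lemma follows.

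The main subtlety is precisely this choice of $p$. Taking $p$ to be a point on $\sigma$ that realizes $K_{\Omega}(z, \sigma)$\,---\,which is what one tries first\,---\,yields from $(\blz)$ only that \emph{one} of the two Gromov products is at most $\delta$, and a short computation shows this is not enough to recover the claimed bound. The symmetrizing IVT argument, which controls both Gromov products simultaneously, is what drives the proof.
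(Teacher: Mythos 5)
Your proposal is correct and follows essentially the same route as the paper: both arguments hinge on producing a single point $p$ on the image of $\sigma$ at which \emph{both} Gromov products $\Gp{\sigma(0)}{z}{p}$ and $\Gp{z}{\sigma(T)}{p}$ are at most $\delta$, and then run the identical chain of estimates using the $(1,\kappa)$-near-geodesic inequalities. The paper finds this point by writing $[0,T]$ as a union of two non-empty closed sets and invoking connectedness, whereas you apply the intermediate value theorem to the difference of the two Gromov products; these are the same connectedness idea in slightly different clothing.
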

\begin{proof}
Fix $\sigma \in \mathscr{F}$ and $z \in \Omega$. By $(\blz)$, $[0,T]$ is the union
$\{t : \Gp{\sigma(0)}{z}{\sigma(t)}\leq \delta\}\cup \{t : \Gp{z}{\sigma(T)}{\sigma(t)}\leq \delta\}$
of non-empty closed sets. So, there is a $\tau \in [0, T]$ such that  
$$
K_{\Omega}(z, \sigma(\tau)) + K_{\Omega}(q, \sigma(\tau)) - K_{\Omega}(z, q) \leq 2\delta
$$
for $q = \sigma(0), \sigma(T)$. Thus:
\begin{align*}
2K_{\Omega}(z, \sigma) &\leq 2K_{\Omega}(z, \sigma(\tau)) \\
&\leq 4\delta + K_{\Omega}(z, \sigma(0)) + K_{\Omega}(z, \sigma(T))
	- \big( K_{\Omega}(\sigma(0), \sigma(\tau)) + K_{\Omega}(\sigma(\tau), \sigma(T)) \big) \\
&\leq 4\delta + K_{\Omega}(z, \sigma(0)) + K_{\Omega}(z, \sigma(T)) 
	- \big( |\tau - 0| + |T - \tau| - 2\kappa \big) \\
&\leq 4\delta + K_{\Omega}(z, \sigma(0)) + K_{\Omega}(z, \sigma(T)) - K_{\Omega}(\sigma(0), \sigma(T)) + 3\kappa \\
&= 4\delta + 2\Gp{\sigma(0)}{\sigma(T)}{z} + 3\kappa.
\end{align*}
The third and fourth inequalities above are due to the fact that $\sigma$ is a $(1, \kappa)$-near-geodesic. As $\sigma \in \mathscr{F}$ and $z \in \Omega$ were arbitrarily chosen, the result follows.
\end{proof}

Lemma~\ref{lem:technical} assists in the key result of this section. This result is an equivalent formulation of Gromov hyperbolicity of $(\Omega, K_{\Omega})$, $\Omega$ being Kobayashi hyperbolic, in terms of a slim-triangles criterion\,---\,resembling what is known for proper Gromov hyperbolic spaces\,---\,\textbf{without} any assumption on properness of $(\Omega, K_{\Omega})$. If $(\Omega, K_{\Omega})$ is Gromov hyperbolic, then we say that $(\Omega, K_{\Omega})$ is \emph{$\delta$-hyperbolic} to refer to the constant $\delta \geq 0$ that appears in the inequality \eqref{eq:Gromov_ineq} defining Gromov hyperbolicity. 

\begin{theorem}\label{thm:equiv_GH}
Let $\Omega \subset \Cb^d$ be a Kobayashi hyperbolic domain. Let $\kappa \geq 0$ be such that there exists some $\kappa$-admissible family of $(1, \kappa)$-near-geodesics. Then:
\begin{enumerate}
\item\label{item:infer_slim} Suppose $(\Omega, K_{\Omega})$ is $\delta$-hyperbolic for some $\delta \geq 0$. Then, for any $\kappa$-admissible family $\mathscr{F}$, $\Omega$ satisfies the $(\mathscr{F}, \kappa, 3\delta+6\kappa)$-Rips condition.

\item\label{item:infer_hyp} Suppose, for some $\kappa$-admissible family $\mathscr{F}$, $\Omega$ satisfies the $(\mathscr{F}, \kappa, \delta)$-Rips condition. Then $(\Omega, K_{\Omega})$ is $(3\delta + 6\kappa)$-hyperbolic.
\end{enumerate}
\end{theorem}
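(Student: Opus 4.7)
The plan is to pivot both directions on condition $(\blz)$ from Lemma~\ref{lem:technical}: combined with Lemma~\ref{lem:Gp_estimate}, it yields a two-sided comparison $|K_\Omega(z,\sigma) - \Gp{\sigma(0)}{\sigma(T)}{z}| \leq O(\delta+\kappa)$ whenever $\sigma \in \mathscr{F}$, which bridges the slim-triangles and Gromov-product formulations. Thus my first step, common to both implications, is to show that under either hypothesis of the theorem the condition $(\blz)$ holds for every $\sigma \in \mathscr{F}$ with constant $\delta_0 := \delta + 3\kappa/2$. For assertion~(\ref{item:infer_slim}) this is nearly free: Lemma~\ref{lem:Gp_estimate} forces $\Gp{\sigma(0)}{\sigma(T)}{p} \leq 3\kappa/2$ whenever $p \in \sigma$, so Gromov $\delta$-hyperbolicity at base-point $p$ bounds $\min\{\Gp{\sigma(0)}{z}{p}, \Gp{z}{\sigma(T)}{p}\}$ by $\delta_0$. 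For assertion~(\ref{item:infer_hyp}) I would form a $\kappa$-triangle with vertices $\sigma(0), \sigma(T), z$ whose sides are $\sigma$ together with $\eta, \zeta \in \mathscr{F}$ from $z$ to $\sigma(0)$ and from $z$ to $\sigma(T)$; the Rips hypothesis places $p$ within $\delta$ of $\eta$ or of $\zeta$, and, writing the close point as $\eta(s)$ (resp.\ $\zeta(s)$), the near-geodesic inequalities $K_\Omega(z,\eta(s)) \leq s + \kappa$, $K_\Omega(\eta(s),\sigma(0)) \leq T_\eta - s + \kappa$, and $T_\eta \leq K_\Omega(z,\sigma(0)) + \kappa$ combine cleanly to give $\Gp{\sigma(0)}{z}{p} \leq \delta_0$ (resp.\ $\Gp{z}{\sigma(T)}{p} \leq \delta_0$). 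Once $(\blz)$ is in place with constant $\delta_0$, Lemma~\ref{lem:technical} yields $K_\Omega(z, \sigma) \leq \Gp{\sigma(0)}{\sigma(T)}{z} + 2\delta + 9\kappa/2$ for every $z \in \Omega$ and every $\sigma \in \mathscr{F}$.

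For assertion~(\ref{item:infer_slim}), I consider a $\kappa$-triangle with vertices $a, b, c$ and sides $\alpha, \beta, \gamma$ in $\mathscr{F}$, and any $p \in \gamma$. Lemma~\ref{lem:Gp_estimate} applied to $\gamma$ with $z = p$ gives $\Gp{a}{b}{p} \leq 3\kappa/2$, and the Gromov inequality at base-point $p$ forces at least one of $\Gp{a}{c}{p}, \Gp{b}{c}{p}$ to be at most $\delta_0$. Applying Lemma~\ref{lem:technical} to the corresponding side at $z = p$ produces $K_\Omega(p, \beta) \leq \delta_0 + 2\delta + 9\kappa/2 = 3\delta + 6\kappa$ (or the symmetric bound for $\alpha$), which is exactly the $(\mathscr{F},\kappa,3\delta+6\kappa)$-Rips condition.

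For assertion~(\ref{item:infer_hyp}), I fix $a, b, c, o \in \Omega$ and choose near-geodesics $\sigma_{ab}, \sigma_{bc}, \sigma_{ac} \in \mathscr{F}$ joining the respective pairs. The Rips condition on the $\kappa$-triangle with these sides guarantees that every point of $\sigma_{ac}$ lies within $\delta$ of $\sigma_{ab} \cup \sigma_{bc}$; picking $p \in \sigma_{ac}$ to minimize $K_\Omega(o, \cdot)$ (possible by continuity of $\sigma_{ac}$ on a compact interval) and a $q \in \sigma_{ab} \cup \sigma_{bc}$ within $\delta$ of $p$ gives
\[
\min\{K_\Omega(o, \sigma_{ab}), K_\Omega(o, \sigma_{bc})\} \leq K_\Omega(o, q) \leq K_\Omega(o, \sigma_{ac}) + \delta.
\]
Applying Lemma~\ref{lem:Gp_estimate} to bound $\Gp{a}{b}{o}$ and $\Gp{b}{c}{o}$ by $K_\Omega(o, \sigma_{ab}) + 3\kappa/2$ and $K_\Omega(o, \sigma_{bc}) + 3\kappa/2$ respectively, and then Lemma~\ref{lem:technical} to bound $K_\Omega(o, \sigma_{ac})$ by $\Gp{a}{c}{o} + 2\delta + 9\kappa/2$, and chaining the three inequalities, one obtains $\min\{\Gp{a}{b}{o}, \Gp{b}{c}{o}\} \leq \Gp{a}{c}{o} + 3\delta + 6\kappa$, which is precisely Gromov $(3\delta+6\kappa)$-hyperbolicity.

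The step I expect to be the main obstacle is the verification of $(\blz)$ from the Rips condition: one must convert a $K_\Omega$-distance bound $K_\Omega(p, q) \leq \delta$ at an unspecified point $q \in \eta$ into an inequality among Gromov products at the variable base-point $p$. The key is to track the parameter $s$ of $\eta$ so that it cancels in the sum $K_\Omega(z, q) + K_\Omega(q, \sigma(0))$, leaving $K_\Omega(z, \sigma(0))$ plus additive constants depending only on $\delta$ and $\kappa$; with careful bookkeeping the constant $\delta_0 = \delta + 3\kappa/2$ emerges. Everything else in the argument is a formal chase of constants through Lemmas~\ref{lem:Gp_estimate} and~\ref{lem:technical}.
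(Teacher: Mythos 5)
Your proposal is correct and follows essentially the same route as the paper: both directions pivot on establishing condition $(\blz)$ with constant $\delta+3\kappa/2$ (via the $\delta$-hyperbolicity inequality at a base-point on $\sigma$ in one direction, and via the Rips condition applied to the triangle with vertices $\sigma(0)$, $\sigma(T)$, $z$ in the other), after which Lemmas~\ref{lem:Gp_estimate} and~\ref{lem:technical} convert between Gromov products and distances to sides, with the constants chasing through exactly as you indicate.
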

\begin{proof}
We first prove (\ref{item:infer_slim}). Thus, assume that $(\Omega, K_{\Omega})$ is $\delta$-hyperbolic for some $\delta \geq 0$. Next, \textbf{fix} a $\kappa$-admissible family $\mathscr{F}$. Consider an arbitrary $\sigma :  [0, T] \rightarrow \Omega$ in $\mathscr{F}$, an arbitrary point $p\in \sigma([0, T])$ and an arbitrary point $z \in \Omega$. Let $\tau \in [0, T]$ be such that $\sigma(\tau) = p$. By $\delta$-hyperbolicity,
$$
\min\left\{ \Gp{\sigma(0)}{z}{p}, \Gp{z}{\sigma(T)}{p} \right\} 
\leq \Gp{\sigma(0)}{\sigma(T)}{p} + \delta
\leq 3\kappa/2 + \delta, 
$$
where the second inequality follows from Lemma~\ref{lem:Gp_estimate} by taking $z=p$ therein. We have just established that
\begin{equation}\label{eq:diamond_1}
\text{the condition $(\blz)$ holds with the parameter $\delta$ replaced by $(\delta + 3\kappa/2)$.}
\end{equation}

Now, consider a $\kappa$-triangle $\triangle$ whose vertices are $a$, $b$ and $c$; whose sides $\alpha$, $\beta$ and $\gamma$ belong to $\mathscr{F}$; and where the labels for these sides are as described in Definition~\ref{defn:triangles}-(\ref{item:kappa_t}). It suffices to show that $K_{\Omega}(p, \beta\cup \gamma) \leq (3\delta + 6\kappa)$ for an arbitrary $p$ belonging to the image of $\alpha$. In view of \eqref{eq:diamond_1} and Lemma~\ref{lem:technical}, we have
\begin{align*}
K_{\Omega}(p, \beta) &\leq \Gp{a}{c}{p} + 2\delta + 9\kappa/2 \; \text{ and} \\
K_{\Omega}(p, \gamma) &\leq \Gp{a}{b}{p} + 2\delta + 9\kappa/2,
\end{align*}
whence, by $\delta$-hyperbolicity, it follows from the above that
$$
K_{\Omega}(p, \beta\cup \gamma) \leq \Gp{b}{c}{p} + 3\delta + 9\kappa/2.
$$
Recall: $p$ is in the image of the $(1,\kappa)$-near-geodesic $\nrg{\alpha}{b}{c}$. Thus, by Lemma~\ref{lem:Gp_estimate}, $\Gp{b}{c}{p} \leq 3\kappa/2$. So, from the last inequality, we have
$$
K_{\Omega}(p, \beta\cup \gamma) \leq 3\delta + 6\kappa.
$$
As remarked above, this suffices to show that $\triangle$ is $(3\delta + 6\kappa)$-slim. It follows that $\Omega$ satisfies the $(\mathscr{F}, \kappa, 3\delta+6\kappa)$-Rips condition.

To prove (\ref{item:infer_hyp}), suppose $\mathscr{F}$ is a $\kappa$-admissible family such that $\Omega$ satisfies the $(\mathscr{F}, \kappa, \delta)$-Rips condition. Consider an arbitrary $\sigma :  [0, T] \rightarrow \Omega$ in $\mathscr{F}$, an arbitrary point $p\in \sigma([0, T])$ and an arbitrary point $z \in \Omega$. By $\kappa$-admissibility, there exist $(1,\kappa)$-near-geodesics
$$
\nrg{\rho}{z}{\sigma(0)} \quad\text{and}
\quad \nrg{\upsilon}{z}{\sigma(T)},
$$
whence $\rho, \sigma$ and $\upsilon$ form the sides of a (perhaps degenerate) $\kappa$-triangle. As $p$ lies in $\sigma$, by hypothesis there is a point $q \in \rho \cup \upsilon$ such that $K_{\Omega}(p, q) \leq \delta$. Assume that $q \in \rho$. Suppose $\rho : [0, R] \to \Omega$ is such that  $\rho(0) = z$, and let $\tau \in [0, R]$ be such that $\rho(\tau) = q$. We compute, using a by-now-familiar move for the second inequality: 
\begin{align*}
K_{\Omega}(\sigma(0), p) + K_{\Omega}(z, p)
	&\leq K_{\Omega}(\sigma(0), q) + K_{\Omega}(q, p) + K_{\Omega}(p, q) + K_{\Omega}(q, z) \\
	&\leq 2\delta + K_{\Omega}(\sigma(0), z) + 3\kappa \\
\Rightarrow\,\Gp{\sigma(0)}{z}{p} &\leq \delta + 3\kappa/2.
\end{align*}
The above inequality was obtained under the assumption that $q \in \rho$. If, instead, $q\in \upsilon$ then we would get the inequality $\Gp{\sigma(T)}{z}{p} \leq \delta + 3\kappa/2$. Since at least one of the last two inequalities must hold, we get
$$
\min\left\{ \Gp{\sigma(0)}{z}{p}, \Gp{z}{\sigma(T)}{p} \right\} \leq \delta + 3\kappa/2,
$$
which leads to the conclusion:
\begin{equation}\label{eq:diamond_2}
\text{the condition $(\blz)$ holds with the parameter $\delta$ replaced by $(\delta + 3\kappa/2)$.}
\end{equation}

Now, consider points $a, b, c, o\in \Omega$. Form a $\kappa$-triangle $\triangle$ whose vertices are $a$, $b$ and $c$; whose sides $\alpha$, $\beta$ and $\gamma$ belong to $\mathscr{F}$, and where the labels for these sides are as described in Definition~\ref{defn:triangles}-(\ref{item:kappa_t}). By Lemma~\ref{lem:Gp_estimate}, we have
$$
\Gp{a}{b}{o} \leq K_{\Omega}(o, \gamma) + 3\kappa/2 \quad\text{and}
\quad \Gp{b}{c}{o} \leq K_{\Omega}(o, \alpha) +3\kappa/2.
$$
Owing to the $(\mathscr{F}, \kappa, \delta)$-Rips condition, there exist points $p_{\alpha}$ and $p_{\gamma}$ that lie in the sides $\alpha$ and $\gamma$, respectively, of $\triangle$ such that
$$
K_{\Omega}(o, p_{\alpha}) \leq  K(o, \beta) + \delta \quad\text{and}
\quad K_{\Omega}(o, p_{\gamma}) \leq  K(o, \beta) + \delta.
$$
By the last four inequalities
$$
\min\left\{ \Gp{a}{b}{o}, \Gp{b}{c}{o} \right\} \leq K(o, \beta) + \delta + 3\kappa/2.
$$
This implies\,---\,in view of \eqref{eq:diamond_2} and Lemma~\ref{lem:technical}\,---\,that
$$
\min\left\{ \Gp{a}{b}{o}, \Gp{b}{c}{o} \right\} \leq \Gp{a}{c}{o} + 2\big(\delta + 3\kappa/2 \big) + \delta + 3\kappa.
$$
As $a, b, c$ and $o$ were arbitrary, it follows that $(\Omega, K_{\Omega})$ is $(3\delta + 6\kappa)$-hyperbolic. 
\end{proof}

\subsection{The proof of Theorem~\ref{thm:Gromov_visi}}\label{ssec:Gromov_visi}
In this proof, $o \in \Omega$ is some point which will stay fixed. Furthermore:
\begin{itemize}
\item the notation ``$\glim p$'' will denote approach to $p \in \partial_{G}\Omega$ with respect to
the topology on $(\Omega \cup \partial_{G}\Omega)$,
\item the notation ``$\rightarrow \xi$'' will denote approach to $\xi \in \partial\overline{\Omega}^{End}$  with respect to the topology on $\overline{\Omega}^{End}$,
\item $\lim$ will denote limits in all contexts \emph{other than} the topology on $(\Omega \cup \partial_{G}\Omega)$. 
\end{itemize}

Let $\delta\geq 0$ be such that $(\Omega, K_{\Omega})$ is $\delta$-hyperbolic. Let $\wt{{\sf id}}_{\Omega}$ denote the homeomorphism from $(\Omega \cup \partial_{G}\Omega)$ onto $\overline{\Omega}^{End}$ which extends ${\sf id}_{\Omega}$.
\medskip

\noindent{{\bf Claim~I.} \emph{$(\Omega, K_{\Omega})$ is Cauchy-complete}.}

\noindent{Let $(z_n)_{n\geq 1}\subset \Omega$ be a sequence having no limit points in $\Omega$. Since $\overline{\Omega}^{End}$ is compact, we may assume without loss of generality that $z_n \rightarrow \xi$ for some $\xi \in \partial\overline{\Omega}^{End}$. Consider any subsequence $(z_{n_k})_{k\geq 1}$ such that $\big(K_{\Omega}(o, z_{n_k})\big)_{k\geq 1}$ is convergent in $[0, +\infty]$. By hypothesis, $(\Omega \cup \partial_{G}\Omega)$ is compact. So, we may assume, by passing to a subsequence and relabeling if needed, that there exists a point $p \in \partial_{G}\Omega$ such that
$$
z_{n_k}\!\!\glim p \quad \text{and}
\quad \wt{{\sf id}}_{\Omega}(p) = \xi.
$$
By Result~\ref{res:Gromov_comptn}-\eqref{item:convg}, $(z_{n_k})_{k\geq 1}$ is a Gromov sequence. So:
$$
+\infty = \liminf_{j, k\to \infty}\Gp{z_{n_j}}{z_{n_k}}{o} 
\leq \liminf_{k\to \infty}\Gp{z_{n_k}}{z_{n_k}}{o} =  \liminf_{k\to \infty}K_{\Omega}(o, z_{n_k}).
$$
Hence, the function $K_{\Omega}(o, \bcdot)$ is proper. By Proposition~\ref{prop:Hopf_Rinow}, $(\Omega, K_{\Omega})$ is Cauchy-complete.}
\medskip
 
\noindent{{\bf Claim~II.} \emph{$\Omega$ is a weak visibility domain.}}

\noindent{Assume, aiming for a contradiction, that $\Omega$ is not a weak visibility domain. Then, there exist a constant $\kappa \geq 0$, points $\xi, \eta \in \partial\overline{\Omega}^{End}$, $\overline{\Omega}^{End}$-open neighborhoods $V_{\xi}$ and $V_{\eta}$ of $\xi$ and $\eta$, respectively, such that $\clos{V_{\xi}} \cap \clos{V_{\eta}} = \emptyset$ and a sequence of $(1, \kappa)$-almost-geodesics $(\sigma_n)_{n\geq1}$, $\sigma_n: [0, T_n] \rightarrow \Omega$ such that
$$
\sigma_n(0) \in V_{\xi}, \ \sigma(T_n) \in V_{\eta} \; \forall n \geq 1, \quad\text{and}
\quad \lim_{n\to \infty}K_{\Omega}(o, \sigma_n) = +\infty.
$$
In the last assertion, we have used the fact that $K_{\Omega}(o, \bcdot)$ is proper. Passing to a subsequence and relabeling if needed, we may assume:
\begin{itemize}
\item $\sigma_n(0) \rightarrow \xi^\prime$ and $\sigma_n(T_n) \rightarrow \eta^\prime$, where $\xi^\prime, \eta^\prime \in \partial\overline{\Omega}^{End}$;
\item $\big(K_{\Omega}(o, \sigma_n(0))\big)_{n\geq 1}$ and $\big(K_{\Omega}(o, \sigma_n(T_n))\big)_{n\geq 1}$ are monotone increasing sequences.
\end{itemize}
Since $\clos{V_{\xi}} \cap \clos{V_{\eta}} = \emptyset$, $\xi^\prime \neq \eta^\prime$. Write:
$$
z_n := \sigma_n(0) \quad\text{and}
\quad w_n := \sigma_n(T_n).
$$
By our hypothesis (note that $z_n \rightarrow \xi^\prime$ and $w_n \rightarrow \eta^\prime$ in $\overline{\Omega}^{End}$) and by Result~\ref{res:Gromov_comptn}-\eqref{item:convg}, we have
$$
[(z_n)_{n \geq 1}] = \wt{{\sf id}}_{\Omega}^{-1}(\xi^\prime) \quad\text{and}
\quad  [(w_n)_{n \geq 1}] = \wt{{\sf id}}_{\Omega}^{-1}(\eta^\prime).
$$
Thus, $[(z_n)_{n \geq 1}] \neq [(w_n)_{n \geq 1}]$ and so, by definition
$$
\limsup_{m, n\to \infty}\Gp{z_m}{w_n}{o} < +\infty.
$$
In particular, therefore
\begin{equation}\label{eq:fin_prod_lim}
\limsup_{n\to \infty}\Gp{z_n}{w_n}{o} < +\infty.
\end{equation}}

Now, fix (we make use of Claim~I once again, together with the Hopf--Rinow theorem: Result~\ref{res:H-R})
\begin{itemize}
\item some geodesic of $(\Omega, K_{\Omega})$ joining $o$ and $z_n$ and call it $\rho_n$;
\item some geodesic of $(\Omega, K_{\Omega})$ joining $o$ and $w_n$ and call it $\upsilon_n$.
\end{itemize}
In the notation of Theorem~\ref{thm:equiv_GH}, and with $\kappa$ as above, let $\mathscr{F}$ be the family of all $(1, \kappa)$-near-geodesics in $\Omega$. Clearly, $\mathscr{F}$ is $\kappa$-admissible. Finally, let $\triangle_n$ denote the $\kappa$-triangle whose sides are $\rho_n$, $\sigma_n$ and $\upsilon_n$. Let us write $\delta^* := (3\delta + 6\kappa)$. By part~\eqref{item:infer_slim} of Theorem~\ref{thm:equiv_GH} we get that $\Omega$ satisfies the $(\mathscr{F}, \kappa, \delta^*)$-Rips condition. In particular,
$$
  \sigma_n \subset \bigcup\nolimits_{\Sigma \in \triangle_n\setminus \{\sigma_n\}}
  \{z \in \Omega : K_{\Omega}(z, \Sigma) \leq \delta^*\}
$$
for each $n \geq 1$. Since $\sigma_n([0, T_n])$ is connected, we can find a point
$$
b_n \in \{z \in \Omega : K_{\Omega}(z, \rho_n) \leq \delta^*\} \cap 
\{z \in \Omega : K_{\Omega}(z, \upsilon_n) \leq \delta^*\}
$$
for each $n \geq 1$. Next, fix $a_n \in \rho_n$ and $c_n \in \upsilon_n$ such that
$$
K_{\Omega}(a_n, b_n), K_{\Omega}(c_n, b_n) \leq \delta^*
$$
for each $n \geq 1$. Clearly,
$$
K_{\Omega}(o, a_n) \geq K_{\Omega}(o, \sigma_n) - \delta^*, \quad
K_{\Omega}(o, c_n) \geq K_{\Omega}(o, \sigma_n) - \delta^*,
$$
for each $n \geq 1$. From the above and the fact that $\rho_n$ and $\upsilon_n$ are geodesics,
\begin{align*}
K_{\Omega}(a_n, z_n) &\leq K_{\Omega}(o, z_n) - K_{\Omega}(o, \sigma_n) + \delta^*, \\
K_{\Omega}(c_n, w_n) &\leq K_{\Omega}(o, w_n) - K_{\Omega}(o, \sigma_n) + \delta^*,
\end{align*}
for each $n \geq 1$. By the triangle inequality and the last two inequalities, we get:
\begin{align*}
K_{\Omega}(z_n, b_n) &\leq K_{\Omega}(o, z_n) - K_{\Omega}(o, \sigma_n) + 2\delta^*, \\
K_{\Omega}(w_n, b_n) &\leq K_{\Omega}(o, w_n) - K_{\Omega}(o, \sigma_n) + 2\delta^*,
\end{align*}
for each $n \geq 1$. From the last two inequalities, it follows that
$$
2K_{\Omega}(o, \sigma_n) \leq 2\Gp{z_n}{w_n}{o} + 4\delta^*
$$
for each $n \geq 1$. From this and \eqref{eq:fin_prod_lim} we have
$$
\limsup_{n\to \infty}K_{\Omega}(o, \sigma_n) \leq \limsup_{n\to \infty}\Gp{z_n}{w_n}{o} + 2\delta^*< +\infty,
$$
which contradicts the fact that $\lim_{n\to \infty}K_{\Omega}(o, \sigma_n) = +\infty$. Hence, our assumption must be false, which establishes the Claim~II. \hfill $\qed$

\subsection{An application of Theorem~\ref{thm:Gromov_visi}}
\begin{proof}[The proof of Theorem~\ref{thm:Gromov_visi_bdy}]
Suppose the conclusion of the theorem is false. Then there exists a germ of a complex $1$-dimensional variety, say $V$, in $\partial\Omega$. Let $p$ be a regular point of $V$. By the condition on $\partial\Omega$, it is easy to see that there exists a neighborhood $\Uc$ of $p$, a unit vector $\nu$, and a constant $\epsilon > 0$ such that
\begin{itemize}
  \item $\Uc \cap V$ is the image of an injective holomorphic map $\varphi: \Db \to \Cb^d$,
  \item $(\Uc \cap V)+t\nu \subset \Omega$ for every $t \in (0,\epsilon)$.
\end{itemize}
Write $\xi := \varphi(1/2)$ and $\eta := \varphi(-1/2)$; by injectivity of $\varphi$, $\xi\neq \eta$. Next, write
$$
  \varphi_{n} := \varphi + \left(\tfrac{\epsilon}{n+1}\right)\nu,
$$
and set $z_n := \varphi_n(1/2)$, $w_n := \varphi_n(-1/2)$ for each $n\in \Nb$. Finally, let $\gamma : ([0, T], 0, T)\to (\Db, -1/2, 1/2)$ denote the geodesic with respect to the Poincar{\'e} distance on $\Db$ from $-1/2$ to $1/2$ which lies in $(-1, 1)$, and define the path $\sigma_n := \varphi_n\circ \gamma$ for each $n$. By construction, $\varphi_n(\Db)\subset \Omega$ for each $n$. Recall that $\gamma$ is the restriction of a diffeomorphic embedding of $\Rb$ into $\Db$. As the Poincar{\'e} metric on $\Db$ equals $k_{\Db}$, we have by definition $k_{\Db}(\gamma(t), \gamma^\prime(t)) = 1$ for all $t\in [0, T]$. Thus, by the holomorphicity of $\varphi$, we get for each $n \geq 1$:
\begin{align}
  k_{\Omega}\big(\sigma_n(t), \sigma^\prime_n(t)\big)
  &= k_{\Omega}\big(\varphi_n(\gamma(t)), \varphi^\prime_n(t)\gamma^\prime(t)\big) \notag \\
  & \leq k_{\Db}\big(\gamma(t), \gamma^\prime(t)\big) = 1 \quad \forall t \in [0, T].
  \label{eqn:deriv_1}
\end{align}

Next, observe that for each $n \geq 1$:
$$
  |s-t| - T \leq 0 \leq K_{\Omega}\big(\sigma_n(s), \sigma_n(t)\big)
  \leq K_{\Db}(\gamma(s), \gamma(t)\big) = |s-t|
$$
for every $s, t\in [0,T]$. From the latter inequality and \eqref{eqn:deriv_1}, we conclude that, for each $n$, $\sigma_n$ is a $(1, T)$-almost-geodesic joining $z_n$ to $w_n$. By construction, we can find $\overline{\Omega}^{End}$-open neighborhoods $V_{\xi}$ and $V_{\eta}$ of $\xi$ and $\eta$, respectively, such that
$$
 \clos{V_{\xi}} \cap \clos{V_{\eta}}\,=\,\overline{V}_{\xi} \cap \overline{V}_{\eta} = \emptyset,
$$
and such that $z_n \in V_{\xi}$ and $w_n \in V_{\eta}$ for all sufficiently large $n$. By construction, given a compact $K \subset \Omega$, there exists an integer $N_K \gg 1$ such that $\sigma_n \cap K = \emptyset$ for every $n \geq N_K$. Since each $\sigma_n$ is a $(1, T)$-almost-geodesic, we conclude that $\Omega$ is not a weak visibility domain. But as $(\Omega, K_{\Omega})$ is Gromov hyperbolic and ${\sf id}_{\Omega}$ extends to a homeomorphism from $(\Omega \cup \partial_{G}\Omega)$ onto $\overline{\Omega}^{End}$, the last statement contradicts the conclusion of Theorem~\ref{thm:Gromov_visi}. Therefore, our assumption must be false, whence $\partial\Omega$ does not contain any germs of complex varieties of positive dimension.
\end{proof}

\section{A Wolff-Denjoy theorem: Proof of Theorem~\ref{thm:WD_visible}}
The proof of Theorem~\ref{thm:WD_visible} resembles the proof of Theorem~1.8 in~\cite{BM2021}, which is very similar to the proof of Theorem~1.10 in~\cite{BZ2017}. The present argument requires two preliminary results which are analogous to Proposition 4.1 and Theorem 4.3 in~\cite{BM2021}, but whose proofs are slightly different due to the fact that we must consider unbounded domains as well.
 
Given two complex manifolds $X$ and $Y$, let ${\rm Hol}(X,Y)$ denote the space of holomorphic maps from $X$ to $Y$. A sequence $( f_n)_{n \geq 1}$ in ${\rm Hol}(X,Y)$ is called \emph{compactly divergent} if for every pair of compact subsets $K_1 \subset X$ and $K_2 \subset Y$, the intersection $f_n(K_1) \cap K_2$ is empty for all sufficiently large $n$. 
 
 \begin{lemma}[analogous to Theorem 4.3 in~\cite{BM2021}]\label{lem: analogue of Theorem 4.3} Let $\Omega \subset \Cb^d$ be a weak visibility domain and $X$ be a connected complex manifold. If $( f_n)_{n \geq 1}$ is a compactly divergent sequence in ${\rm Hol}(X,\Omega)$, then there exist $\xi \in \partial \overline{\Omega}^{End}$ and a subsequence $(f_{n_j})_{j \geq 1}$ such that 
 $$
 \lim_{j \rightarrow \infty} f_{n_j}(z) = \xi
 $$
 for all $z \in X$.  
 \end{lemma}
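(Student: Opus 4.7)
The plan is to argue by contradiction, combining the distance-decreasing property of holomorphic maps with the weak visibility of $\Omega$. I would first fix a base point $z_0 \in X$; since $(f_n)$ is compactly divergent, $(f_n(z_0))$ eventually leaves every compact subset of $\Omega$, so compactness of $\overline{\Omega}^{End}$ yields a subsequence (still written $(f_n)$) with $f_n(z_0) \to \xi$ for some $\xi \in \partial\overline{\Omega}^{End}$. The goal is then to show, along this subsequence, that $f_n(z) \to \xi$ for every $z \in X$. Fixing such a $z$, the connectedness of the complex manifold $X$ supplies a piecewise smooth path joining $z_0$ to $z$, so $L := K_X(z_0, z) < \infty$; distance decreasing then gives $K_\Omega(f_n(z_0), f_n(z)) \leq L$ for every $n$, and any limit point of $(f_n(z))$ in $\overline{\Omega}^{End}$ lies in $\partial\overline{\Omega}^{End}$ by compact divergence.

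Next I would argue by contradiction, supposing that some subsequence $(f_{n_k}(z))$ converges to $\eta \in \partial\overline{\Omega}^{End}$ with $\eta \neq \xi$. Pick $\overline{\Omega}^{End}$-open neighborhoods $V_\xi$ and $V_\eta$ of $\xi$ and $\eta$ with disjoint closures, so that for large $k$, $f_{n_k}(z_0) \in V_\xi$ and $f_{n_k}(z) \in V_\eta$. By Proposition~\ref{prop:almost_geod_exist}, for each such $k$ I can construct a $(1,1)$-almost-geodesic $\sigma_k \colon [0, T_k] \to \Omega$ from $f_{n_k}(z_0)$ to $f_{n_k}(z)$, and the near-geodesic inequality forces $T_k \leq L + 1$. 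Weak visibility then produces a compact $K \subset \Omega$ such that $\sigma_k([0, T_k]) \cap K \neq \emptyset$ for all large $k$; after extraction we obtain $t_k \in [0, T_k]$ with $\sigma_k(t_k) \to p \in K \subset \Omega$, and the $k_\Omega$-speed bound on $\sigma_k$ gives $K_\Omega(f_{n_k}(z_0), \sigma_k(t_k)) \leq L + 1$ and similarly for $f_{n_k}(z)$.

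The main obstacle\,---\,the step that requires genuine care beyond the bounded setting of \cite{BM2021}\,---\,is converting these uniformly bounded Kobayashi distances into a contradiction with the compact-divergence hypothesis, since without Cauchy completeness of $(\Omega, K_\Omega)$ closed Kobayashi balls are not automatically relatively compact in $\Omega$. The strategy I would pursue is to exploit the local Euclidean comparison provided by Result~\ref{res:k-hyper} (equivalently Lemma~\ref{lem:euclidean_estimates}): on each compact subset $Q \Subset \Omega$ one has $k_\Omega(w; v) \geq c_Q \norm{v}$, so the bounded $k_\Omega$-length budget of $\sigma_k$ controls the Euclidean excursions of $\sigma_k$ inside a fixed compact neighborhood of $p$. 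Combining this with $\sigma_k(t_k) \to p \in \Omega$ should trap the entire image $\sigma_k([0, T_k])$ in a common compact subset of $\Omega$ for all large $k$, forcing $f_{n_k}(z_0)$ itself into a compact subset of $\Omega$ and contradicting $f_{n_k}(z_0) \to \xi \in \partial\overline{\Omega}^{End}$.
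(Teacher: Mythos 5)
Your setup (extracting $\xi$ via compactness of $\overline{\Omega}^{End}$, fixing $z$, bounding $K_\Omega(f_{n_k}(z_0),f_{n_k}(z))$ by $L=K_X(z_0,z)$, and applying weak visibility to $(1,1)$-almost-geodesics joining the image points) is fine, but the final step has a genuine gap, and it is exactly the step you flag as the main obstacle. Knowing that $\sigma_k(t_k)\to p\in\Omega$ and that the curves have Kobayashi length budget $\leq L+O(1)$ does \emph{not} trap $\sigma_k([0,T_k])$, nor the endpoint $f_{n_k}(z_0)$, in a compact subset of $\Omega$. The comparison $k_\Omega(w;v)\geq c_Q\norm{v}$ from Result~\ref{res:k-hyper} holds only for $w$ in the fixed compact $Q$, so it controls the curve only for a parameter interval of length about $c_Q\cdot\mathrm{dist}(p,\partial Q)$ around $t_k$; once the curve exits $Q$ (which it can, since $L$ may far exceed that interval), there is no uniform Euclidean lower bound on $k_\Omega$ and the remaining Kobayashi budget can carry it arbitrarily close to $\partial\Omega$ or off to infinity. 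What you actually obtain is only $K_\Omega(f_{n_k}(z_0),p)\leq L+O(1)$, and without Cauchy-completeness (which this lemma does not assume --- indeed the paper proves completeness only under extra hypotheses in Theorem~\ref{thm:Gromov_visi}, and in the Wolff--Denjoy section explicitly allows $\liminf_{z\to\eta}K_\Omega(K,z)$ to be finite) a bounded Kobayashi ball need not be relatively compact in $\Omega$, so this is not in contradiction with $f_{n_k}(z_0)\to\xi\in\partial\overline{\Omega}^{End}$. A second, structural problem is that your almost-geodesics are built inside $\Omega$ by Proposition~\ref{prop:almost_geod_exist} and bear no relation to $f_{n_k}$ of any fixed compact subset of $X$, so the point of $K$ they hit cannot be played against compact divergence.

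The paper's proof avoids both issues by choosing the curve upstairs: fix a smooth $\sigma:[0,T]\to X$ from $z_0$ to $z_1$ with $k_X(\sigma(t);\sigma'(t))\leq 1$, and observe that $\hat\sigma_n:=f_n\circ\sigma$ is a $(1,T)$-almost-geodesic in $\Omega$ (the upper bound $K_\Omega(\hat\sigma_n(s),\hat\sigma_n(t))\leq|s-t|$ comes from distance-decreasing, the lower bound is the trivial $0\geq|s-t|-T$, and the speed bound comes from metric-decreasing). Weak visibility then forces $f_n(\sigma([0,T]))$ to meet a fixed compact $K\subset\Omega$ for all large $n$, which contradicts compact divergence directly with $K_1=\sigma([0,T])$ and $K_2=K$ --- no trapping of endpoints, and no completeness, is needed. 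If you want to salvage your outline, replace your interior almost-geodesics by these pushed-forward curves; the rest of your first two paragraphs then goes through.
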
 
 
 \begin{proof} Fix $z_0 \in X$. Passing to a subsequence and relabeling, if needed, we can suppose that $\xi_0:=\lim_{n \rightarrow \infty} f_n(z_0)$ exists in $\partial \overline{\Omega}^{End}$. Aiming for a contradiction, suppose that there exists $z_1 \in X$ such that $(f_n(z_1))_{n \geq 1}$ does not converge to $\xi_0$. Passing to a further subsequence, we can suppose that $\xi_1:=\lim_{n \rightarrow \infty} f_n(z_1)$ exists and $\xi_0 \neq \xi_1$. 
 
Fix a smooth curve $\sigma : [0,T] \rightarrow X$ joining $z_0$ to $z_1$. Since the Kobayashi metric is upper-semicontinuous, we can assume that $k_X(\sigma(t); \sigma^\prime(t)) \leq 1$ for every $t \in [0, T]$. Write $\hat{\sigma}_n := f_n \circ \sigma$. Then,
$$
k_{\Omega}(\hat{\sigma}_n(t); \hat{\sigma}_n^\prime(t)) \leq k_X(\sigma(t); \sigma^\prime(t)) \leq 1
$$
for every $t \in [0, T]$ and every $n$. Furthermore:
\begin{align*}
K_{\Omega}(\hat{\sigma}_n(s), \hat{\sigma}_n(t)) &\leq K_X(\sigma(s), \sigma(t)) \leq 
\int_s^t\!k_X(\sigma(\tau); \sigma^\prime(\tau))\,d\tau \leq |s-t|, \\
K_{\Omega}(\hat{\sigma}_n(s), \hat{\sigma}_n(t)) &\geq 0 \geq |s-t| - T
\end{align*}
for every $t \in [0, T]$ and every $n$. Thus, each $\hat{\sigma}_n$ is a $(1,T)$-almost geodesic. By the weak visibility property, there exists a compact set $K \subset \Omega$ such that 
$$
\emptyset \neq K \cap \hat{\sigma}_n([0,T]) = K \cap f_n\big(\sigma([0,T])\big)
$$
for every $n$. Since $(f_n)_{n \geq 1}$ is a compactly divergent sequence, we have a contradiction. 
\end{proof} 
 
\begin{lemma}[analogous to Proposition 4.1 in~\cite{BM2021}]\label{lem: analogue of Prop 4.1} Let $\Omega \subset \Cb^d$ be a weak visibility domain, $o \in \Omega$, and $F : \Omega \rightarrow \Omega$ be a holomorphic self-map. If 
$$
 \limsup_{n \rightarrow \infty} K_\Omega(F^n(o), o) = \infty,
$$
then there exists $\xi \in \partial \overline{\Omega}^{End}$ so that 
$$
 \lim_{j \rightarrow \infty} F^{\mu_j}(o) = \xi
$$
for every sequence  $(\mu_j)_{j \geq 1}$ in $\mathbb{N}$ with $\lim_{j \rightarrow \infty} K_\Omega(F^{\mu_j}(o), o) = \infty$. 
\end{lemma}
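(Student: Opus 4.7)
My approach is to first extract a candidate limit $\xi$, and then show by contradiction that every escaping subsequence must converge to it. By the hypothesis $\limsup_n K_\Omega(F^n(o), o) = \infty$, pick an increasing subsequence $(n_k)$ along which $K_\Omega(F^{n_k}(o), o) \to \infty$; compactness of $\overline{\Omega}^{End}$ yields a further subsequence with $F^{n_k}(o) \to \xi \in \overline{\Omega}^{End}$, and $\xi$ must lie in $\partial\overline{\Omega}^{End}$ since $K_\Omega(o, \bcdot)$ is bounded on compact subsets of $\Omega$. For the uniqueness assertion, assume for contradiction that some subsequence $(\mu_j)$ with $K_\Omega(F^{\mu_j}(o), o) \to \infty$ has $F^{\mu_j}(o) \not\to \xi$; a further extraction gives $F^{\mu_j}(o) \to \eta \in \partial\overline{\Omega}^{End}$ with $\eta \neq \xi$. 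After relabeling, I have escaping sequences $F^{m_j}(o) \to \xi$ and $F^{n_j}(o) \to \eta$, and may assume $m_j < n_j$.

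The main tool will be the following Gromov-product-type estimate, to be derived from weak visibility: \emph{whenever $F^{p_j}(o) \to \alpha$ and $F^{q_j}(o) \to \beta$ are escaping with $\alpha \neq \beta$, there is a constant $C = C(\alpha,\beta)$ such that}
\[
K_\Omega\!\left(o,\,F^{|p_j - q_j|}(o)\right) \;\geq\; K_\Omega(o, F^{p_j}(o)) + K_\Omega(o, F^{q_j}(o)) - C.
\]
To prove this, fix disjoint $\overline{\Omega}^{End}$-open neighborhoods $V_\alpha, V_\beta$ with disjoint closures and let $K_0 \subset \Omega$ be the compact set furnished by weak visibility applied with $\kappa = 1$. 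By Proposition~\ref{prop:almost_geod_exist} there is, for each large $j$, a $(1,1)$-almost-geodesic $\sigma_j : [0, T_j] \to \Omega$ joining $F^{p_j}(o)$ to $F^{q_j}(o)$, and weak visibility forces $\sigma_j(t_j) = z_j \in K_0$. Setting $M := \sup_{z \in K_0} K_\Omega(z, o)$, the triangle inequality through $z_j$ combined with the almost-geodesic bounds yields $t_j \geq K_\Omega(F^{p_j}(o), o) - M - 1$ and $T_j - t_j \geq K_\Omega(F^{q_j}(o), o) - M - 1$; adding these, using $T_j \leq K_\Omega(F^{p_j}(o), F^{q_j}(o)) + 1$, and invoking the $K_\Omega$-contraction $K_\Omega(F^{p_j}(o), F^{q_j}(o)) \leq K_\Omega(o, F^{|p_j - q_j|}(o))$ (applied through $F^{\min(p_j, q_j)}$) gives the desired inequality.

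To close, set $\ell_j := n_j - m_j$. The inequality applied to $(m_j, n_j)$ gives $K_\Omega(o, F^{\ell_j}(o)) \to \infty$, so a subsequence yields $F^{\ell_j}(o) \to \zeta \in \partial\overline{\Omega}^{End}$. If $\zeta \neq \eta$, applying the inequality to the pair $(\ell_j, n_j)$ (whose index difference is $|\ell_j - n_j| = m_j$) gives $K_\Omega(o, F^{m_j}(o)) \geq K_\Omega(o, F^{\ell_j}(o)) + K_\Omega(o, F^{n_j}(o)) - C'$; chaining with the earlier bound produces $K_\Omega(o, F^{m_j}(o)) \geq K_\Omega(o, F^{m_j}(o)) + 2 K_\Omega(o, F^{n_j}(o)) - (C + C')$, forcing $K_\Omega(o, F^{n_j}(o))$ to be bounded, which is the required contradiction. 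The delicate case is $\zeta = \eta$, and this is where I expect the main obstacle: a single application of the key inequality does not close the argument. Here I would proceed by an Euclidean-algorithm-style descent, applying the inequality to $(m_j, \ell_j)$ (whose limits $\xi \neq \eta$ are distinct) to introduce a new escape index $|n_j - 2m_j|$ with a strictly larger lower bound on the associated Kobayashi distance, and iterating the reduction on successive remainders. Combined with the subadditivity $K_\Omega(o, F^{a+b}(o)) \leq K_\Omega(o, F^a(o)) + K_\Omega(o, F^b(o))$ and the trivial bound $K_\Omega(o, F^k(o)) \leq k\, K_\Omega(o, F(o))$, this descent terminates at an index no larger than $\gcd(m_j, n_j)$ (or at $0$), at which stage the accumulated lower bound exceeds the linear upper bound and furnishes the contradiction. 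Organizing this descent cleanly—ensuring each iteration produces a strictly smaller index while the lower bound strictly grows—is the step that I expect to require the most care.
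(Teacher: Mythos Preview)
Your derivation of the Gromov-product-type inequality from weak visibility is correct and matches the paper's core estimate. The gap is in the endgame: the ``Euclidean-algorithm-style descent'' does not close. Each step of your descent requires passing to a further subsequence to extract a limit in $\partial\overline{\Omega}^{End}$, and each application of the key inequality contributes an additive constant (depending on the pair of limit points involved). The number of Euclidean-algorithm steps for the pair $(m_j,n_j)$ is not bounded uniformly in $j$, so the accumulated constants can swamp the gain; moreover, the new limits $\theta_k$ arising at each step need not lie in $\{\xi,\eta\}$, so you cannot even guarantee a uniform constant per step. There is no clean termination here, and your final appeal to the linear bound $K_\Omega(o,F^k(o)) \leq k\,K_\Omega(o,F(o))$ does not rescue it.

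The missing idea is to choose the candidate sequence more carefully: take $(\nu_j)$ to be a \emph{record sequence}, i.e.\ an increasing sequence with
\[
\max\bigl\{K_\Omega(F^k(o),o) : 1 \le k < \nu_j\bigr\} < K_\Omega(F^{\nu_j}(o),o),
\]
and let $\xi$ be a subsequential limit of $F^{\nu_j}(o)$. Now, given $(\mu_j)$ with $K_\Omega(F^{\mu_j}(o),o)\to\infty$ and $F^{\mu_j}(o)\to\eta\neq\xi$, pick $\nu_{i_j} > \mu_j$. Your own inequality gives
\[
K_\Omega\!\bigl(o,F^{\nu_{i_j}-\mu_j}(o)\bigr) \ge K_\Omega(o,F^{\nu_{i_j}}(o)) + K_\Omega(o,F^{\mu_j}(o)) - C,
\]
while the record property gives $K_\Omega\!\bigl(o,F^{\nu_{i_j}-\mu_j}(o)\bigr) < K_\Omega(o,F^{\nu_{i_j}}(o))$. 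Combining these yields $K_\Omega(o,F^{\mu_j}(o)) < C$, a contradiction in one step. This is Karlsson's trick, and it is exactly what the paper does.
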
 
 
The proof is similar to the proof of Proposition 4.1 in~\cite{BM2021}, which is based on an argument of Karlsson~\cite{K2001}. 
 
\begin{proof} Fix an increasing sequence $(\nu_j)_{j \geq 1}$ in $\mathbb{N}$ so that 
\begin{equation}
\label{eqn:defining property of nu} 
 \max\{ K_\Omega(F^{k}(o), o) : k=1,\dots, \nu_j-1\} < K_\Omega(F^{\nu_j}(o), o)
\end{equation}
for every $j \geq 1$. Passing to a subsequence and relabeling, if needed, we can suppose that $\xi : =  \lim_{j \rightarrow \infty} F^{\nu_j}(o)$ exists in $ \partial \overline{\Omega}^{End}$. 
 
Fix a sequence $(\mu_j)_{j \geq 1}$ in $\mathbb{N}$ with $\lim_{j \rightarrow \infty} K_\Omega(F^{\mu_j}(o), o) = \infty$. Aiming for a contradiction, suppose that $(F^{\mu_j}(o))_{j \geq 1}$ does not converge to $\xi$. Passing to a subsequence, we can suppose that $F^{\mu_j}(o) \rightarrow \eta$ and $\eta \neq \xi$.  Next fix a subsequence $(\nu_{i_j})_{j \geq 1}$ such that $\mu_j < \nu_{i_j}$ for all $j \in \Nb$. For each $j \in \Nb$, let $\sigma_j:[0,T_j] \rightarrow \Omega$ be a $(1,1)$-almost-geodesic joining $F^{\mu_j}(o)$ to $F^{\nu_{i_j}}(o)$. By the weak visibility property, there exists a sequence $(t_j)_{j \geq 1}$, $t_j \in [0, T_j]$, such that 
$$
R:= \sup_{j \geq 0} K_\Omega(o, \sigma_j(t_j))
$$
is finite.Then, since each $\sigma_j$ is a $(1,1)$-almost-geodesic, we have 
\begin{align*}
K_\Omega( F^{\nu_{i_j}}(o), F^{\mu_j}(o)) \geq T_j - 1 &\geq 
K_\Omega( F^{\nu_{i_j}}(o), \sigma_j(t_j)) + K_\Omega(\sigma_j(t_j), F^{\mu_j}(o))-3  \\
 & \geq  K_\Omega( F^{\nu_{i_j}}(o), o) + K_\Omega(o, F^{\mu_j}(o)) - 3 - 2R.
 \end{align*}
Further, by the distance decreasing property of the Kobayashi distance under holomorphic maps, and by \eqref{eqn:defining property of nu}, 
 \begin{align*}
 K_\Omega( F^{\nu_{i_j}}(o), F^{\mu_j}(o)) \leq  K_\Omega( F^{\nu_{i_j}-\mu_j}(o), o) < K_\Omega(F^{\nu_{i_j}}(o), o). 
 \end{align*}
 Combining the last two equations we have 
 $$
 K_\Omega(o, F^{\mu_j}(o)) < 3+2R
 $$
 which contradicts the assumption that $\lim_{j \rightarrow \infty} K_\Omega(F^{\mu_j}(o), o) = \infty$.
 \end{proof} 

We are now in the position to give a proof of Theorem~\ref{thm:WD_visible}. Since this proof is nearly identical to the proof of Theorem~1.8 in~\cite{BM2021}, we shall be \textbf{brief.} We should mention here that, although the domains considered in \cite[Theorem~1.10]{BZ2017} and \cite[Theorem~1.8]{BM2021} are visibility domains, the weak visibility property is sufficient for these proofs to work.
 
\begin{proof}[Proof of Theorem~\ref{thm:WD_visible}] The proof of Theorem 1.8 in~\cite{BM2021} applies to the present set-up \emph{essentially} verbatim with Lemma~\ref{lem: analogue of Theorem 4.3} replacing any reference to Theorem 4.3 in~\cite{BM2021}, Lemma~\ref{lem: analogue of Prop 4.1} replacing any reference to Proposition 4.1 in~\cite{BM2021}, and the words ``$\Omega$ is a weak visibility domain'' replacing the phrase ``$\Omega$ is a visibility domain''. The only modifications that occur are the following: 
\begin{itemize}
\item We need a definition of the function $G_\delta$ in the case when $\xi$ is an end of $\overline{\Omega}$. In this case, $G_\delta$ should be defined by 
$$
G_\delta(x_1,x_2) = \inf \left\{ K_\Omega( F^m(x_1), x_2) : m \in \Nb\text{ and } F^m(x_1) \in U_\delta\right\}
$$
where $U_\delta$ is the connected component of $\overline{\Omega} \setminus \overline{\Bb_d(0,\delta^{-1})}$ that contains the end $\xi$. 
\item Given a compact subset $K \subset \Omega$ and a point $\eta \in \partial \overline{\Omega}^{End}$, we need to know that the quantity 
$$
\epsilon:=\liminf_{z \rightarrow \eta} \inf_{k \in K} K_\Omega(k,z) 
$$
is contained in $(0,\infty]$. In the unbounded case this requires a slightly different argument than the one given in~\cite{BZ2017} or ~\cite{BM2021}. In our setting we can argue as follows: fix a compact set $K^\prime \subset \Omega$ such that $K \varsubsetneq {\rm int}(K^\prime)$. As $\Omega$ is Kobayashi hyperbolic, the Euclidean topology and the $K_{\Omega}$-topology coincide (see \cite[Section~3.3]{JP1993}, for instance). So, $\Omega \setminus {\rm int}(K^\prime)$ is closed in the $K_{\Omega}$-topology and $K\ni k \mapsto K_{\Omega}(k, \Omega \setminus {\rm int}(K^\prime))$ is a continuous function that is positive at each $k \in K$. As $K$ is compact, 
$$
\epsilon^\prime:= \inf_{k \in K} K_{\Omega}(k, \Omega \setminus {\rm int}(K^\prime)) 
$$
is positive. Then $\epsilon \geq \epsilon^\prime > 0$ and the argument is complete.
\end{itemize}
\end{proof}

\section*{Acknowledgements}    
Bharali is supported by a UGC CAS-II grant (Grant No. F.510/25/CAS-II/2018(SAP-I)). Zimmer was partially supported by grants DMS-2105580 and DMS-2104381 from the National Science Foundation. We are very grateful for the helpful suggestions of the referees for enhancing the clarity of some of our proofs.

\bibliographystyle{alpha}
\bibliography{complex_kob}

\end{document}